\newcommand\cA{\mathcal{A}}
\newcommand\cT{\mathcal{T}}
\newcommand\cV{\mathcal{V}}
\newcommand{\cF}{\mathcal{F}}
\newcommand\bH{\mathbf{H}}
\newcommand{\vhc}{V_h^{k,\mathrm{c}}}
\newcommand\vhnc{V_h^{k,\mathrm{nc}}}
\newcommand\tvhnc{\widetilde{V}_h^{k,\mathrm{nc}}}
\newcommand{\qhc}{Q_h^{\ell,\mathrm{c}}}
\newcommand\qhnc{Q_h^{\ell,\mathrm{nc}}}
\newcommand\nn{\boldsymbol{n}}
\newcommand\bu{\boldsymbol{u}}
\newcommand\ba{\boldsymbol{a}}
\newcommand\bt{\boldsymbol{t}}
\newcommand\bv{\boldsymbol{v}}
\newcommand\bx{\boldsymbol{x}}
\newcommand\be{\boldsymbol{e}}
\newcommand\btau{\boldsymbol{\tau}}
\newcommand\pd{\Pi^{\nabla^{2\!}}}
\newcommand\pg{\Pi^\nabla}
\newcommand\p{\mathbb{P}}
\newcommand\M{\mathbb{M}}
\newcommand\e{\mathcal{E}}
\newcommand\V{\mathcal{V}}
\newcommand\pw{\mathrm{pw}}
\newcommand\cero{\boldsymbol{0}}
\newcommand{\dx}{\,\mbox{d}\bx}
\newcommand{\ds}{\,\mbox{d}s}
\newcommand\bdiv{\mathop{\mathbf{div}}\nolimits}
\newcommand{\mean}[1]{\left\{\kern-1.ex\left\{ #1 \right\}\kern-1.ex\right\}}		
\definecolor{lightblue}{rgb}{0.22,0.45,0.70}
\definecolor{lightgreen}{rgb}{0.22,0.50,0.25}
\def\Xint#1{\mathchoice
	{\XXint\displaystyle\textstyle{#1}}%
	{\XXint\textstyle\scriptstyle{#1}}%
	{\XXint\scriptstyle\scriptscriptstyle{#1}}%
	{\XXint\scriptscriptstyle\scriptscriptstyle{#1}}%
	\!\int}
\def\XXint#1#2#3{{\setbox0=\hbox{$#1{#2#3}{\int}$ }
		\vcenter{\hbox{$#2#3$ }}\kern-.6\wd0}}
\def\dashint{\Xint-}
\newtheorem{theorem}{Theorem}[section]
\newtheorem{lemma}[theorem]{Lemma}
\newtheorem{proposition}{Proposition}[section]
\theoremstyle{definition}
\newtheorem{corollary}{Corollary}[section]
\theoremstyle{remark}
\newtheorem{remark}[theorem]{Remark}
\numberwithin{equation}{section}
\begin{document}

\title[Virtual element methods for poroelastic plate models]{Virtual element methods for Biot--Kirchhoff poroelasticity}


\author[Khot]{Rekha Khot}
\address{School of Mathematics, Monash University, 9 Rainforest Walk, Melbourne, Victoria 3800, Australia.}
\curraddr{Inria 75012, Paris, France.}
\email{rekha.khot@inria.fr}
\thanks{This research was supported by the Monash Mathematics Research Fund S05802-3951284; by the National Agency for Research and Development, ANID-Chile through FONDECYT project 1220881 and by project Centro de Modelamiento Matem\'atico (CMM), FB210005, BASAL funds for centers of excellence;   by the Australian Research Council through the Discovery Project grant DP220103160 and the Future Fellowship grant FT220100496; and by the Ministry of Science and Higher Education of the Russian Federation within the framework of state support for the creation and development of World-Class Research Centres ``Digital biodesign and personalised healthcare'' No. 075-15-2022-304.}

\author[Mora]{David Mora}
\address{GIMNAP, Departamento de Matem\'atica, Universidad del B\'io-B\'io, Concepci\'on, Chile;  and CI$^2$MA, Universidad de Concepci\'on, Concepci\'on, Chile.}
\email{dmora@ubiobio.cl}
\author[Ruiz-Baier]{Ricardo Ruiz-Baier}
\address{School of Mathematics, Monash University, 9 Rainforest Walk, Melbourne, Victoria 3800, Australia; and Sechenov First Moscow State Medical University, Moscow, Russia; and Universidad Adventista de Chile, Casilla 7-D, Chill\'an, Chile.}
\email{ricardo.ruizbaier@monash.edu}
  
\subjclass[2010]{Primary 65N30, 65N12, 65N15}

\date{\today}


\begin{abstract}
This paper analyses conforming and nonconforming virtual element formulations of arbitrary polynomial degrees on general polygonal meshes for the coupling of solid and fluid phases in deformable porous plates. The governing equations consist of one fourth-order equation for the transverse displacement of the middle surface coupled with a second-order equation for the pressure head relative to the solid with mixed boundary conditions. We propose novel  enrichment operators that connect nonconforming virtual element spaces of general degree to continuous Sobolev spaces. These operators satisfy additional orthogonal and best-approximation properties (referred to as conforming companion operators in the context of finite element methods), which play an important role in the nonconforming methods.  This paper proves a priori error estimates in the best-approximation form, and derives residual--based reliable and efficient  a posteriori error estimates in appropriate norms, and shows that these  error bounds are robust with respect to the main model parameters. The computational examples illustrate the numerical behaviour of the suggested virtual element discretisations and confirm the theoretical findings on different polygonal meshes with mixed boundary conditions.
\end{abstract}

\maketitle

\section{Introduction}  
\noindent\textbf{Scope.} Fluid-saturated porous media that deform are an essential ingredient in many engineering, biophysical and environmental applications.
From these materials, a family featuring interesting properties is compressible thin plates. Porosity and permeability characteristics through the thickness can be averaged, leading to a different scaling
of poromechanical properties from the typical structure exhibited in Biot's consolidation systems (see, for example \cite[Chapter 8]{cederbaum00}). 

A number of works have addressed the rigorous derivation of poroelastic plate effective equations \cite{marciniak15,mikelic19,serpilli19,taber92,taber96}. The well-posedness analysis has been conducted, for a slightly different model, in the recent paper \cite{gurvich22}. Regarding numerical methods, a discontinuous Galerkin formulation has been proposed in \cite{iliev16} (following \cite{suli07}) and splitting algorithms have been analysed. High-order finite element methods have been used for layer-wise poroelastic shells in \cite{gfrerer20}.

The virtual element method (VEM) has gained popularity in recent years due to its ability to handle complex geometries and provide high accuracy numerical solutions for partial differential equations (PDEs). Another important feature of VEM is the possibility of easily implementing highly regular discrete spaces. This idea is initiated in \cite{brezzi13}, where spaces of high global regularity (such as $C^1$, $C^2$ or more) are easily built in a very efficient way. This has been applied and tested in some biharmonic models of thin plates. The literature contains error analysis of VEM for biharmonic problems for thin plates models (Kirchhoff plates), with a particular emphasis on conforming and nonconforming approximations, including eigenvalue problems \cite{MR4512466,antonietti18,brezzi13,carstensen22b,chinosi16,MR3875292,mora20}. Other VE  discretisations for biharmonic problems in plate models provide a detailed error analysis of the particular type of method and demonstrate its effectiveness through numerical experiments, and the analysis also includes a posteriori error estimates, the time dependent case, and the extension to 3D, among others. See for example \cite{AMNJSC2022,beirao20,MR3854054,chen2022,zhao18,zhao23}.

Since nonconforming spaces might not be a subset of the continuous space, the enrichment (averaging) operator introduced 
in \cite{brenner93} is adapted to prove a priori error estimates utilising a posteriori error bounds in \cite{gudi10}
(known as medius analysis).  These averaging operators essentially assist in connecting the nonconforming discrete space to the continuous space with the required approximation properties. Such operators are then modified in \cite{carstensen2013computational} (referred to as conforming companion operators) satisfying, in addition, orthogonality properties and best approximation estimates. A similar strategy can be carried out in VEM. Indeed, enrichment operators for VEM were recently proposed in \cite{huang21}, and companion operators for VEM were analysed in \cite{carstensen22,carstensen22b}.  Since VE functions are not known explicitly, the computable  or non-computable enrichment/companion operators acting on VE functions can be constructed depending on the specific purpose.  In particular, computable conforming companion operators  can be exploited to define the discrete problem and allowing rough sources \cite{carstensen22b}. In turn, the non-computable ones can be used for purposes of the analysis  (not necessarily knowing the structure explicitly).  So far, the design of such computable companion operators requires a shape-regular sub-triangulation of the polygonal decomposition together with  lowest-order finite element spaces such as Crouzeix--Raviart for second-order and Morley for fourth--order problems. One of the primal advantages of \textit{computable} companion maps is that they permit to approximate rough sources in $H^{-2}(\Omega)$. In this paper we consider sources in $L^2(\Omega)$, and hence we only need to present the construction of non-computable companion operators. This is enough for the required analysis and most importantly, we can avoid to go down on the sub-triangular mesh as done in the construction of computable companion operators.  We stress that the proposed enrichment operator  maps nonconforming VE spaces to conforming VE spaces of one degree higher (which is different from the construction in \cite{huang21}), and in addition, it satisfies $H^2$-orthogonality and best approximation estimates.  We then modify this enrichment operator through a variety of bubble-functions to design  new companion operators having a lower-order orthogonality property (in $H^1$ and $L^2$).  The treatment of general boundary conditions is carefully addressed in this paper, for which  the definition and thorough analysis of the new companion operators is key to establish well-posedness and to obtain error estimates.

This paper presents an extension of nonconforming VE formulations for the coupling of biharmonic problems and second-order elliptic equations (see the similar methods introduced for each of the component sub-problems of second-order linear elliptic and fourth-order elliptic in the recent contributions \cite{carstensen22,carstensen22b}, respectively). The model encodes the interaction with a fluid phase, and the study of this type of problems has gained significant attention due to its relevance in various physical applications. More generally, the proposed framework offers a unified approach to solve coupled problems with mixed boundary conditions on polygonal domains, even when they are non-convex. For conforming cases, we combine $C^1-C^0$ types of VEMs with various polynomial degrees. The error estimates, measured in the weighted ${H}^2\times{H}^1$ energy norm (for deflection and moment pressure), demonstrate robustness with respect to material parameters. Additionally, we introduce a reliable and efficient a posteriori error estimator of residual type. Leveraging the flexibility of VEMs in utilising polygonal meshes, we employ the error estimator to drive an adaptive scheme. Notably, the proposed a posteriori analysis is novel for high-order nonconforming VEMs and can be applied to tackle more complex coupled problems: we emphasise that the  models presented in this work can serve as a fundamental building block for establishing a comprehensive framework for complex mixed-dimensional poroelastic models. These models can also be extended to incorporate interaction with multi-layered structures, such as thermostats and micro-actuators, offering broad applicability and versatility.

The main contributions of this work can be summarised as follows:
\begin{itemize}
\item Application of the proposed conforming and nonconforming VEMs to the plate Biot equations.
\item Design of new companion operators with the orthogonal properties and the best-approximation estimates.
\item A priori error estimates  in the energy norm for both conforming and nonconforming formulations in the best-approximation form that remain robust with respect to material parameters.
\item The detailed proofs of the inverse estimate and the norm equivalence for the nonconforming VE functions.
\item Introduction and analysis of a residual-based a posteriori error estimator.
\item Presentation of numerical results validating the theoretical estimates and demonstrating the competitive performance of the proposed schemes.
\end{itemize}
\medskip \noindent\textbf{Content and structure.} The remainder of the paper has been organised in the following manner. In the rest of this section we provide preliminary notational conventions and definitions to be used throughout the paper. Section~\ref{sec:model} contains the model description and defines the weak formulation of the governing equations. The  local and global VE spaces, the degrees of freedom and the computable polynomial projection operators are addressed in Subsection~\ref{subsec:3.1}, and the derivations for both conforming and nonconforming approximations and  the analysis of existence and uniqueness of discrete solution are conducted in Subsection~\ref{subsec:3.2}. The a priori error analysis for the conforming VE methods in the best-approximation form is carried out in Section~\ref{sec:error-c}. For the nonconforming case,  the companion operators are defined in Subsection~\ref{subsec:companion} along with the proofs of the properties and the best approximation estimates followed by the a priori error estimates in Subsection~\ref{subsec:5.2}. Subsections~\ref{subsec:pre} recalls the preliminary estimates and \ref{subsec:std_est} contains the detailed proofs of  standard estimates such as the inverse estimate and the norm  equivalence for the nonconforming VE functions, and a Poincar\'e-type inequality for $H^2$ functions.  The reliability and efficiency of an a posteriori error estimator are included in Subsection~\ref{subsec:rel}-\ref{subsec:eff}. Finally, a collection of illustrative numerical tests is presented in Section~\ref{sec:results}. 
	
\medskip \noindent\textbf{Recurrent notation and domain configuration.} 
Consider an open, bounded, connected Lipschitz domain of $\mathbb{R}^3$, denoted $\widehat{\Omega}=\Omega \times (-\zeta,\zeta) \subset \mathbb{R}^3$ and occupied by an undeformed thin poroelastic plate (a deformable solid matrix or an array of solid particles) of characteristic thickness $2\zeta$, and where $\Omega\subset \mathbb{R}^2$ represents the mid-surface of the undeformed poroelastic plate. The plate is assumed to be isotropic in the plate plane and to follow the Kirchhoff law. In particular, it is assumed that the plate fibers remain orthogonal to the deflected mid-surface \cite{ciarlet22}. An appropriate modification of Biot constitutive poroelasticity equations is adopted in combination with Darcy flow in deforming pores (see \cite{li97}). Following the model presented in \cite{iliev16}, we assume that the equations governing the balance of momentum and mass of the solid and fluid phases can be written in terms of the averaged-through-thickness deflection $u$ (vertical displacement of the solid phase) and the first moment of the pressure of the fluid phase $p$. We will denote by $\nn$ the unit normal vector on the undeformed boundary $\partial\Omega$. The boundary $\partial\Omega$  is disjointly split between a closed set $\Gamma^{c}$ and an open set $\Gamma^{s}$ where we impose, respectively, homogeneous deflections and homogeneous normal derivatives of deflections and of pressure moment (clamped sub-boundary with zero-flux) and homogeneous pressures with normal deflections and bending moments (simply supported sub-boundary). 

For a subdomain $S\subseteq \Omega$ we will adopt the notation $(\cdot,\cdot)_{m,S}$ for the inner product, and  $\|\cdot\|_{m,S}$ (resp. $|\cdot|_{m,S}$) for the norm (resp. seminorm) in the Sobolev space $H^m(S)$ (or in its vector counterpart $\bH^m(S)$) with $m\ge 0$. We sometimes drop $0$ from the subscript in $L^2$ inner product and norms  for convenience. In view of the boundary conditions mentioned above, we also define the Sobolev spaces $H^2_{\Gamma^c}(\Omega):=\{v\in H^2(\Omega): v=\partial_{\nn}v=0\;\text{on}\;\Gamma^c\}$, where $\partial_{\nn}v$ denotes the normal derivative of $v$, and  $H^1_{\Gamma^s}(\Omega):=\{q\in H^1(\Omega): q=0\;\text{on}\;\Gamma^s\}$.
Also, given an integer $k\ge1$ and $S\subset \mathbb{R}^d$, $d=1,2$, by $\mathbb{P}_k(S)$ we will denote the
space of polynomial functions defined locally in $S$ and being of total degree up to $k$. Given a barycentre $x_{S}$ and diameter $h_{S}$ of a  domain $S$, we define the set of scaled monomials $\M_k(S)$ of total degree up to $k$ and $\M_k^*(S)$ of degree equal to $k$ by
$$\M_k(S)= \Big\lbrace \Big(\frac{x-x_{S}}{h_{S}} \Big)^\ell: |\ell| \le k \Big\rbrace, \text{and} \M^*_k(S)= \Big\lbrace \Big(\frac{x-x_{S}}{h_{S}} \Big)^\ell: |\ell| = k \Big\rbrace.$$

Throughout the paper we use $C$ to denote a generic positive constant independent of the mesh size  $h$ and of the main model parameters, that might take different values at its different occurrences. Moreover, given any positive expressions $X$ and $Y$, the notation $X \,\lesssim\, Y$  means that $X \,\le\, C\, Y$ (similarly for $X\,\gtrsim\,Y$).
\section{Plate Biot equations and solvability analysis}\label{sec:model}
The two-dimensional poroelasticity problem arising when a fluid flows through a deformable porous plate of thickness $d$ can be written in the following form 
\begin{subequations}\label{org-problem}
\begin{align}
\hat{\rho} \frac{\partial^2u}{\partial t^2} + \text{div}\bdiv \mathbb{A}\nabla^2 u + \alpha b\Delta p & = f & \text{in $\Omega\times(0,T]$},\\
\bigg(c_0+\frac{\alpha^2}{\lambda+2\mu}\bigg) \frac{\partial p}{\partial t} - \alpha b \frac{d^3}{12} \frac{\partial(\Delta u)}{\partial t} - \frac{\kappa}{\eta} \Delta p & = g & \text{in $\Omega\times(0,T]$},\\
u = \partial_{\nn}u = \partial_{\nn}p & = 0 & \text{on $\Gamma^c\times(0,T]$},\\
\label{eq:bc2}
u = \partial_{\nn\nn}u = p &= 0 & \text{on $\Gamma^s\times(0,T]$},
\end{align} 
\end{subequations}
with appropriate initial conditions and where $\hat{\rho}$ is the reduced  density (taking into account a volume-to-surface scaling), $\mathbb{A}:= D((1-\nu)\mathbb{I}+\nu \mathbb{I}\otimes\mathbb{I})$
for the flexural rigidity $D:=\frac{Ed^3}{12(1-\nu^2)}=\frac{\mu(\lambda+\mu)d^3}{3(\lambda+2\mu)}$ of the plate for the Young modulus $E$, the Poisson ratio $\nu\in(0,0.5)$, the Lam\'e parameters $\lambda,\mu$ and $\partial_{\nn\nn}u:=(\nabla^2 u)\nn\cdot\nn$. Also  $\alpha$ is the Biot--Willis poroelastic coefficient,  $b=2\mu(\lambda+2\mu)^{-1}$, $c_0$ is the total storage capacity,  $\kappa$ is the absolute permeability, and $\eta$ is the viscosity of the fluid. 
We concentrate our attention in the following modification of this Biot--Kirchhoff system in the case of a specific adimensionalisation proposed in \cite[eq. (7)-(9)]{iliev16}:  
\begin{subequations}\label{problem}
\begin{align}\label{eq:momentum}
\frac{\partial^2u}{\partial t^2} + \Delta^2 u + \alpha\Delta p & = f & \text{in $\Omega\times(0,T]$},\\ 
\label{eq:mass}
\beta \frac{\partial p}{\partial t} - \alpha \frac{\partial(\Delta u)}{\partial t} - \gamma \Delta p & = g & \text{in $\Omega\times(0,T]$},
\end{align}\end{subequations}
 where with an abuse of notation we have kept the same notation for the dimensionless problem. Here $f \in {L}^2(0,T;\Omega)$ is the normal vertical loading and $g \in {L}^2(0,T;\Omega)$ is a prescribed mass source/sink. Henceforth, we treat $\alpha, \beta,\gamma$ as  the main model parameters, where $\alpha\leq 1 \leq \gamma$ and \[ \beta = \bigl(c_0[\lambda + 2\mu] + \alpha^2\bigr)\gamma, \quad \gamma = \frac{\lambda+\mu}{\mu}.\] Note that the reduced density $\hat{\rho}$ and the rigidity $D$ of the plate are absorbed in the load $f$ and the model parameter $\alpha$. In plate problems, the Poisson ratio $\nu$ tending to $0.5$ is one of the interesting  limiting phenomena and that, in turn, leads to the case of the first Lame parameter $\lambda\to \infty$. In our new normalised problem, it essentially implies that the model parameters $\beta,\gamma$ will go to infinity. The aim in this paper is to analyse the normalised problem \eqref{problem} and show that its robust with respect to the model parameters $\alpha,\beta,\gamma$.

 System \eqref{problem} is similar to the non-inertial problem in \cite{iliev16} which accommodates fluid-saturated plates where diffusion is possible in the in-plane direction (see also the set of problems recently analysed in \cite{gurvich22}), here extended to the case of mixed boundary conditions. 
In order to fix ideas, we will focus first on a simplified system, resulting from applying a centred and backward Euler semi-discretisation in time to \eqref{eq:momentum}-\eqref{eq:mass}, with a conveniently rescaled final time $T$ and rescaled time step to $\Delta t=1$. Define  the displacement and pressure space
	\[V:= H^2_{\Gamma^c}(\Omega)\cap H^1_{0}(\Omega),\quad Q:= H^1_{\Gamma^s}(\Omega).\]
Owing to the   specification of boundary conditions (taken homogeneous for sake of simplicity of the presentation),  a weak formulation is obtained, which reads:
Find $(u, p)\in V\times Q$ such that \begin{subequations} \label{eq:weak}
	\begin{align}
		(u,v)_{\Omega} +  (\nabla^2 u,\nabla^2 v)_{\Omega} - \alpha(\nabla p, \nabla v)_{\Omega}  
		&= (\tilde f,v)_{\Omega}
		\qquad \forall\; v  \in V, \label{eq:weak:a}\\
		\beta(p,q)_{\Omega} + \alpha (\nabla q, \nabla u)_{\Omega} + \gamma (\nabla p, \nabla q)_{\Omega} 
		&= (\tilde g,q)_{\Omega} \qquad \forall\; q  \in Q,\label{eq:weak:b}
	\end{align}
\end{subequations}
with  $\nabla^2v : = \begin{pmatrix}
v_{xx} & v_{xy}\\v_{yx}&v_{yy}
\end{pmatrix}$ being the Hessian matrix (of second-order derivatives) for a given $v\in H^2(\Omega)$. The right-hand side terms also include the value of deflection and pressure moments in the previous backward Euler time steps, denoted as $\widehat{u}^n,\widehat{u}^{n-1}$ and $\widehat{p}^n$, respectively:
\[ \tilde f = f  + 2 \widehat{u}^n - \widehat{u}^{n-1}, \qquad \tilde g = g + \widehat{p}^n,\]
where the index $n\geq 0$ indicates the time step.

The product space $\bH_\epsilon$ 
contains all $\vec \bu\in [H^2_{\Gamma^c}(\Omega)\cap H^1_{0}(\Omega)] \times H^1_{\Gamma^s}(\Omega)$
which are bounded in the norm   
\begin{equation}\label{eq:product-norm}
	\| \vec{\bu} \|_{\bH_\epsilon}^2 : = 
	\| u \|_{\Omega}^2+| u |_{2,\Omega}^2
	+ \beta \|{p} \|_{\Omega}^2
	+ \gamma| {p} |_{1,\Omega}^2.
\end{equation}
The subscript $\epsilon$ denotes the weighting parameters  (in our case, $\beta,\gamma$). Let us now group the trial and test fields as $\vec{\bu}= (u,p)$ and $\vec{\bv} = (v,q)$, respectively; and introduce the operator  $\cA: \bH_\epsilon \to \bH_\epsilon$ defined as 
\begin{align*}
\langle \cA(\vec{\bu}),\vec{\bv}  \rangle&:=
(u,v)_{\Omega} +  (\nabla^2u ,\nabla^2 v)_{\Omega} - \alpha(\nabla p, \nabla v)_{\Omega}  +  \beta(p,q)_{\Omega} + \alpha (\nabla q, \nabla u)_{\Omega} \\&\quad+ \gamma (\nabla p, \nabla q)_{\Omega},
\end{align*}
where $\langle \cdot,\cdot\rangle$ denotes the duality pairing between $\bH_\epsilon$ and $\bH_\epsilon'$.  Note that $|u|_{2,\Omega}$ defines a norm on $V$, which is equivalent to $H^2$-norm \cite[pp.~34]{ciarlet02}. In particular, this implies for any $v\in V$ that
\begin{align}
\|v\|_{2,\Omega}\lesssim |v|_{2,\Omega}.\label{norm-equiv}
\end{align}
We also define the linear and bounded operator $\cF:\bH_\epsilon \to \mathbb{R}$ as
\[
\vec{\bv} \mapsto \cF(\vec{\bv})  :=  (\tilde f,v)_{\Omega} +  (\tilde g,q)_{\Omega},
\]
and therefore Problem \eqref{eq:weak} is recast as: Find $\vec{\bu} \in \bH_\epsilon$ such that 
\begin{equation}\label{eq:operator}
\langle \cA(\vec{\bu}),\vec{\bv} \rangle = \cF(\vec{\bv}) \qquad \forall\; \vec{\bv} \in \bH_\epsilon.
\end{equation}

We are now in a position to state the solvability of the continuous problem~\eqref{eq:operator}.

\begin{theorem}\label{theo:iso}
Problem \eqref{eq:operator} is well-posed in the space $\bH_\epsilon$ equipped with the norm \eqref{eq:product-norm}. 
\end{theorem}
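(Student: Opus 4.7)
The plan is to verify the hypotheses of the Lax--Milgram lemma for the operator equation \eqref{eq:operator} on the Hilbert space $\bH_\epsilon$ endowed with the norm \eqref{eq:product-norm}, and to trace carefully how the model parameters $\alpha,\beta,\gamma$ enter the bounds.

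First I would establish that $\bH_\epsilon$ is indeed a Hilbert space: the norm \eqref{eq:product-norm} is equivalent to the natural product norm on $[H^2_{\Gamma^c}(\Omega)\cap H^1_0(\Omega)]\times H^1_{\Gamma^s}(\Omega)$ thanks to the Poincar\'e/Friedrichs inequality \eqref{norm-equiv} on $V$ together with the boundary conditions on $Q$. The linear functional $\cF$ is bounded because $\tilde f,\tilde g\in L^2(\Omega)$ and Cauchy--Schwarz gives $|\cF(\vec\bv)|\le \|\tilde f\|_{\Omega}\|v\|_{\Omega}+\|\tilde g\|_{\Omega}\|q\|_{\Omega}\lesssim (\|\tilde f\|_{\Omega}+\beta^{-1/2}\|\tilde g\|_{\Omega})\|\vec\bv\|_{\bH_\epsilon}$.

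The main computation is the boundedness and coercivity of $\cA$. For boundedness I would apply Cauchy--Schwarz termwise. The only delicate contributions are the two coupling terms $\alpha(\nabla p,\nabla v)_\Omega$ and $\alpha(\nabla q,\nabla u)_\Omega$. These I would bound by
\begin{equation*}
\alpha|(\nabla p,\nabla v)_\Omega|\le \alpha|p|_{1,\Omega}|v|_{1,\Omega}\le \tfrac{\alpha}{\sqrt{\gamma}}\bigl(\sqrt{\gamma}|p|_{1,\Omega}\bigr)\|v\|_{2,\Omega},
\end{equation*}
invoking the parameter hypothesis $\alpha\le 1\le \gamma$, so that $\alpha/\sqrt{\gamma}\le 1$, and the norm equivalence \eqref{norm-equiv}. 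This yields a continuity constant for $\cA$ that is \emph{independent} of $\alpha,\beta,\gamma$.

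The decisive observation is coercivity. Thanks to the antisymmetric appearance of the coupling (the $-\alpha$ and $+\alpha$ terms in $\cA$), testing with $\vec\bv=\vec\bu=(u,p)$ produces an exact cancellation
\begin{equation*}
-\alpha(\nabla p,\nabla u)_\Omega+\alpha(\nabla p,\nabla u)_\Omega=0,
\end{equation*}
so that
\begin{equation*}
\langle \cA(\vec\bu),\vec\bu\rangle=\|u\|_\Omega^2+|u|_{2,\Omega}^2+\beta\|p\|_\Omega^2+\gamma|p|_{1,\Omega}^2=\|\vec\bu\|_{\bH_\epsilon}^2.
\end{equation*}
Thus $\cA$ is coercive with constant exactly one, uniformly in $\alpha,\beta,\gamma$. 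An application of the Lax--Milgram lemma then yields a unique solution $\vec\bu\in\bH_\epsilon$ of \eqref{eq:operator} together with the stability estimate $\|\vec\bu\|_{\bH_\epsilon}\lesssim \|\tilde f\|_\Omega+\beta^{-1/2}\|\tilde g\|_\Omega$.

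The only real obstacle is the verification that the bound on the coupling terms is robust with respect to the parameter regime $\alpha\le 1\le \gamma$, $\beta\to\infty$, $\gamma\to\infty$; this is precisely why the weighted norm \eqref{eq:product-norm} is chosen with weights $\beta$ and $\gamma$ on the $L^2$ and $H^1$ contributions of $p$. Coercivity is otherwise essentially free due to the skew-symmetric structure of the coupling, which is the conceptual crux of the argument.
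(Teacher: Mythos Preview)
Your proposal is correct and follows essentially the same approach as the paper: apply Lax--Milgram, obtain boundedness of $\cA$ via Cauchy--Schwarz (with the parameter relation $\alpha\le 1\le\gamma$ absorbing the coupling terms into the weighted norm), and obtain coercivity with constant one from the exact cancellation of the skew-symmetric coupling. The paper's own proof is terser but identical in substance.
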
 
\begin{proof}
It follows from the Lax--Milgram lemma (see, e.g., \cite[Lemma 25.2]{ern04}), requiring the boundedness of $\cA$ over the space $\bH_\epsilon$
\[ \langle \cA(\vec{\bu}),\vec{\bv}\rangle \lesssim  \|\vec{\bu} \|_{\bH_\epsilon} \|\vec{\bv} \|_{\bH_\epsilon}\qquad \forall\;
\vec{\bu},\vec{\bv} \in \bH_\epsilon,\]
and the boundedness of $\cF$, 
as well as the coercivity condition
\[\langle \cA(\vec{\bu}),\vec{\bu} \rangle 
= \|\vec{\bu} \|^2_{\bH_\epsilon} \qquad  
\forall\; \vec{\bu} \in \bH_\epsilon.\]
For the continuity it suffices to apply the Cauchy--Schwarz  inequality while the coercivity is a direct consequence of the definition of the solution operator (whose off-diagonal terms cancel out). 
\end{proof}
Now, we state an additional regularity result for the solution of problem~\eqref{eq:operator}.

\noindent	\textbf{Regularity estimates \cite{grisvard1980boundary}}. Given  $\tilde{f}\in H^{s-4}(\Omega)$ and $\tilde{g}\in H^{r-2}(\Omega)$ with $s\geq 2$ and $r\geq 1$, there exists a unique solution $\vec{\bu}=(u,p)\in (H^{s}(\Omega)\cap V)\times (H^{r}(\Omega)\cap Q)$ to \eqref{eq:operator}  such that
\begin{align}
\|u\|_{s,\Omega}+\|p\|_{r,\Omega}\lesssim \|\tilde{f}\|_{s-4,\Omega}+ \|\tilde{g}\|_{r-2,\Omega}.
\end{align}
\begin{remark}[Simply supported boundary condition]
The boundary condition $u=\partial_{\nn\nn}u=0$ on the simply supported part and an integration by parts 
\[(\Delta^2 u,v)_\Omega = (\nabla^2 u,\nabla^2 v)_\Omega+(\partial_{\nn}(\Delta u),v)_{\partial\Omega}-(\partial_{\nn\nn}u,\partial_{\nn}v)_{\partial\Omega}-(\partial_{\nn\bt}u,\partial_{\bt}v)_{\partial\Omega}\]
for $v\in V$ allow the hessian term $ (\nabla^2u,\nabla^2v)_\Omega=(\Delta^2 u,v)_\Omega$  in the weak form. However to be consistent with plate mechanics,  a zero bending moment $M_{\nn\nn}(u):=\nu\Delta u+(1-\nu)\partial_{\nn\nn}u = 0$ should be prescribed in place of $\partial_{\nn\nn}u=0$ and instead one can consider the integration by parts 
\begin{align*}
\widetilde{a}(u,v)&:=\int_\Omega \nu \Delta u\Delta v +(1-\nu)u_{ij}v_{ij} \\&= \int_\Omega \Delta^2u v+\int_{\partial\Omega}M_{\nn\nn}(u)\partial_{\nn}v-\int_{\partial\Omega}(\partial_{\nn}(\Delta u)v+(1-\nu)\partial_{\nn\bt}u\partial_{\bt}v).
\end{align*}
This replaces the hessian term by the above bilinear form $\widetilde{a}(u,v)$ and the  weak formulation becomes
\begin{align}
(u,v)_{\Omega} +  \widetilde{a}(u,v) - \alpha(\nabla p, \nabla v)_{\Omega}  
&= (\tilde f,v)_{\Omega}\qquad \forall\; v  \in V, \\\beta(p,q)_{\Omega} + \alpha (\nabla q, \nabla u)_{\Omega} + \gamma (\nabla p, \nabla q)_{\Omega} 
&= (\tilde g,q)_{\Omega} \qquad \forall\; q  \in Q.
\end{align}
The boundedness of modified $\cA$ is straightforward and the coercivity follows from the observation $\widetilde{a}(v,v)= (\nu\|\Delta v\|^2_\Omega+(1-\nu)|v|^2_{2,\Omega})\gtrsim |v|^2_{2,\Omega}$ with the equivalence \eqref{norm-equiv} in the last inequality.
\end{remark}
\section{Virtual element formulation and unique solvability of the discrete problem}\label{sec:VEM} 
Let us denote by $\{\cT_{h}\}_{h>0}$ a shape-regular family of partitions of 
$\bar\Omega$, conformed by polygons $K$ of  diameter $h_K$, and we denote the mesh size by 
$h:=\max\{h_K: K\in\cT_{h}\}$. Let $\V=\V^i\cup\V^c\cup\V^s$ and $\e =\e^i\cup\e^c\cup\e^s$ be the set of  interior vertices $\V^i$ and boundary vertices $\V^c\cup\V^s$, and the set of  interior edges $\e^i$ and boundary edges $\e^c\cup\e^s$.  By $N_K$ we will denote the number of vertices/edges in the generic polygon $K$. For all edges $e\in \partial K$, we denote by $\nn_K^e$
the unit normal pointing outwards $K$, $\bt^e_K$ the unit tangent vector along $e$
on $K$, and $V_i$ represents the $i^{th}$ vertex of the polygon $K$. We suppose that there exists a universal positive constant $\rho$ such that
\begin{enumerate}[label=(\bfseries{M\arabic{*}})] \item\label{M1} every polygon $K\in \cT_h$ of diameter $h_K$ is star-shaped with respect to every point of a ball of radius greater than or equal to $\rho h_K$,
\item\label{M2} every edge $e$ of $K$ has a length $h_e$ greater than or equal to $\rho h_K$. 
\end{enumerate}
Throughout this section we will construct and analyse a conforming and a nonconforming family of VE methods.
\subsection{Virtual element spaces}\label{subsec:3.1}
\textbf{VE spaces for displacement approximation}. {First we} define the bilinear form $a^K$  as the restriction to $K$ of  
\[a(v,w):= \int_\Omega \nabla^2v:\nabla^2w\dx.\] 
For $K\in\cT_h$ and $k\geq 2$, define the projection operator $\pd_k:H^2(K)\to \p_k(K)$, for $v\in H^2(K)$, by 
\begin{align}
a^K(\pd_k v,\chi_k) = a^K(v,\chi_k)&\qquad\forall \chi_k\in \p_k(K),\label{pd1}
\end{align}
with the additional conditions
\begin{subequations}
	\begin{align}
		\overline{\pd_kv} = \overline{v} \quad&\text{and}\quad \overline{\nabla\pd_kv} = \overline{\nabla v} &\text{for conforming VEM}, \label{pd2a}\\\overline{\pd_kv} = \overline{v} \quad&\text{and}\quad \int_{\partial K}\nabla\pd_kv \ds = \int_{\partial K}\nabla v\ds & \text{for nonconforming VEM}, \label{pd2b}&
	\end{align}
\end{subequations}
where    $\overline{v}$ is the average $\frac{1}{N_K}\sum_{i=1}^{N_K}v(V_i)$ of the values of $v$ at the vertices $V_i$ of $K$. Since the linear polynomials $\chi_k\in\p_1(K)\subset\p_k(K)$ lead to the identity $0=0$ in   \eqref{pd1}, it follows that the two conditions in   \eqref{pd2a} for conforming and \eqref{pd2b} for nonconforming fix the affine contribution and  define $\pd_k v$ uniquely for a given $v$. Furthermore, the {Poincar\'e--Friedrichs} inequality implies 
\begin{align}\|v-\pd_kv\|_K\lesssim h_K|v-\pd_kv|_{1,K}\lesssim h_K^2|v-\pd_kv|_{2,K}.\label{PF}\end{align}
\par The local conforming VE space $\vhc(K)$ \cite{brezzi13} is a set of solutions to a biharmonic problem over $K$ with clamped boundary conditions on $\partial K$, and  it is defined, for $k\geq 2$ and $r=\max\{k,3\}$, as 
\[\vhc(K):= \begin{dcases}
	\begin{rcases}
		&v_h \in H^2(K)\cap C^1(\partial K) : \Delta^2 v_h \in\p_k(K),\; v_h|_e\in \p_r(e)\;\text{and}\\&\;\nabla v_h|_e\cdot\nn^e_K\in\p_{k-1}(e)\quad\forall\;e\in\partial K,\;\text{and}\\&\; (v_h-\pd_kv_h,\chi)_K=0\quad\forall\;\chi\in\p_k(K)\setminus\p_{k-4}(K)
	\end{rcases}.
\end{dcases}\]
\par On the other hand, the local nonconforming VE space  is a set of solutions to a biharmonic problem with simply supported boundary conditions and was first introduced in \cite{zhao18}. However, it is pointed out in \cite{carstensen22b} that the definition in \cite{zhao18} works for a polygon $K$ without hanging nodes, and that new work provides an alternative definition for the lowest-order case ($k=2$) with possibly hanging nodes in $K$. In this paper, we extend such a definition of the nonconforming VE space for a general degree $k$. First we need  some preliminary geometrical notations. Let $K\in\cT_h$ be a polygonal element, and $\e_K:=\{e_1,\dots,e_{N_K}\}$ and $V_1,\dots,V_{N_K}$ be the edges and vertices of $K$. Suppose that $z_1,\dots,z_{\tilde{N}_K}$ denote the corner points of $K$ for some $\tilde{N}_K\leq N_K$, where the angle at each $z_j$ is different from $0,\pi,2\pi$.    The boundary $\partial K =e_1\cup\dots\cup e_{N_K}$ can also be viewed as a union of the sides $s_1,\dots,s_{\tilde{N}_K}$, where $s_j:=\text{conv}\{z_{j},z_{j+1}\}$ (convex hull of $z_j$ and $z_{j+1}$) for $z_j = V_{m_j}$ and  $z_{j+1}=V_{m_j+n_j}$ with $z_{\tilde{N}_K+1}=z_1$. Note that $m_j$ is the index in the vertex numbering corresponding to the $j^\text{th}$ index of the corners of $K$ and $n_j$ is the total number of straight edges on the sides $s_j$.   See a sketch in Figure~\ref{fig:P} (e.g., $m_1=1$ and  $n_1=2$, $m_2=3$ and $n_2=1$, and so on.)
\begin{figure}[!t]
\centering
\includegraphics[width=0.4\linewidth]{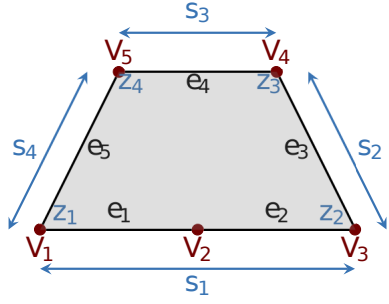}              
\caption{{Sample of pentagonal element} with vertices $V_1,\dots,V_5$, edges $e_1,\dots,e_5$, corners $z_1,\dots,z_4$, and sides $s_1,\dots,s_4$. }
\label{fig:P}
\end{figure}
With these  notations, we are in a position to define the local nonconforming VE space $\vhnc(K)$ for $k\geq 2$ by
\[
\vhnc(K):=\begin{dcases}
\begin{rcases}
	&v_h\in H^2(K): \Delta^2v_h\in \p_k(K),\;v_h|_e\in\p_k(e)\;\text{and}\;\Delta v_h|_e\in\p_{k-2}(e)\\&\quad\forall\;e\in\e_K,\;v_h|_{s_j}\in C^1(s_j),\;\int_{e_{m_j}}v_h\chi\ds=\int_{e_{m_j}}\pd_k v_h\chi\ds\quad\\&\quad\forall\;\chi\in\p_{k-2}(e_{m_j}),\;\text{and}\;\int_{e_{m_j+i}}v_h\chi\ds=\int_{e_{m_j+i}}\pd_k v_h\chi\ds\\&\quad\forall\;\chi\in\p_{k-3}(e_{m_j+i})\;\text{for}\;i=1,\dots,n_j,\;\text{and}\;j=1,\dots,\tilde{N}_K,\\&\quad(v_h-\pd_k v_h,\chi)_K=0\quad\forall\;\chi\in\p_k(K)\setminus\p_{k-4}(K)
\end{rcases}.
\end{dcases}
\]
The local degrees of freedom (DoFs) for both conforming and nonconforming VE spaces are summarised in Table~\ref{table1}. 

\begin{table}[t!]
\centering
\begin{tabular}{|l|l|l|}
	\hline
	\hline
	degree & DoFs of $v_h\in\vhc(K)$   & DoFs of $v_h\in\vhnc(K)$ \vphantom{$\displaystyle{\int_K^K}$} \\
	\hline
	\hline
	$k\geq 2$ & 
	($\mathbb{D}$1)  $v_h(V_i)\quad\forall\;i=1,\dots,N_K$ & ($\mathbb{D}$1$^{\star}$)  $v_h(V_i)\quad\forall\;i=1,\dots,N_K$\vphantom{$\displaystyle{\int_K^K}$}\\
	&($\mathbb{D}$2) $h_{V_i}\nabla v_h (V_i)\quad\forall\;i=1,\dots,N_K$ & ($\mathbb{D}$2$^{\star}$) $\int_e\partial_{\nn} v_h\chi\ds\quad\forall\;\chi\in\M_{k-2}(e)$\vphantom{$\displaystyle{\int_K^K}$}\\[1ex]
	\hline
	$k\geq 3$ & 
	($\mathbb{D}$3) $\int_e\partial_{\nn} v_h\chi\ds\quad\forall\;\chi\in\M_{k-3}(e)$ &  ($\mathbb{D}$3$^{\star}$) $\dashint_e v_h\chi\ds\quad\forall\;\chi\in\M_{k-3}(e)$\vphantom{$\displaystyle{\int_K^K}$} \\[1ex]
	\hline
	$k\geq 4$ & 
	($\mathbb{D}$4) $\dashint_e v_h\chi\ds\quad\forall\;\chi\in\M_{k-4}(e)$ & ($\mathbb{D}$4$^{\star}$) $\dashint_K v_h\chi\dx\quad\forall\;\chi\in\M_{k-4}(K)$\vphantom{$\displaystyle{\int_K^K}$} \\[1ex]
	&($\mathbb{D}$5) $\dashint_K v_h\chi\dx\quad\forall\;\chi\in\M_{k-4}(K)$ &\\[1ex]
	\hline
\end{tabular}
\caption{The left panel describes the DoFs of $\vhc(K)$ with the characteristic length (see \cite{brezzi13}, for example) $h_{V_i}$ associated with each vertex $V_i$  for all $i=1,\dots,N_K$ and the right column lists the DoFs of $\vhnc(K)$. The edge $e$ belongs to the set $\e_K$ of all edges  of $K$.}
\label{table1}
\end{table}
It can be shown that the triplets $(K,\vhc(K),\{(\mathbb{D}1)-(\mathbb{D}5)\})$ and $(K,\vhnc(K),\{(\mathbb{D}1^{\star})-(\mathbb{D}4^{\star})\})$ form a finite element in the sense of Ciarlet \cite{ciarlet02}, and the projection operator $\pd_k v_h$ for $v_h\in \vhc(K)$ (resp. $v_h\in\vhc(K)$) is computable in terms of the DoFs ($\mathbb{D}$1)-($\mathbb{D}$5) (resp. ($\mathbb{D}$1$^{\star}$)-($\mathbb{D}$4$^\star$)). We refer to \cite{brezzi13} (resp. \cite{carstensen22b}) for a proof.
\par Let $\Pi_k$ denote the $L^2$-projection onto the polynomial space $\p_k(K)$. That is, 
\[(\Pi_k v, \chi)_K = (v,\chi)_K\qquad\forall \;\chi\in\p_k(K).
\]
The orthogonality condition in the definition of the local VE spaces $\vhc(K)$ and $\vhnc(K)$ implies that $\Pi_k$ is also computable in terms of the DoFs.
\par For $v\in H^1(K)$ and $\vec{\chi}\in(\p_{k-1}(K))^2$, an integration by parts leads to the expressions 
\begin{align}(\Pi_{k-1}\nabla v, \vec{\chi})_K=-( v, \bdiv\vec{\chi})_K+(v,\chi\cdot\nn^e_K)_{\partial K}=-(\Pi_k v, \bdiv\vec{\chi})_K+(v,\chi\cdot\nn^e_K)_{\partial K},\label{L2g}
\end{align}
owing to the definition of $\Pi_k$ in the last step. Observe that the DoFs ($\mathbb{D}$1)-($\mathbb{D}$2) and ($\mathbb{D}$4) determine  $v_h\in\p_r(e)$ explicitly for all $e\in\partial K$. This and the computability of $\Pi_k$ imply that $\Pi_{k-1}\nabla v_h$ for $v_h\in \vhc(K)$ is computable in terms of the DoFs. Since $\pd_k v_h$ is computable, the values  $\int_{e_{m_j}}v_h\chi\ds$ for $\chi\in\M_{k-2}(e_{m_j})$ are computable from the definition of $\vhnc(K)$. If $n_j=0$, these $(k-1)$ estimates, and  the values at the vertices $V_{m_j}$ and $ V_{m_j+1}$    uniquely determine  $v_h\in\p_k(e_{m_j})$. If $n_j>0$, the point values $v_h(V_{m_j+i}), v_h(V_{m_j+i+1}), \partial_{\mathbf{\tau}} v_h(V_{m_j+i})$  and $\int_{e_{m_j+i}}v_h\chi\ds$ for $\chi\in\M_{k-3}(e_{m_j+i})$  evaluate $v_h$ on each edge $e_{m_j+i}$ for $i=1,\dots,n_j, j=1,\dots,\tilde{N}_K$, and consequently $v_h$ is known on the boundary $\partial K$. Similarly as above, this step and the computability of $\Pi_k$ imply that $\Pi_{k-1}\nabla v_h$   is computable in terms of the DoFs for $v_h\in \vhnc(K)$.
\begin{proposition}[Polynomial approximation \cite{brenner2008}]\label{prop:poly}
Under the assumption \ref{M1}, for every $v\in H^{s}(K)$, there exists $\chi_k\in\p_k(K)$ with $k\in\mathbb{N}_0$ such that
\begin{align*}
|v-\chi_k|_{m,K}\lesssim h_K^{s-m}|v|_{s,K}\quad\text{for}\;0\leq m \leq s\leq k+1.
\end{align*}	
\end{proposition}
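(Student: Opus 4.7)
The plan is to take $\chi_k$ to be the Taylor polynomial of $v$ of degree $k$ averaged over the ball $B\subset K$ of radius $\rho h_K$ guaranteed by assumption~\ref{M1}, and then invoke standard Bramble--Hilbert--type estimates on star-shaped domains.

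First I would recall the averaged (Sobolev) Taylor polynomial
\begin{equation*}
\chi_k(x) := \int_{B} T_k^{y}v(x)\,\phi(y)\,\dy,
\end{equation*}
where $T_k^{y}v(x)$ is the classical Taylor polynomial of degree $k$ of $v$ centred at $y$, and $\phi$ is a standard cut-off supported in $B$ with $\int_B \phi = 1$. Because $K$ is star-shaped with respect to every point of $B$, the segment from any $x\in K$ to any $y\in B$ lies in $K$, so $\chi_k$ is well defined for $v\in H^{s}(K)$ with $s\ge 0$ (using the integral remainder formula; for $s<k+1$ one invokes the density of smooth functions and a limiting argument to extend $T_k$ by its averaged version).

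Next I would collect two ingredients. (i) The operator $v\mapsto \chi_k$ is linear, bounded from $H^{s}(K)$ to $H^{m}(K)$ for any $0\le m \le s$, and reproduces polynomials of degree $\le k$, i.e.\ $\chi_k = v$ whenever $v\in\mathbb{P}_k(K)$. This is the content of the Bramble--Hilbert lemma on star-shaped domains (see, e.g., Lemma~4.3.8 in \cite{brenner2008}). (ii) By the constructive bounds in Proposition~4.3.2 of \cite{brenner2008}, one has the local estimate
\begin{equation*}
|v-\chi_k|_{m,K} \lesssim h_K^{\,s-m}\,|v|_{s,K},\qquad 0\le m\le s\le k+1,
\end{equation*}
with a constant depending only on $k$, $m$, $s$ and the chunkiness parameter $\rho$ appearing in \ref{M1}.

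The only step requiring care is tracking the dependence on the geometry: the hidden constant must not blow up as the polygonal element becomes anisotropic. This is precisely where the mesh regularity assumption \ref{M1} enters, since it bounds the ratio of $h_K$ to the radius of the inscribed ball $B$ uniformly; the scaled reference argument of Chapter~4 of \cite{brenner2008} then produces the $h_K^{\,s-m}$ scaling with a constant independent of $K$. Putting the two ingredients together yields the asserted bound. Observe that the estimate is trivially sharp for $s=m$ (it reduces to the boundedness of $I-\chi_k$ in $H^{s}$) and for $m=0$, $s=k+1$ it recovers the classical $L^{2}$ order $k+1$ of polynomial approximation, which is consistent with the convergence rates used in the subsequent sections.
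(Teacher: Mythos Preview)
Your sketch is correct and follows the standard averaged Taylor polynomial / Bramble--Hilbert argument from \cite{brenner2008}, which is precisely the reference the paper cites. Note that the paper does not give its own proof of this proposition---it is simply quoted as a known result---so there is nothing further to compare.
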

\noindent  The global VE spaces $\vhc$  and $\vhnc$ are defined, respectively, as 
\begin{align*}
\vhc:= \{v_h\in V : v_h|_K\in \vhc(K)\quad\forall\;K\in\cT_h\},
\end{align*}
and 
\begin{align*}
\vhnc:=\begin{dcases}
	\begin{rcases}
		&v_h\in L^2(\Omega) :v_h|_K\in \vhnc(K)\quad\forall\;K\in\cT_h,\; v_h\; \text{is continuous at}\\& \text{interior vertices and zero at boundary vertices},\;\int_e[\partial_{\nn}v_h]\chi\ds=0\\&\forall\;\chi\in\p_{k-2}(e),\;e\in\e^i\cup\e^c\;\text{and}\;\int_e [v_h]\chi\ds =0\quad\forall\;\chi\in\p_{k-3}(e),\; e\in\e
	\end{rcases}.
\end{dcases}
\end{align*}
\textbf{VE spaces for pressure approximation}. We define the projection operator $\pg_\ell:H^1(K)\to\p_\ell(K)$ for $\ell\geq 1$ and $q\in H^1(K)$ through the following equation 
\begin{align}
(\nabla \pg_\ell q, \nabla \chi_\ell)_K &= (\nabla q, \nabla \chi_\ell)_K\qquad\forall\;\chi_\ell\in\p_\ell(K),\label{pg1}
\end{align}
with the additional condition needed to fix the constant
\begin{subequations}
\begin{align}
	\overline{\pg_\ell q} &= \overline{q} &\text{for conforming VEM},\label{pg2a}\\
	\int_{\partial K} \pg_\ell q \ds &= \int_{\partial K} q\ds &\text{for nonconforming VEM}. 
\end{align}
\end{subequations}
This defines $\pg_\ell q$ uniquely for a given $q$.  To approximate the pressure space $Q$, we introduce the local conforming  VE space $\qhc(K)$ for $\ell\geq 1$ and $K\in\cT_h$ as the set of solutions to a Poisson problem with Dirichlet boundary conditions \cite{beirao13}. In particular,
\[
\qhc(K):= \begin{dcases}
\begin{rcases}
	&q_h\in H^1(K)\cap C^0(\partial K): \Delta q_h \in \p_\ell(K),\; q_h|_e\in\p_\ell(e)\quad\forall\;e\in\partial K,\\&\qquad\text{and}\; (q_h-\pg_\ell q_h,\chi)_K=0\quad\forall\;\chi\in\p_\ell(K)\setminus\p_{\ell-2}(K)
\end{rcases}.
\end{dcases}
\]
In turn, the local nonconforming VE space $\qhnc(K)$  is the set of solutions to a Poisson problem with Neumann boundary condition \cite{Ayuso16} and is defined for $\ell\geq 1$ as
\[
\qhnc(K):= \begin{dcases}
\begin{rcases}
	&q_h\in H^1(K)\cap C^0(\partial K): \Delta q_h \in \p_\ell(K),\; \partial_{\nn} q_h|_e\in\p_{\ell-1}(e)\\&\quad\forall\;e\in\partial K,\;\text{and}\; (q_h-\pg_\ell q_h,\chi)_K=0\quad\forall\;\chi\in\p_\ell(K)\setminus\p_{\ell-2}(K)
\end{rcases}.
\end{dcases}
\]
The DoFs for $\qhc(K)$ and  $\qhnc(K)$ are provided in Table~\ref{table2}. 

\begin{table}[t!]
\centering
\begin{tabular}{|l|l|l|}
	\hline
	\hline
	degree & DoFs of $q_h\in\qhc(K)$   & DoFs of $q_h\in\qhnc(K)$ \vphantom{$\displaystyle{\int_K^K}$}\\
	\hline
	\hline
	$\ell\geq 1$ & 
	($\mathbb{F}$1)  $q_h(V_i)\quad\forall\;i=1,\dots,N_K$ & ($\mathbb{F}$1$^\star$)  $\dashint_e q_h\chi\ds\quad\forall\chi\in\M_{\ell-1}(e)$\vphantom{$\displaystyle{\int_K^K}$}\\
	\hline
	$\ell\geq 2$ & 
	($\mathbb{F}$2) $\dashint_e q_h\chi\ds\quad\forall\chi\in\M_{\ell-2}(e)$ & ($\mathbb{F}$2$^\star$) $\dashint_K q_h\chi\dx\quad\forall\chi\in\M_{\ell-2}(K)$\vphantom{$\displaystyle{\int_K^K}$}\\
	&($\mathbb{F}$3) $\dashint_K q_h\chi\dx\quad\forall\chi\in\M_{\ell-2}(K)$&\vphantom{$\displaystyle{\int_K^K}$}\\
	\hline
\end{tabular}
\caption{The left (resp. right) panel describes the DoFs of $\qhc(K)$  (resp. $\qhnc(K)$). The edge $e$ belongs to the set $\e_K$ of all edges  of $K$.}
\label{table2}
\end{table}
The triplets $(K,\qhc(K),\{(\mathbb{F}1)-(\mathbb{F}3)\})$ and $(K,\qhnc(K),\{(\mathbb{F}1^\star)-(\mathbb{F}2^\star)\})$ form a finite element in the sense of Ciarlet \cite{ciarlet02} (see, e.g. \cite{beirao13}). Note that  $\pg_\ell q_h$ can be computed from   DoFs of ($\mathbb{F}$1)-($\mathbb{F}$3) (resp. ($\mathbb{F}$1$^\star$)-($\mathbb{F}$2$^\star$)) for $q_h \in\qhc(K)$ (resp. $q_h\in \qhnc(K)$). Refer to \cite{beirao13} (resp. \cite{carstensen22}) for a proof. Consequently,   the $L^2$-projection $\Pi_\ell$ is also  computable  from  the orthogonality condition in the definition of the spaces $\qhc(K)$ and $\qhnc(K)$. This and the explicit expression of $q_h$ on the boundary $\partial K$ in \eqref{L2g} show that $\Pi_{\ell-1}\nabla q_h$ is computable for $q_h\in\qhc(K)$. The computability of $\Pi_\ell$ and ($\mathbb{F}$1$^\star$) in \eqref{L2g} imply that of $\Pi_{\ell-1}\nabla q_h$  for $q_h\in\qhnc(K)$.
\par Next we define the global VE spaces for conforming and nonconforming pressure approximation, for $\ell\geq 1$, as 
\[\qhc:=\{q_h\in Q: q_h|_K\in Q_h^\ell(K)\quad\forall\;K\in\cT_h\},\]
and 
\begin{align*}
\qhnc:=\begin{dcases}
	\begin{rcases}
		q_h\in L^2(\Omega): &q_h|_K\in \qhnc(K)\quad\forall\;K\in\cT_h\;\text{and}\;\\
		&\int_e[q_h]\chi\ds=0\quad\forall\;\chi\in\p_{\ell-1}(e),\;\forall\;e\in\e^i\cup\e^s
	\end{rcases},
\end{dcases}
\end{align*}
respectively. 
\subsection{Discrete problem and well-posedness}\label{subsec:3.2}
Let us first set the continuous bilinear forms $a_1:V\times V, a_2:Q\times V$ and $a_3:Q\times Q$  as
\begin{align*}
a_1(u,v)&:=(u,v)_\Omega+a(u,v) &\forall\;u,v\in V,\\
a_2(p,v)&:= \alpha (\nabla p,\nabla v)_\Omega &\forall\;p\in Q\;\text{and}\;\forall\; v\in V,\\
a_3(p,q)&:= \beta(p,q)_\Omega+\gamma(\nabla p,\nabla q)_\Omega&\forall\;p,q\in Q
\end{align*}
with the local counterparts $a_1^K, a_2^K$ and $a_3^K$ for $K\in\cT_h$ and the piecewise versions $a_1^\pw:=\sum_K a_1^K, a_2^\pw:=\sum_K a_2^K$ and $a_3^\pw:=\sum_K a_3^K$ respectively. For all $u_h,v_h\in \vhc(K)$ or $\vhnc(K)$ and $p_h,q_h\in \qhc(K)$ or $\qhnc(K)$ with $k\geq 2$ and $\ell\geq 1$, define the discrete counterparts by
\begin{subequations}\begin{align}
	a_1^h(u_h,v_h)|_K&:= (\Pi_k u_h,\Pi_kv_h)_K+S_{1,0}^{K}((1-\Pi_k)u_h,(1-\Pi_k)v_h)\nonumber\\&\quad+(\Pi_{k-2}(\nabla^2 u_h),\Pi_{k-2}(\nabla^2 v_h) )_K+S_{\nabla^2}^{K}((1-\pd_k)u_h,(1-\pd_k)v_h),\label{a1h}\\
	a_2^h(p_h,v_h)|_K&:=\alpha(\Pi_{\ell-1}\nabla p_h,\Pi_{k-1}\nabla v_h)_K,\label{a2h}\\
	a_3^h(p_h,q_h)|_K&:=\beta(\Pi_\ell p_h,\Pi_\ell q_h)_K+S_{2,0}^{K}((1-\Pi_\ell)p_h,(1-\Pi_\ell)q_h)\nonumber\\&\quad+\gamma(\Pi_{\ell-1}(\nabla p_h),\Pi_{\ell-1}(\nabla q_h))_K+S_{\nabla}^{K}((1-\pg_\ell)p_h,(1-\pg_\ell)q_h).\label{a3h}
\end{align}\end{subequations}
The stabilisation terms $S_{\nabla^2}^{K}$ and $S_{1,0}^K$ on $\vhc(K)$ or $\vhnc$, and $S_{\nabla}^K$ and $S_{2,0}^K$ on $\qhc(K)$ or $\qhnc(K)$ are positive definite bilinear forms and there exist positive constants $C_{\nabla^2},C_{1,0},C_{\nabla},C_{2,0}$ such that
\begin{subequations}\begin{align}
	C_{\nabla^2}^{-1} |v_h|_{2,K}^2&\leq S_{\nabla^2}^K(v_h,v_h)\leq C_{\nabla^2} | v_h|_{2,K}^2&\forall\;v_h\in \text{Ker$(\pd_k)$},\label{s1}\\C_{1,0}^{-1} \| v_h\|_K^2&\leq S_{1,0}^K(v_h,v_h)\leq C_{1,0} \|v_h\|_K^2 &\forall\;v_h\in \text{Ker$(\Pi_k)$},\label{s2}\\
	C_{\nabla}^{-1}\gamma |q_h|_{1,K}^2&\leq S_{\nabla}^K(q_h,q_h)\leq C_{\nabla}\gamma |q_h|_{1,K}^2 &\forall\;q_h\in \text{Ker$(\pg_\ell)$},\label{s3}\\C_{2,\nabla}^{-1} \beta\|q_h\|_K^2&\leq S_{2,0}^K(q_h,q_h)\leq C_{2,0} \beta\| q_h\|_K^2 &\forall\;q_h\in \text{Ker$(\Pi_\ell)$}.\label{s4}
\end{align} \end{subequations}
Let $\text{dof}_i$ denote the $i^\text{th}$ degree of freedom. Standard examples for  stabilisation terms  satisfying \eqref{s1}-\eqref{s4} respectively are
\begin{align*}
&S_{\nabla^2}^K(v_h,w_h)=h_K^{-2}\sum_{i}\text{dof}_i(v_h)\text{dof}_i(w_h),\quad S_{1,0}^K(v_h,w_h) = h_K^{4}S_{\nabla^2}^K(v_h,w_h),\\
&S_{\nabla}^K(p_h,q_h)=\gamma\sum_{j}\text{dof}_j(p_h)\text{dof}_j(q_h),\quad S_{2,0}^K(p_h,q_h)=\beta h_K^2\sum_{j}\text{dof}_j(p_h)\text{dof}_j(q_h),
\end{align*}
for all $v_h,w_h\in \vhc(K)\;\text{or}\; \vhnc(K)$ and $p_h,q_h\in \qhc(K)\;\text{or}\; \qhnc(K)$. The global discrete bilinear forms $a_1^h: \vhc \times \vhc$ (resp. $\vhnc\times\vhnc$), $ a_2^h:\qhc\times \vhc$ (resp. $\qhnc\times \vhnc$)  and $a_3^h:\qhc\times\qhc$ (resp. $\qhnc\times\qhnc$) are defined by $a_1^h(\cdot,\cdot) := \sum_{K\in\cT_h}a_1^h(\cdot,\cdot)|_K, a_2^h(\cdot,\cdot):= \sum_{K\in\cT_h}a_2^h(\cdot,\cdot)|_K$ and $a_3^h(\cdot,\cdot):=\sum_{K\in\cT_h}a_3^h(\cdot,\cdot)|_K$ for conforming (resp. nonconforming) VEM. We assume that $\ell\leq k$, and then
the discrete problem is to find $(u_h,p_h)\in \vhc\times\qhc$ (resp. $\vhnc\times\qhnc$) such that
\begin{subequations}\label{eq:weak_discrete}
\begin{align}
	a_1^h(u_h,v_h)-a_2^h(p_h,v_h) &= (\tilde{f}_h,v_h)_\Omega &\forall\;v_h\in \vhc\;(\text{resp.}\;\vhnc),\\
	a_2^h(q_h,u_h)+a_3^h(p_h,q_h)&=(\tilde{g}_h,q_h)_\Omega &\forall\; q_h\in\qhc\;(\text{resp.}\;\qhnc),
\end{align}
\end{subequations}
with the discrete right-hand sides $(\tilde{f}_h,v_h)_\Omega:= (\tilde{f},\Pi_kv_h)_\Omega$ and $(\tilde{g}_h,q_h)_\Omega:= (\tilde{g},\Pi_\ell q_h)_\Omega$. To rewrite the above discrete  problem, define the discrete product space $\bH_\epsilon^{h,\mathrm{c}}:=\vhc\times \qhc$ and the discrete  operator  $\cA_h^c: \bH_\epsilon^{h,\mathrm{c}} \to \bH_\epsilon^{h,\mathrm{c}}$  as 
\begin{align}\langle \cA_h^c(\vec{\bu}_h),\vec{\bv}_h  \rangle:=
a_1^h(u_h,v_h)-a_2^h(p_h,v_h)+a_2^h(q_h,u_h)+a_3^h(p_h,q_h)\label{Ah} \end{align}
for $\vec{\bu}_h= (u_h,p_h),\vec{\bv}_h = (v_h,q_h)\in \bH_\epsilon^{h,\mathrm{c}}$. We also define the linear and bounded functional $\cF_h^c:\bH_\epsilon^{h,\mathrm{c}} \to \mathbb{R}$ as
\[
\vec{\bv}_h \mapsto \cF_h^c(\vec{\bv}_h)  :=  (\tilde{f}_h,v_h)_{\Omega} +  (\tilde{g}_h,q_h)_{\Omega},
\]
and therefore  problem \eqref{eq:weak_discrete} is recast as: Find $\vec{\bu}_h^c \in \bH_\epsilon^{h,\mathrm{c}}$ such that 
\begin{equation}\label{eq:operator_discrete}
\langle \cA_h^c(\vec{\bu}_h^c),\vec{\bv}_h \rangle = \cF_h^c(\vec{\bv}_h) \qquad \forall\; \vec{\bv}_h \in \bH_\epsilon^{h,\mathrm{c}}.\end{equation}
Similarly  we define $\bH_\epsilon^{h,\mathrm{nc}}:=\vhnc\times \qhnc$, the discrete  operators  $\cA_h^{\mathrm{nc}}$ and $\cF_h^{\mathrm{nc}}$, and seek 
$\vec{\bu}_h^{\mathrm{nc}} \in \bH_\epsilon^{h,\mathrm{nc}}$ such that 
\begin{equation}\label{eq:operator_discrete_nc}
\langle \cA_h^{\mathrm{nc}}(\vec{\bu}_h),\vec{\bv}_h \rangle = \cF_h^{\mathrm{nc}}(\vec{\bv}_h) \qquad \forall\; \vec{\bv}_h \in \bH_\epsilon^{h,\mathrm{nc}}.
\end{equation}
Define the piecewise version $\|\cdot\|_{\bH_\epsilon^h}$ of the norm $\|\cdot\|_{\bH_\epsilon}$ for $\vec{\bu}=(u,p)\in H^2(\cT_h)\times H^1(\cT_h)$ as 
\[\| \vec{\bu} \|_{\bH_\epsilon^h}^2 : = 
\| u \|_{\Omega}^2+| u |_{2,h}^2
+ \beta \|{p} \|_{\Omega}^2
+ \gamma| {p} |_{1,h}^2:=\sum_{K\in\cT_h}(\| u \|_{K}^2+| u |_{2,K}^2
+ \beta \|{p} \|_{K}^2
+ \gamma| {p} |_{1,K}^2).
\]
The following result yields the solvability of
the discrete problems.
\begin{theorem}\label{theo:iso_discrete}
Problem \eqref{eq:operator_discrete} (resp. \eqref{eq:operator_discrete_nc}) is well-posed in the space $\bH_\epsilon^{h,c}$ (resp. $\bH_\epsilon^{h,\mathrm{nc}}$) equipped with the norm \eqref{eq:product-norm} (resp. $\|\cdot\|_{\bH_\epsilon^{h}}$). 
\end{theorem}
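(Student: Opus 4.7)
The plan is to mimic the Lax--Milgram argument of Theorem~\ref{theo:iso} on the Hilbert spaces $\bH_\epsilon^{h,\mathrm{c}}$ and $\bH_\epsilon^{h,\mathrm{nc}}$ equipped with $\|\cdot\|_{\bH_\epsilon}$ and $\|\cdot\|_{\bH_\epsilon^h}$, respectively. There are three items to check: continuity of $\cA_h^{\mathrm{c}}$ (resp.~$\cA_h^{\mathrm{nc}}$), continuity of $\cF_h^{\mathrm{c}}$ (resp.~$\cF_h^{\mathrm{nc}}$), and coercivity of $\cA_h^{\mathrm{c}}$ (resp.~$\cA_h^{\mathrm{nc}}$) in the corresponding norm.

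For coercivity, I would first observe that the off-diagonal contributions cancel upon testing with $\vec{\bv}_h=\vec{\bu}_h$, yielding $\langle\cA_h(\vec{\bu}_h),\vec{\bu}_h\rangle=a_1^h(u_h,u_h)+a_3^h(p_h,p_h)$, exactly as in the continuous case. The Pythagorean identity $\|v_h\|_K^2=\|\Pi_k v_h\|_K^2+\|(1-\Pi_k)v_h\|_K^2$ combined with the stabilisation bound \eqref{s2} then controls the $L^2$ piece of $a_1^h$. For the Hessian piece, since $\nabla^2\pd_k v_h$ is a polynomial of degree at most $k-2$, one has the identity $(1-\Pi_{k-2})\nabla^2 v_h=(1-\Pi_{k-2})\nabla^2(1-\pd_k)v_h$, so that \eqref{s1} dominates $\|(1-\Pi_{k-2})\nabla^2 v_h\|_K^2$ and consequently $\|\Pi_{k-2}\nabla^2 v_h\|_K^2+S_{\nabla^2}^K((1-\pd_k)v_h,(1-\pd_k)v_h)\gtrsim|v_h|_{2,K}^2$. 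The same strategy applied to $a_3^h$ via \eqref{s3} and \eqref{s4} delivers $\langle\cA_h(\vec{\bu}_h),\vec{\bu}_h\rangle\gtrsim\|\vec{\bu}_h\|_{\bH_\epsilon^h}^2$.

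For continuity of $\cA_h$, Cauchy--Schwarz together with the upper stability bounds in \eqref{s1}--\eqref{s4} directly handles $a_1^h$ and $a_3^h$. The cross term satisfies $|a_2^h(p_h,v_h)|\leq\alpha|p_h|_{1,h}|v_h|_{1,h}$ by the $L^2$-boundedness of the polynomial projections; exploiting $\alpha\leq 1\leq\gamma$ and a Poincar\'e-type bound $|v_h|_{1,h}\lesssim|v_h|_{2,h}$ then gives $|a_2^h(p_h,v_h)|\lesssim\sqrt{\gamma}|p_h|_{1,h}|v_h|_{2,h}\lesssim\|\vec{\bu}_h\|_{\bH_\epsilon^h}\|\vec{\bv}_h\|_{\bH_\epsilon^h}$. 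Continuity of $\cF_h$ follows from Cauchy--Schwarz together with $\|\Pi_k v_h\|_\Omega\leq\|v_h\|_\Omega$ and $\|\Pi_\ell q_h\|_\Omega\leq\|q_h\|_\Omega$.

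The hard part will be the nonconforming case, where establishing $|v_h|_{1,h}\lesssim|v_h|_{2,h}$ for $v_h\in\vhnc$ must exploit the edge-jump orthogonalities built into $\vhnc$ (against $\p_{k-2}(e)$ for $\partial_{\nn}v_h$ and against $\p_{k-3}(e)$ for $v_h$) together with the vertex continuity and the boundary conditions. This broken Poincar\'e--Friedrichs type inequality is precisely the norm-equivalence result for nonconforming VE functions announced in Subsection~\ref{subsec:std_est}; in the conforming case it reduces to the classical equivalence \eqref{norm-equiv} on $V$.
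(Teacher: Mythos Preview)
Your proposal is correct and mirrors the paper's Lax--Milgram argument: the paper likewise cancels the off-diagonal terms and derives coercivity of $a_1^h$ (and analogously $a_3^h$) from exactly the bound $\|(1-\Pi_{k-2})\nabla^2 v_h\|_\Omega\le|(1-\pd_k)v_h|_{2,h}$ combined with \eqref{s1}--\eqref{s4} and a triangle inequality. You are actually more careful than the paper on the boundedness of the cross term $a_2^h$; the broken Poincar\'e--Friedrichs inequality $|v_h|_{1,h}\lesssim|v_h|_{2,h}$ for $\vhnc$ that you invoke is established later as Corollary~\ref{cor:disc-norm-equiv} (a consequence of the companion operator $J_2$) rather than in Subsection~\ref{subsec:std_est}.
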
 
\begin{proof}
The boundedness of $\cA_h^c$ and $\cA_h^{\mathrm{nc}}$  clearly follows from the stability of the $L^2$-projection operators $ \Pi_{k-2},\Pi_k, \Pi_{\ell-1},$ and $\Pi_\ell$ for $k\geq 2$ and $\ell\geq 1$, and from \eqref{s1}-\eqref{s4}. For $\vec{\bv}_h\in\bH_\epsilon^{h,c}$ or $\bH_\epsilon^{h,\mathrm{nc}}$, the definition \eqref{Ah} implies $\langle \cA_h(\vec{\bv}_h),\vec{\bv}_h\rangle = a_1^h(v_h,v_h)+a_3^h(q_h,q_h)$. The definition \eqref{a1h} of $a_1^h$  and the lower bounds of stabilisation terms \eqref{s1}-\eqref{s2} lead to
\begin{align*}a_1^h(v_h,v_h) &\gtrsim \|\Pi_k v_h\|_{\Omega}^2+\|(1-\Pi_k)v_h\|_\Omega^2 +\|\Pi_{k-2}(\nabla^2 v_h)\|_\Omega^2+|(1-\pd_k)v_h|_{2,h}^2 \\&
	\gtrsim \|v_h\|_\Omega^2+|v_h|_{2,\Omega}^2,
\end{align*}
where we have employed    $\|(1-\Pi_{k-2})(\nabla^2v_h)\|_\Omega\leq |(1-\pd_k)v_h|_{2,h} $ and  triangle inequalities in the last step. Analogously we can prove that $a_3^h$ is coercive, and consequently $\cA_h^c$ (also $\cA_h^{\mathrm{nc}}$) is coercive with respect to the weighted norm $\|\cdot\|_{\bH_\epsilon^{h}}$. Hence the Lax--Milgram  lemma concludes  the proof. 
\end{proof}
\section{Error analysis for conforming VEM}\label{sec:error-c}
This section recalls the standard conforming interpolation estimates and
establishes the a priori error estimates in the energy norm $\|\cdot\|_{\bH_\epsilon}$ (cf. Theorem~\ref{theo:cv-c}).
\begin{proposition}[Conforming interpolation \cite{cangiani17,chen2022}]\label{prop:inter_vc}
There exists an  interpolation operator $\vec{I}_h^c:(V\cap H^s(\Omega))\times (Q\cap H^r(\Omega)) \to \vhc\times \qhc$  such that, for $v\in V\cap  H^s(\Omega)$ with $2\leq s\leq k+1$ and $q\in Q\cap  H^r(\Omega)$ with $1\leq r\leq \ell+1$, $\vec{I}_h^cv:=(v_I^c,q_I^c)$ and
\[|v-v_I^c|_{j,h}\lesssim h^{s-j}|v|_{s,\Omega}\quad\text{for}\;0\leq j \leq{2}\quad\text{and}\quad|q-q_I^c|_{j,h}\lesssim h^{r-j}|q|_{r,\Omega}\quad\text{for}\;0\leq j \leq {1}.\]
\end{proposition}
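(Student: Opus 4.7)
The plan is to construct the interpolant componentwise by matching the degrees of freedom and then prove the approximation estimates via a Bramble--Hilbert argument combined with the polynomial approximation in Proposition~\ref{prop:poly}. I would treat the two factors $v \mapsto v_I^c$ and $q \mapsto q_I^c$ independently, since the operator $\vec I_h^c$ is the Cartesian product of the two local interpolation operators attached to $\vhc$ and $\qhc$.

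\emph{Local construction and well-posedness.} First I would define $v_I^c|_K \in \vhc(K)$ as the unique element whose degrees of freedom $(\mathbb{D}1)$--$(\mathbb{D}5)$ of Table~\ref{table1} coincide with those of $v$, and $q_I^c|_K \in \qhc(K)$ as the unique element whose DoFs $(\mathbb{F}1)$--$(\mathbb{F}3)$ of Table~\ref{table2} coincide with those of $q$. For $v\in H^s(K)$ with $s\geq 2$, the vertex DoFs for $v$ are only marginally defined (the Sobolev embedding $H^2(K)\hookrightarrow C^0(\bar K)$ holds but not $C^1(\bar K)$). For the borderline case $s=2$ one follows the standard VEM strategy (cf.~\cite{cangiani17}) of replacing pointwise evaluations by Scott--Zhang-type local averages over edge-patches, which preserves unisolvence while giving meaning to every DoF of a generic $H^2$-function. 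The inter-element compatibility of the DoFs (shared vertex values and gradients, edge moments of the trace and of the normal derivative) together with the homogeneous boundary DoFs on $\Gamma^c$ and $\Gamma^s$ ensure $v_I^c\in V$ and $q_I^c\in Q$, so that $\vec I_h^c$ maps into $\vhc\times \qhc$.

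\emph{Polynomial preservation, stability, and error estimate.} Because $\p_k(K)\subset \vhc(K)$ and $\p_\ell(K)\subset \qhc(K)$, the unisolvence of the DoFs yields the polynomial invariance $\chi_I^c = \chi$ for $\chi \in \p_k(K)$ (and analogously for pressure). Now for any $\chi\in\p_k(K)$ I would decompose
\begin{equation*}
v - v_I^c = (v - \chi) - (v - \chi)_I^c,
\end{equation*}
and bound each part separately. A Bramble--Hilbert/scaling argument on the reference configuration guaranteed by \ref{M1}--\ref{M2} produces the local stability bound
\begin{equation*}
|w_I^c|_{j,K} \lesssim \sum_{m=0}^{2} h_K^{m-j}\, |w|_{m,K}\qquad (0\leq j\leq 2),
\end{equation*}
for any $w\in H^2(K)$, by using standard scaled trace and inverse inequalities to control each DoF by $H^2$-norms on its supporting edge-patch. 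Applying this with $w=v-\chi$, and then choosing $\chi$ to be the polynomial approximant from Proposition~\ref{prop:poly} (so that $|v-\chi|_{m,K}\lesssim h_K^{s-m}|v|_{s,K}$), gives $|v-v_I^c|_{j,K}\lesssim h_K^{s-j}|v|_{s,K}$. Squaring and summing over $K\in\cT_h$ yields the stated broken-seminorm bound for $0\leq j\leq 2$. The argument for $q_I^c$ is entirely analogous but simpler since only $H^1$-stability of the $C^0$ interpolant is needed, and the Scott--Zhang averaging handles the vertex DoFs for $q\in H^r(K)$ with $r\geq 1$.

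\emph{Main obstacle.} The subtle point is the proof of the local stability bound for $v_I^c$ at the minimal regularity $s=2$, since the vertex gradient DoFs in $(\mathbb{D}2)$ are not continuous on $H^2(K)$ in two dimensions. Resolving this requires replacing pointwise vertex and vertex-gradient evaluations by suitably weighted averages over edge- or vertex-patches (as in \cite{cangiani17,chen2022}), with a careful verification that the resulting quasi-interpolant still preserves $\p_k(K)$ and satisfies $H^2$-stability via scaled trace inequalities. Once this is set up, the remainder of the proof is a routine application of Proposition~\ref{prop:poly} combined with the shape-regularity hypotheses \ref{M1}--\ref{M2}.
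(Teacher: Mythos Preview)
The paper does not actually prove this proposition; it is stated with citations to \cite{cangiani17,chen2022} and used as a black box. Your proof sketch follows precisely the standard argument found in those references---DoF-matching interpolant, polynomial invariance, local stability via scaling, and Bramble--Hilbert combined with Proposition~\ref{prop:poly}---and you have correctly identified the only genuine subtlety (the vertex-gradient DoFs at the borderline regularity $s=2$, handled by Scott--Zhang averaging). So your approach is both correct and in line with what the cited literature does; there is nothing further to compare.
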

Throughout this paper, the oscillations of $\tilde{f},\tilde{g}\in L^2(\Omega)$ for $k\geq 2$ and $\ell\geq 1$ are defined as 
\[ \mathrm{osc}_2(\tilde{f},\cT_{h}):= \Big(\sum_{K\in\cT_{h}}\|h_K^2(1-\Pi_k)\tilde{f}\|_K^2\Big)^{1/2},\;\mathrm{osc}_1(\tilde{g},\cT_{h}):= \Big(\sum_{K\in\cT_{h}}\|h_K(1-\Pi_\ell)\tilde{g}\|_K^2\Big)^{1/2}.\]
\begin{theorem}\label{theo:cv-c}
	Given $\vec{\bu}:=(u,p)\in (V\cap H^s(\Omega))\times (Q\cap H^r(\Omega))$ for $s\geq 2$ and  $r\geq 1$,  the unique solution $\vec{\bu}_h^{c}=(u_h^{c},p_h^{c})\in \bH_\epsilon^{h,c}=\vhc\times\qhc$ for $k\geq 2$ and $1\leq \ell\leq k$ to  \eqref{eq:operator_discrete} satisfies 
	\begin{align*}
		\|\vec{\bu}-\vec{\bu}_h^{c}\|_{\bH_\epsilon}&\lesssim \|\vec{\bu}-\vec{I}_h^c\vec{\bu}\|_{\bH_\epsilon}+\|\vec{\bu}-\vec{\Pi}_h \vec{\bu}\|_{\bH_\epsilon^h}+\alpha^{1/2}|u-\pg_\ell u|_{1,h}+\mathrm{osc}_2(\tilde{f},\cT_h)&\\&\quad+\mathrm{osc}_1(\tilde{g},\cT_h)\lesssim h^{\min\{k-1,s-2,\ell,r-1\}}(\|\tilde{f}\|_{s-4,\Omega}+\|\tilde{g}\|_{r-2,\Omega}),
	\end{align*}
	for $\vec{\Pi}_h\vec{\bu} = (\pd_k u,\pg_\ell p)$.
\end{theorem}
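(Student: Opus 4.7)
The plan is to use the coercivity of $\cA_h^c$ from Theorem~\ref{theo:iso_discrete} on the discrete error $\vec{\bchi}:=\vec{\bu}_h^c-\vec{I}_h^c\vec{\bu}\in\bH_\epsilon^{h,\mathrm{c}}\subset\bH_\epsilon$, and then to recover $\vec{\bu}-\vec{\bu}_h^c$ by a triangle inequality. Because $\vec{\bchi}$ is conforming, it tests both~\eqref{eq:operator} and~\eqref{eq:operator_discrete}, so
\begin{equation*}
\|\vec{\bchi}\|_{\bH_\epsilon}^2 \;\lesssim\; \langle \cA(\vec{\bu}) - \cA_h^c(\vec{I}_h^c\vec{\bu}),\vec{\bchi}\rangle + (\cF_h^c - \cF)(\vec{\bchi}).
\end{equation*}
I split the right-hand side into one contribution per bilinear form ($a_1^h$, $a_2^h$, $a_3^h$) and one data contribution, and estimate each by a quantity appearing in the theorem times $\|\vec{\bchi}\|_{\bH_\epsilon}$.

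For the diagonal forms I use elementwise $k$-consistency of $a_1^h$ and $\ell$-consistency of $a_3^h$ (the $L^2$-projection and stabilisation terms both vanish on polynomials of the appropriate degree), combined with insertion of $\pd_k u$ and $\pg_\ell p$, to obtain
\begin{align*}
a_1(u,v)-a_1^h(u_I^c,v) &= a_1(u-\pd_k u,v)+a_1^h(\pd_k u-u_I^c,v),\\
a_3(p,q)-a_3^h(p_I^c,q) &= a_3(p-\pg_\ell p,q)+a_3^h(\pg_\ell p-p_I^c,q).
\end{align*}
Cauchy--Schwarz, the stability bounds~\eqref{s1}--\eqref{s4} and a triangle inequality then control these by $(\|\vec{\bu}-\vec{\Pi}_h\vec{\bu}\|_{\bH_\epsilon^h}+\|\vec{\bu}-\vec{I}_h^c\vec{\bu}\|_{\bH_\epsilon})\,\|\vec{\bchi}\|_{\bH_\epsilon}$, with the weights $\beta^{1/2}$ and $\gamma^{1/2}$ factoring transparently since they appear on both sides of the Cauchy--Schwarz.

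The off-diagonal form $a_2^h$ is the delicate piece. Since $\ell\le k$ gives $\nabla\chi_\ell\in\mathbb{P}_{\ell-1}^2\subset\mathbb{P}_{k-1}^2$, the $L^2$-projection property yields $a_2^h(\chi_\ell,\cdot)=a_2(\chi_\ell,\cdot)$ and, symmetrically, $a_2^h(\cdot,\chi_\ell)=a_2(\cdot,\chi_\ell)$ for any $\chi_\ell\in\mathbb{P}_\ell(K)$. The pressure-slot consistency $a_2(p,v)-a_2^h(p_I^c,v)$ then proceeds by inserting $\pg_\ell p$ exactly as above. The novelty appears in the displacement slot, where inserting $\pg_\ell u$ (rather than $\pd_k u$) is \emph{forced} because the polynomial consistency of $a_2^h$ requires degree $\ell$ in both arguments. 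This yields
\begin{equation*}
a_2(q,u)-a_2^h(q,u_I^c)=\alpha(\nabla q,\nabla(u-\pg_\ell u))_\Omega+a_2^h(q,\pg_\ell u - u_I^c),
\end{equation*}
and the hypothesis $\alpha\le 1\le\gamma$ furnishes $\alpha^{1/2}|q|_{1,\Omega}\le\gamma^{1/2}|q|_{1,\Omega}\le\|\vec{\bchi}\|_{\bH_\epsilon}$, so the first term is bounded by $\alpha^{1/2}|u-\pg_\ell u|_{1,h}\,\|\vec{\bchi}\|_{\bH_\epsilon}$, contributing the signature term of the estimate. The residual $a_2^h(q,\pg_\ell u - u_I^c)$ splits by triangle inequality into contributions already present in the interpolation and projection norms.

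Finally, by $L^2$-orthogonality of $\Pi_k$ and $\Pi_\ell$, the data residual reads $-(\tilde f-\Pi_k\tilde f,v-\Pi_k v)_\Omega-(\tilde g-\Pi_\ell\tilde g,q-\Pi_\ell q)_\Omega$; the bound~\eqref{PF} and its $H^1$ analogue give $\|v-\Pi_k v\|_K\lesssim h_K^2|v|_{2,K}$ and $\|q-\Pi_\ell q\|_K\lesssim h_K|q|_{1,K}$, producing $(\mathrm{osc}_2(\tilde f,\cT_h)+\mathrm{osc}_1(\tilde g,\cT_h))\|\vec{\bchi}\|_{\bH_\epsilon}$ after recalling that $\gamma\ge 1$ absorbs the $\gamma^{-1/2}$ lost on $|q|_{1,h}$. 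Dividing through by $\|\vec{\bchi}\|_{\bH_\epsilon}$ and adding $\|\vec{\bu}-\vec{I}_h^c\vec{\bu}\|_{\bH_\epsilon}$ via triangle inequality delivers the first inequality of the theorem. The second inequality follows from Proposition~\ref{prop:inter_vc}, Proposition~\ref{prop:poly} and the regularity estimate after Theorem~\ref{theo:iso}, each evaluated at order $\min\{k-1,s-2,\ell,r-1\}$. The main technical obstacle is keeping the bound robust as $\gamma\to\infty$: the choice of $\pg_\ell u$ over $\pd_k u$ in the coupling term is precisely what isolates the $\alpha^{1/2}$ factor that makes this possible.
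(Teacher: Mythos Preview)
Your proposal is correct and follows essentially the same approach as the paper: coercivity of $\cA_h^c$ on the discrete error $\vec{\bu}_I-\vec{\bu}_h$, splitting into diagonal, off-diagonal, and data contributions, and a final triangle inequality. The only notable difference is in the off-diagonal term: the paper expands $a_2-a_2^h$ directly in terms of the $L^2$-projections $\Pi_{\ell-1},\Pi_{k-1}$ and then cancels/reduces the four resulting pieces, whereas you invoke the elementwise polynomial consistency $a_2^h(\cdot,\chi_\ell)=a_2^\pw(\cdot,\chi_\ell)$ (and symmetrically) to insert $\pg_\ell u$ and $\pg_\ell p$; both routes isolate the same signature term $\alpha^{1/2}|u-\pg_\ell u|_{1,h}$ and are equivalent in substance.
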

\begin{proof}
We drop the superscript $c$ (denoting the conforming case) in the proof just for the sake of notational simplicity. Let $\vec{\be}_h:=(e_h^u,e_h^p)=(u_I-u_h,p_I-p_h)=\vec{\bu}_I-\vec{\bu}_h\in \bH_\epsilon^{h,c}$ for $\vec{\bu}_I=(u_I,p_I)$.
The coercivity of $\cA_h$ from Theorem~\ref{theo:iso_discrete} and the discrete problem \eqref{eq:operator_discrete} in the first step, and an elementary algebra in the second step lead to
\begin{align}
	\|\vec{\be}_h\|^2_{\bH_\epsilon}&\lesssim \cA_h(\vec{\bu}_I,\vec{\be}_h)-\cF_h(\vec{\be}_h)=(a_1^h(u_I-\pd_ku,e_h^u)+a_1^\pw(\pd_ku-u,e_h^u))\nonumber\\&\quad+(a_2(p,e_h^u) -a_2^h(p_I,e_h^u))+(a_3^h(p_I-\pg_\ell p,e_h^p)+a_3^\pw(\pg_\ell p-p,e_h^p))\nonumber\\&\quad+(a_2^h(e_h^p,u_I)-a_2(e_h^p,u))+((\tilde{f}-\tilde{f}_h,e_h^u)_\Omega+(\tilde{g}-\tilde{g}_h,e_h^p)_\Omega)\nonumber\\&\quad=:T_1+T_2+T_3+T_4+T_5.\label{eq:4.1}
\end{align}
The continuity of $a_1^h$ and $a_3^h$ from Theorem~\ref{theo:iso_discrete}, and the Cauchy--Schwarz inequality for $a_1^\pw$ and $a_3^\pw$ show 
\begin{align}
	T_1+T_3&\lesssim (\|u_I-\pd_ku\|_{\Omega}+\|u-\pd_ku\|_{\Omega})\|e_h^u\|_{\Omega}+ (|u_I-\pd_ku|_{2,h}\nonumber\\&\qquad+|u-\pd_ku|_{2,h}) |e_h^u|_{2,\Omega}+\beta^{1/2}(\|p_I-\pg_\ell p\|_{\Omega}+\|p-\pg_\ell p\|_{\Omega})\beta^{1/2}\|e_h^p\|_\Omega\nonumber\\&\qquad+ \gamma^{1/2}(\|p_I-\pg_\ell p\|_{1,h}+\|p-\pg_\ell p\|_{1,h})\gamma^{1/2}|e_h^p|_{1,h}\nonumber\\&\lesssim (\|\vec{\bu}-\vec{I}_h^c\vec{\bu}\|_{\bH_\epsilon}+\|\vec{\bu}-\vec{\Pi}_h \vec{\bu}\|_{\bH_\epsilon^h})\|\vec{\be}_h\|_{\bH_\epsilon}\nonumber\\&\lesssim h^{\min\{k-1,s-2,\ell,r-1\}}(|u|_{s,\Omega}+|p|_{r,
		\Omega})\|\vec{\be}_h\|_{\bH_\epsilon},\label{t1-t3}
\end{align}
with triangle inequalities in the second step, and Propositions~\ref{prop:poly}-\ref{prop:inter_vc} in the last step. Algebraic manipulations and the $L^2$-orthogonality of  $\Pi_{\ell-1}$ imply that 
\begin{align}
	\alpha^{-1}(T_2+T_4)&=(\nabla p-\Pi_{\ell-1}\nabla p_I,\nabla e_h^u)_\Omega+(\Pi_{\ell-1}\nabla p_I,(1-\Pi_{k-1})\nabla e_h^u)_\Omega\nonumber\\&\quad+(\Pi_{\ell-1}\nabla e_h^p,\Pi_{k-1}\nabla u_I-\nabla u)_\Omega+((\Pi_{\ell-1}-1)\nabla e_h^p,(1-\Pi_{\ell-1})\nabla u)_\Omega.\label{eq:4.3}
\end{align}
In addition, triangle inequalities and the $L^2$-orthogonality of $\Pi_{\ell-1}$ provide
\[
\|\nabla p-\Pi_{\ell-1}\nabla p_I\|_\Omega\leq |p-p_I|_{1,\Omega}+|p_I-\pg_\ell p|_{1,h}\lesssim |p-p_I|_{1,\Omega}+|p-\pg_\ell p|_{1,h}.
\]
The second term in \eqref{eq:4.3} vanishes because of the $L^2$-orthogonality of $\Pi_{k-1}$ and assumption $\ell\leq k$. Similarly, the third term in \eqref{eq:4.3} reduces to $(\Pi_{\ell-1}\nabla e_h^p,\Pi_{k-1}\nabla u_I-\nabla u)_\Omega=(\Pi_{\ell-1}\nabla e_h^p,\nabla u_I-\nabla u)_\Omega$. The Cauchy--Schwarz inequality in combination with  the previous bounds in \eqref{eq:4.3} and $\alpha^{1/2}\leq 1 \leq \gamma^{1/2}$ result in 
	\begin{align*}
		T_2+T_4&\lesssim \gamma^{1/2}(|p-p_I|_{1,\Omega}+|p-\pg_\ell p|_{1,h})|e_h^u|_{1,\Omega}\nonumber\\&\quad +\alpha^{1/2}(|u-u_I|_{1,\Omega}+|u-\pg_\ell u|_{1,h})\gamma^{1/2}|e_h^p|_{1,\Omega}\nonumber\\&\lesssim (\|\vec{\bu}-\vec{I}_h^c\vec{\bu}\|_{\bH_\epsilon}+\|\vec{\bu}-\vec{\Pi}_h \vec{\bu}\|_{\bH_\epsilon^h}+\alpha^{1/2}|u-\pg_\ell u|_{1,h}) (|e_h^u|_{2,\Omega}+\gamma^{1/2}|e_h^p|_{1,\Omega})\nonumber\\&\lesssim h^{\min\{\ell,r-1,k,s-1\}}(|e_h^u|_{2,\Omega}+\gamma^{1/2}|e_h^p|_{1,\Omega}),
	\end{align*}
	where $|u-u_I|_{1,\Omega}\lesssim |u-u_I|_{2,\Omega}$ and $|e_h^u|_{1,\Omega}\lesssim |e_h^u|_{2,\Omega}$ from \eqref{norm-equiv}, and Propositions~\ref{prop:poly}-\ref{prop:inter_vc} were used for the last two inequalities. The $L^2$-orthogonality of $\Pi_k$ and $\Pi_\ell$ together with Proposition~\ref{prop:poly} and $\gamma\geq 1 $ allow us to assert  that
\begin{align}
	T_5&=(h_{\cT_h}^2(\tilde{f}-\Pi_k\tilde{f}),h_{\cT_h}^{-2}(1-\Pi_k)e_h^u)_\Omega+(h_{\cT_h}(\tilde{g}-\Pi_\ell\tilde{g}),h_{\cT_h}^{-1}(1-\Pi_\ell)e_h^p)_\Omega\nonumber\\&\lesssim \mathrm{osc}_2(\tilde{f},\cT_h)|e_h^u|_{2,\Omega}+\mathrm{osc}_1(\tilde{g},\cT_h)|e_h^p|_{1,\Omega}\nonumber\\&\lesssim h^{\min\{k+3,s-2,\ell+2,r-1\}}(\|\tilde{f}\|_{s-4,\Omega}+\|\tilde{g}\|_{r-2,\Omega})(|e_h^u|_{2,\Omega}+\gamma^{1/2}|e_h^p|_{1,\Omega}).\label{t5}
\end{align}
The estimates \eqref{t1-t3}-\eqref{t5} in \eqref{eq:4.1} show that \[\|\vec{\be}_h\|_{\bH_\epsilon}\lesssim h^{\min\{k-1,s-2,\ell,r-1\}}(|u|_{s,\Omega}+|p|_{r,\Omega}+\|\tilde{f}\|_{s-4,\Omega}+\|\tilde{g}\|_{r-2,\Omega}).\] This and Proposition~\ref{prop:inter_vc} in the triangle inequality $\|\vec{\bu}-\vec{\bu}_h^c\|_{\bH_\epsilon}\leq \|\vec{\bu}-\vec{\bu}_I\|_{\bH_\epsilon}+\|\vec{\be}_h\|_{\bH_\epsilon}$ followed by the regularity estimates conclude the proof of the theorem.
\end{proof}
\section{Error analysis for nonconforming VEM}\label{sec:error-nc} 
Since the nonconforming discrete spaces $\vhnc$ and $\qhnc$ need not be subsets of continuous spaces $V$ and $Q$, this section   explains the different constructions (at least two) of  conforming companion operators which connect nonconforming VE spaces to  continuous Sobolev spaces. The two crucial ideas in the design are 
\begin{itemize}
\item first to map a nonconforming VE space to a conforming VE space of one degree higher, and 
\item second to modify the linear operator constructed in the first step
through standard bubble-function techniques to achieve additional orthogonal properties (in particular, $L^2$-orthogonality).
\end{itemize}
\subsection{Construction of companion operators}\label{subsec:companion}
Let $\mathrm{dof}_i^{\ell,\mathrm{c}}$ for $i=1,\dots,\mathrm{N}_1^{\ell,\mathrm{c}}$ and $\mathrm{dof}_j^{\ell,\mathrm{nc}}$ for $j=1,\dots,\mathrm{N}_1^{\ell,\mathrm{nc}}$ be the linear functionals associated with DoFs of the VE spaces $\qhc$ and $\qhnc$ of dimensions $\mathrm{N}_1^{\ell,\mathrm{c}}$ and $\mathrm{N}_1^{\ell,\mathrm{n      c}}$ for $\ell\geq 1$. Let $\mathrm{dof}_i^{k,\mathrm{c}}$ for $i=1,\dots,\mathrm{N}_2^{k,\mathrm{c}}$ and $\mathrm{dof}_j^{k,\rm{nc}}$ for $j=1,\dots,\mathrm{N}_2^{k,\mathrm{nc}}$ be the linear functionals associated with DoFs of the VE spaces $\vhc$ and $\vhnc$ of dimensions $\mathrm{N}_2^{k,\mathrm{c}}$ and $\mathrm{N}_2^{k,\mathrm{nc}}$ for $k\geq 2$. 
\begin{theorem}\label{thm:J1}
There exists a linear operator  $J_1:\vhnc\to V_h^{k+1,\mathrm{c}}$ satisfying the following properties:
\begin{enumerate}[before=\leavevmode,label=\upshape(\alph*),ref=\thetheorem (\alph*)]
	\item\label{4.1.a} $\mathrm{dof}_j^{k,\mathrm{nc}} (J_1v_h) = \mathrm{dof}_j^{k,\mathrm{nc}} (v_h)$ for all $j=1,\dots,{\mathrm{N}_2^{k,\mathrm{nc}}}$,
	\item\label{4.1.b} $a^{\pw}(v_h-J_1v_h,\chi)=0$ for all $\chi\in\p_k(\cT_h)$,
	\item\label{4.1.b'}$\nabla(v_h-J_1v_h)\perp (\p_{k-3}(\cT_h))^2$ in $(L^2(\Omega))^2$ for $k\geq 3$,
	\item\label{4.1.c} $\displaystyle \sum_{j=0}^2 h^{j-2}|v_h-J_1v_h|_{j,h}\lesssim \inf_{\chi\in \p_k(\cT_h)}|v_h-\chi|_{2,h}+\inf_{v\in V}|v_h-v|_{2,h}.$
\end{enumerate}
\end{theorem}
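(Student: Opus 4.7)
The plan is to \emph{construct} $J_1$ by prescribing its degrees of freedom in $V_h^{k+1,\mathrm{c}}$ from data of $v_h\in\vhnc$, to \emph{adjust the interior moments} (which are free parameters in the higher-degree target space) to enforce the orthogonalities in (b) and (b'), and then to \emph{control} $|v_h-J_1v_h|_{2,h}$ by nonconformity indicators that are themselves dominated by best-approximation quantities in $V$ and $\mathbb{P}_k(\cT_h)$.

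First I would define $J_1 v_h\in V_h^{k+1,\mathrm{c}}$ by its DoFs (see the $k{+}1$ row of Table~\ref{table1}): (i) vertex values are set equal to $v_h(V_i)$, which is single-valued by the global definition of $\vhnc$; (ii) scaled vertex gradients are averages of $\nabla(v_h|_K)(V_i)$ over polygons meeting at $V_i$, with the appropriate modification on $\Gamma^c$ and $\Gamma^s$; (iii) the edge moments $\int_e\partial_\nn(J_1v_h)\chi\,\mathrm ds$ against $\mathbb{M}_{k-2}(e)$ are copied from $v_h$, which is unambiguous because the jump condition $\int_e[\partial_\nn v_h]\chi\,\mathrm ds=0$ holds in $\vhnc$; (iv) the edge moments $\int_e(J_1v_h)\chi\,\mathrm ds$ against $\mathbb{M}_{k-3}(e)$ are similarly copied from $v_h$; (v) the interior moments of $J_1 v_h$ against $\mathbb{M}_{k-3}(K)$ are left as free parameters. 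Property~(a) is then immediate from the matching of the boundary/nodal DoFs in (i), (iii), (iv).

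For (b), integration by parts twice on each $K$ gives
\begin{equation*}
a^K(v_h-J_1v_h,\chi)=(v_h-J_1v_h,\Delta^2\chi)_K+\text{(boundary terms involving }\partial_\nn(\cdot)\text{ and values)},
\end{equation*}
for $\chi\in\mathbb{P}_k(K)$. The boundary terms vanish because $\partial_\nn\Delta\chi\in\mathbb{P}_{k-3}(e)$ and $\partial_{\nn\nn}\chi\in\mathbb{P}_{k-2}(e)$ are tested against $v_h-J_1v_h$ and $\partial_\nn(v_h-J_1v_h)$, which are already matched to total degrees $k$ and $k-2$ respectively by (iii)--(iv). Since $\Delta^2\chi\in\mathbb{P}_{k-4}(K)$, the interior freedom in (v) covers $\mathbb{M}_{k-4}(K)$ and can be used to enforce $(v_h-J_1v_h,q)_K=0$ for all $q\in\mathbb{P}_{k-4}(K)$; this yields (b). For (b') I would integrate by parts once, reducing $(\nabla(v_h-J_1v_h),\vec{\chi})_K$ for $\vec{\chi}\in(\mathbb{P}_{k-3}(K))^2$ to $-(v_h-J_1v_h,\vdiv\vec\chi)_K$ plus boundary terms; the boundary terms vanish by the edge value matches (iv), and $\vdiv\vec\chi\in\mathbb{P}_{k-4}(K)$ is killed by the condition just imposed. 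The remaining $\mathbb{M}^{*}_{k-3}(K)\setminus\mathbb{M}_{k-4}(K)$ directions in (v) cost nothing here, so (b) and (b') are mutually compatible.

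The main obstacle is (c). The key observation is that since $v_h-J_1v_h$ shares almost all boundary/nodal DoFs with zero, it lies in a subspace whose local norm is governed by a small set of nonconformity indicators. By a scaling argument and the DoF norm-equivalence on a shape-regular polygon (assumptions \ref{M1}--\ref{M2}), I obtain
\begin{equation*}
|v_h-J_1v_h|_{2,K}^{2}\lesssim \sum_{V\in\mathcal V\cap\overline K} h_V^{2}\,|[\nabla v_h](V)|^{2}+\sum_{e\subset\partial K}\bigl(h_e^{-3}\|[v_h]\|_e^{2}+h_e^{-1}\|[\partial_\nn v_h]\|_e^{2}\bigr),
\end{equation*}
where the jumps account for the discrepancy between the copied nonconforming DoFs and the conforming averages. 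Each of these indicators is then bounded in the standard medius-analysis way: for any $v\in V$ the jumps of $v$ vanish, so $\|[v_h]\|_e=\|[v_h-v]\|_e$, and a trace/inverse inequality on conforming VE functions plus Proposition~\ref{prop:poly} gives a bound by $h_e^{3/2}\,|v_h-v|_{2,\omega_e}$; analogously for $[\partial_\nn v_h]$ and for the vertex gradient jumps using a piecewise $\mathbb{P}_k$ test function $\chi$. The lower-order terms $h^{j-2}|v_h-J_1v_h|_{j,h}$ for $j=0,1$ follow from a Poincaré-type inequality (vanishing vertex and edge averages on subsets of $\partial K$) combined with the just-proven $j=2$ bound. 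Summing over $K$ and optimising over $v\in V$ and $\chi\in\mathbb{P}_k(\cT_h)$ yields (c).
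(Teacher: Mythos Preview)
Your construction and the proofs of (a), (b), (b') are essentially correct and match the paper's strategy (the integration-by-parts formula for (b) also produces corner terms $\sum_j[M_{\nn\btau}\chi]_{z_j}(v_h-J_1v_h)(z_j)$, which you omit but which vanish by your vertex-value matching (i)). Two choices you leave vague or make differently from the paper: for the vertex-gradient DoFs the paper uses averages of $\nabla\pd_k v_h|_K(z)$ rather than $\nabla(v_h|_K)(z)$ (the latter is not obviously well-defined pointwise for VE functions, which are only in $H^2(K)$), and the unused $\M^*_{k-3}(K)$ interior moments are not ``free'' but must be fixed for $J_1$ to be a well-defined operator; the paper sets $\dashint_K J_1v_h\,\chi\dx=\dashint_K \pd_k v_h\,\chi\dx$. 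Neither affects (a)--(b'), but both matter for (c).

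The genuine gap is in (c). Your displayed bound invokes a ``DoF norm-equivalence'' for $v_h-J_1v_h$, but $v_h\in\vhnc(K)$ while $J_1v_h\in V_h^{k+1,\mathrm c}(K)$, and norm equivalences of the type $|w|_{2,K}\simeq h_K^{-1}\|\mathrm{Dof}(w)\|_{\ell^2}$ are established for $w$ in a \emph{single} VE space; there is no ready-made scaling argument on the sum $\vhnc(K)+V_h^{k+1,\mathrm c}(K)$. The paper circumvents this by inserting $\pd_k v_h$ (a polynomial, hence belonging to $V_h^{k+1,\mathrm c}(K)$): one writes $|v_h-J_1v_h|_{2,K}\le |v_h-\pd_k v_h|_{2,K}+|\pd_k v_h-J_1v_h|_{2,K}$, controls the first term by the projection property (already yielding $\inf_\chi|v_h-\chi|_{2,K}$), and applies the genuine DoF norm-equivalence on $V_h^{k+1,\mathrm c}(K)$ to the second. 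This is also where the paper's vertex-gradient choice pays off: the only nonzero DoFs of $\pd_k v_h-J_1v_h$ are then differences of $\nabla\pd_k v_h|_K(z)$ across neighbouring elements, and these jumps of a \emph{piecewise polynomial} are bounded by $|v_h-\pd_k v_h|_{2,\omega_e}+\inf_{v\in V}|v_h-v|_{2,\omega_e}$ via trace and Poincar\'e--Friedrichs inequalities (see \eqref{4.3}--\eqref{4.5}). Your direct route would need either a new norm-equivalence on the sum space or pointwise control of $\nabla v_h$ at vertices, neither of which is available.
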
 
\noindent {\it{Construction of $J_1$}}.
First we observe that DoFs of $\vhnc$ is a subset of DoFs of $V_h^{k+1,c}$.  Next we define a linear operator $J_1:\vhnc\to V_h^{k+1,c}$ through DoFs of $V_h^{k+1,c}$, for $v_h\in \vhnc$, by
\begin{align*}
\mathrm{dof}_j^{k,\mathrm{nc}} (J_1v_h) &= \mathrm{dof}_j^{k,\mathrm{nc}} (v_h)\quad\forall\;j=1,\dots,\mathrm{N}_2^{k,\mathrm{nc}},\\
\nabla J_1v_h(z) &= \frac{1}{|\cT_z|}\sum_{K\in\cT_z}\nabla\pd_kv_h|_K(z)\quad\forall\;z\in\V^i,\\
\dashint_K J_1v_h\chi\dx&=\dashint_K\pd_kv_h\chi\dx\quad\forall\;\chi\in\M^*_{k-3}(K),
\end{align*}
where the set $\cT_z:=\{K\in\cT_h : z\in K\}$ of cardinality $|\cT_z|$ contains the neighbouring polygons $K$ sharing the vertex $z$. We assign $\nabla J_1v_h(z)=\cero$ for the boundary vertices $z\in\V^s$ if $z$ is a corner (the angle at $z$ is not equal to $0,\pi,2\pi$) and for all $z\in\V^c$. If the angle at $z\in\V^s$ is equal to $0,\pi,2\pi$, then we assign
\[\partial_{\bt}(J_1v_h)(z)=0\quad\text{and}\quad\partial_{\nn}(J_1v_h)(z)=\frac{1}{|\cT_z|}\sum_{K\in\cT_z}\partial_{\nn}(\pd_kv_h)|_K(z).\]
\begin{proof}[Proof of Theorem \ref{4.1.a}]
This is an immediate consequence of the definition of $J_1$.
\end{proof}
\begin{proof}[Proof of Theorem~\ref{4.1.b}]
Let $\chi\in\p_k(K)$ and set the notation $ T(\chi):=\partial_{\nn}(\Delta\chi+\partial_{\btau\btau}\chi)$, and $[M_{\nn\btau}(\chi)]_{z_j}:=\partial_{\nn\tau}(\chi)|_{e_{j-1}}(z_j)-\partial_{\nn\tau}(\chi)|_{e_{j}}(z_j)$ for $j=1,\dots,N_K$ with $e_0=e_{N_K}$. Since $\chi\in H^4(K)$ and $v_h-J_1v_h\in H^2(K)$, an integration by parts  leads to 
\begin{align*}
	a^K(v_h-J_1v_h,\chi)&= \int_K \Delta^2\chi (v_h-J_1v_h)\dx+ \int_{\partial K}\partial_{\nn\nn}\chi\partial_{\nn}(v_h-J_1v_h)\ds\\&\qquad-\int_{\partial K}T(\chi)(v_h-J_1v_h)\ds+\sum_{j=1}^{N_K}[M_{\nn\btau}\chi]_{z_j}(v_h-J_1v_h)(z_j)=0,
\end{align*}
with part \ref{4.1.a} being used in the last step. This  holds for any $K\in\cT_h$ and concludes the proof of Theorem \ref{4.1.b}. 
\end{proof}
\begin{proof}[Proof of Theorem~\ref{4.1.b'}]
For any $v_h\in \vhnc, \chi\in\p_{k-2}(K)$ with  $k\geq 3$ and $K\in\cT_h$, an integration by parts and  Theorem~\ref{4.1.a}  show that
\[
(\nabla(v_h-J_1v_h),\nabla \chi)_K=-(v_h-J_1v_h,\Delta\chi)_K+(v_h-J_1v_h,\partial_{\nn}\chi)_{\partial K}=0.
\]	
This proves \ref{4.1.b'}.
\end{proof}
\begin{proof}[Proof of Theorem~\ref{4.1.c}]
Since $(\pd_kv_h-J_1v_h)|_K\in V_h^{k+1,\rm{c}}(K)$, the norm equivalence found in, e.g., \cite[Lemma~3.6]{huang21} shows that 
\begin{align}
	|\pd_kv_h-J_1v_h|_{2,K}\simeq h_K^{-1}\|\mathrm{Dof}^{k+1,\mathrm{c}}(\pd_kv_h-J_1v_h)\|_{\ell^2},\label{4.1}
\end{align}
for the vector $\mathrm{Dof}^{k+1,\mathrm{c}}$ with arguments as the local DoFs of $V_h^{k+1,\rm{c}}(K)$.
Let $z$ be an interior vertex in $\V^i\cap\V_K$ belonging to an edge $e\in\e_K$. The equality $J_1v_h(z)=v_h(z)$ from Theorem \ref{4.1.a} and the inverse estimate for polynomials imply
\begin{align*}
	&|(\pd_kv_h-J_1v_h)|_K(z)|\leq\|(\pd_kv_h-v_h)\|_{\infty,e}\lesssim h_e^{-1/2}\|v_h-\pd_kv_h\|_e\nonumber\\&\qquad\lesssim h_e^{-1}\|v_h-\pd_kv_h\|_K+|v_h-\pd_kv_h|_{1,K}\lesssim h_K|v_h-\pd_kv_h|_{2,K}.
\end{align*}
The  third step follows from the trace inequality, and the last step from \eqref{PF} and \ref{M2}. Let $z$ be an interior vertex in $\V^i\cap\V_K$ or a boundary vertex in $\V^s\cap\V_K$ with angle at $z$ equal to $\pi$,  and  polygons $K_1=K,\dots,K_{|\cT_z|}$ share the node $z$. Suppose $(\pd_kv_h)_i=\pd_kv_h|_{K_i},$ and $K_i$ and $K_{i+1}$ are two neighbouring polygons. Then
\begin{align}
	\nabla(\pd_kv_h-J_1v_h)_1(z)&=\frac{1}{|\cT_z|}\sum_{j=2}^{|\cT_z|}(\nabla(\pd_kv_h)_1-\nabla(\pd_kv_h)_j)(z)\nonumber\\&=\frac{1}{|\cT_z|}\sum_{j=2}^{|\cT_z|}\sum_{i=1}^{j-1}(\nabla(\pd_kv_h)_i-\nabla(\pd_kv_h)_{i+1})(z).\label{4.3}
\end{align}
A consequence of the mesh regularity assumptions \ref{M1}-\ref{M2} is that $|\cT_z|$ is uniformly bounded for any $z\in\mathcal{V}$. Hence it suffices to bound the term $(\nabla(\pd_kv_h)_1-\nabla(\pd_kv_h)_{2})(z)$ for $z\in e$ and an edge $e\in K_1\cap K_2$. In addition, the inverse estimate for polynomials leads to
\begin{align}
|(\nabla(\pd_kv_h)_1-\nabla(\pd_kv_h)_{2})(z)|&\leq \|\nabla(\pd_kv_h)_1-\nabla(\pd_kv_h)_{2}\|_{\infty,e}\nonumber\\&\lesssim h_e^{-1/2}\|[\nabla\pd_kv_h]_e\|_e.\label{4.4}
\end{align}
Let $v\in V$ be an arbitrary function and $a_e:=\dashint_e\nabla(v-v_h)\ds$. Since $a_e$ is uniquely defined from the definition of $v_h\in \vhnc$, rewrite $h_e^{-1/2}\|[\nabla\pd_kv_h]_e\|_e= h_e^{-1/2}\|[\nabla\pd_kv_h-\nabla v+a_e]_e\|_e$. Let $\omega_e$ denote the edge patch of $e$. Then the trace inequality and the triangle inequality show
\begin{align*}
	&h_e^{-1/2}\|[\nabla\pd_kv_h-\nabla v+a_e]_e\|_e\lesssim h_e^{-1}\|\nabla\pd_kv_h-\nabla v+a_e\|_{\omega_e}+|\pd_kv_h-v|_{2,\omega_e}\\&\lesssim h_e^{-1}(\|\pd_kv_h-v_h\|_{1,\omega_e}+ \|
	\nabla(v_h-v)+a_e\|_{\omega_e})+|\pd_kv_h-v_h|_{2,\omega_e}+|v_h-v|_{2,\omega_e}.
\end{align*}
Since $\dashint_e\nabla(v_h-v)+a_e\ds=0$, the Poincar\'e--Friedrichs inequality and \ref{M2} imply $\|
\nabla(v_h-v)+a_e\|_{\omega_e}\lesssim h_e|v_h-v|_{2,\omega_e}$. This and \eqref{PF} in the above displayed estimate provide
\begin{align}h_e^{-1/2}\|[\nabla\pd_kv_h]_e\|_e\lesssim |\pd_kv_h-v_h|_{2,\omega_e}+|v_h-v|_{2,\omega_e}.\label{4.5}
\end{align}
The combination \eqref{4.3}-\eqref{4.5} results in
\[
|h_z\nabla(\pd_kv_h-J_1v_h)|_K(z)|\lesssim h_K(|v_h-\pd_kv_h|_{2,\omega_e}+|v-v_h|_{2,\omega_e}).
\]
Theorem \ref{4.1.a}, the Cauchy--Schwarz inequality,  the trace inequality lead for any $\chi\in\M_{k-2}(e)$ and $e\in\e_K\setminus\e^c$ to
\begin{align*}
	&\int_e\partial_{\nn}(\pd_kv_h-J_1v_h)\chi\ds\leq h_e^{1/2}\|\partial_{\nn}(\pd_kv_h-v_h)\|_e\\&\qquad\lesssim |v_h-\pd_kv_h|_{1,\omega_e}+h_e|v_h-\pd_kv_h|_{2,\omega_e}\lesssim h_e|v_h-\pd_kv_h|_{2,\omega_e},
\end{align*}
with \eqref{PF} in the end. Analogously we can prove for any $\chi\in\M_{k-3}(e)$ and $e\in\e_K\cap\e^i$ that
\[
\dashint_e(\pd_kv_h-J_1v_h)\chi\ds\lesssim h_e|v_h-\pd_kv_h|_{2,\omega_e}.
\]
Again Theorem \ref{4.1.a}, the Cauchy--Schwarz inequality, and \eqref{PF} show for any $\chi\in\M_{k-4}(K)$ that
\[
\dashint_K(\pd_kv_h-J_1v_h)\chi\dx\lesssim h_K|v_h-\pd_kv_h|_{2,K},
\]
and $\dashint_K(\pd_kv_h-J_1v_h)\chi\dx=0$ for $\chi\in\M^*_{k-3}(K)$. The definition of $\pd_k$ from \eqref{pd1} implies $\|v_h-\pd_kv_h\|_{2,h}\leq\inf_{\chi\in\p_k(\cT_h)}|v_h-\chi|_{2,h}$.  The previous  estimates in \eqref{4.1} prove that \[|\pd_kv_h-J_1v_h|_{2,h}\lesssim \inf_{\chi\in\p_k(\cT_h)}|v_h-\chi|_{2,h}+\inf_{v\in V}|v_h-v|_{2,h}.\]
Hence the triangle inequality $|v_h-J_1v_h|_{2,h}\leq|v_h-\pd_kv_h|_{2,h}+|\pd_kv_h-J_1v_h|_{2,h}$ and \eqref{PF} prove the estimate  in  Theorem \ref{4.1.c} for the term $|v_h-J_1v_h|_{2,h}$.  The Poincar\'e--Friedrichs inequality implies $\sum_{j=0}^{1}h^{j-2}|v_h-J_1v_h|_{j,h}\lesssim |v_h-J_1v_h|_{2,h}$.
\end{proof}
The following theorem establishes the construction of the second companion operator which will be used in the sequel.

\begin{theorem}\label{thm:J2}
There exists a linear operator  $J_2:\vhnc\to V$ such that it satisfies   Theorem~\ref{4.1.a}-\ref{4.1.c} and in addition the $L^2$-orthogonality property. In particular,
\begin{enumerate}[before=\leavevmode,label=\upshape(\alph*),ref=\thetheorem (\alph*)]
	\item\label{4.2.a} $\mathrm{dof}_j^{k,\mathrm{nc}} (J_2v_h) = \mathrm{dof}_j^{k,\mathrm{nc}} (v_h)$ for all $j=1,\dots,{\mathrm{N}_{2}^{k,\mathrm{nc}}}$,
	\item\label{4.2.b} $a_{\rm{pw}}(v_h-J_2v_h,\chi)=0$ for all $\chi\in\p_k(\cT_h)$,
	\item\label{4.2.c'} $\nabla(v_h-J_2v_h)\perp (\p_{k-3}(\cT_h))^2$ in $(L^2(\Omega))^2$ for $k\geq 3$,
	\item\label{4.2.c} $v_h-J_2v_h\perp \p_k(\Omega)$ in $L^2(\Omega)$,
	\item\label{4.2.d} $\displaystyle \sum_{j=0}^2 h^{j-2}|v_h-J_2v_h|_{j,h}\lesssim \inf_{\chi\in \p_k(\cT_h)}|v_h-\chi|_{2,\pw}+\inf_{v\in V}|v_h-v|_{2,h}.$
\end{enumerate}
\end{theorem}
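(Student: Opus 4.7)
The plan is to define $J_2 v_h := J_1 v_h + b_h$, where $b_h = \sum_{K\in\cT_h} b_K$ is a bubble-type correction with each $b_K \in H^2_0(K)$ extended by zero outside $K$. Any such correction automatically lies in $V$ and preserves every vertex DoF, every edge moment against $\M_{k-2}$, and every edge moment against $\M_{k-3}$ of $\vhnc$, because $b_K$ and $\partial_{\nn}b_K$ vanish on $\partial K$. The only remaining $\vhnc$-DoFs to preserve are the interior moments $\dashint_K\cdot\,\chi\dx$ for $\chi\in\M_{k-4}(K)$, and the sole new property to enforce is the $L^2$-orthogonality \ref{4.2.c}.

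The correction $b_K$ will be sought in the finite-dimensional space $B_K\,\p_k(K)$, where $B_K\in H^2_0(K)$ is a fixed polygonal bubble that is strictly positive in the interior of $K$ (built, if necessary, on a shape-regular sub-triangulation of $K$ under \ref{M1}--\ref{M2}). Writing $b_K = B_K\psi_K$, I would determine $\psi_K\in\p_k(K)$ by the Galerkin-like identity
\begin{align*}
(B_K\psi_K,\chi)_K = (v_h - J_1 v_h,\chi)_K \qquad \forall\,\chi\in\p_k(K).
\end{align*}
The bilinear form $(\psi,\chi)\mapsto (B_K\psi,\chi)_K$ is symmetric and positive definite on $\p_k(K)$, so $\psi_K$ exists and is unique. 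Summing this identity over $K$ gives the element-wise (and hence global) orthogonality $v_h - J_2 v_h \perp \p_k(\Omega)$ in $L^2(\Omega)$, which is \ref{4.2.c}. Property \ref{4.2.a} follows since Theorem~\ref{4.1.a} together with the internal DoF $(\mathbb{D}4^{\star})$ of $\vhnc$ forces $(v_h-J_1v_h,\chi)_K = 0$ for $\chi\in\p_{k-4}(K)$, whence $(b_K,\chi)_K = 0$ for $\chi\in\p_{k-4}(K)$ and the interior moments of $b_K$ against $\M_{k-4}(K)$ vanish. Properties \ref{4.2.b} and \ref{4.2.c'} then follow from Theorem~\ref{4.1.b}--\ref{4.1.b'} and two integrations by parts: since $b_K = \partial_{\nn}b_K = 0$ on $\partial K$, one finds $a^K(b_K,\chi) = (b_K,\Delta^2\chi)_K$ and $(\nabla b_K,\nabla\chi)_K = -(b_K,\Delta\chi)_K$, both of which vanish because $\Delta^2\chi\in\p_{k-4}(K)$ for $\chi\in\p_k(K)$ and $\Delta\chi\in\p_{k-4}(K)$ for $\chi\in\p_{k-3}(K)$.

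The main technical step is the quantitative estimate \ref{4.2.d}. Testing the defining identity with $\chi = \psi_K$ and invoking the norm equivalence $(B_K\psi,\psi)_K \simeq \|\psi\|_K^2$ on $\p_k(K)$, established by a scaling argument to a reference polygon and uniform in $h_K$ under \ref{M1}--\ref{M2}, will yield $\|\psi_K\|_K \lesssim \|v_h - J_1 v_h\|_K$. An inverse estimate on the finite-dimensional space $B_K\p_k(K)$ then gives $|b_K|_{2,K} \lesssim h_K^{-2}\|v_h - J_1 v_h\|_K$, and the lower-order seminorms of $b_K$ are controlled by the Poincar\'e--Friedrichs inequality on $H^2_0(K)$. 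Summing over $K$, combining with Theorem~\ref{4.1.c} to bound $\|v_h - J_1 v_h\|_\Omega$, and applying a final triangle inequality against Theorem~\ref{4.1.c} delivers \ref{4.2.d} for $J_2 v_h$. The delicate point will be proving the uniform-in-$h$ scaling and norm equivalence on general star-shaped polygons; this is precisely why $B_K$ is most safely constructed on a shape-regular sub-triangulation of $K$, so that standard bubble estimates on triangles can be invoked and assembled.
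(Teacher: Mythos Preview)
Your construction and proof are essentially identical to the paper's: both define $J_2 v_h = J_1 v_h + b_K\psi_K$ with $\psi_K\in\p_k(K)$ determined by the weighted Galerkin identity $(b_K\psi_K,\chi)_K=(v_h-J_1v_h,\chi)_K$ for all $\chi\in\p_k(K)$, and both derive the estimate \ref{4.2.d} from the norm equivalence $(b_K\psi,\psi)_K\simeq\|\psi\|_K^2$ on $\p_k(K)$ together with the inverse-type bound $|b_K\psi_K|_{2,K}\lesssim h_K^{-2}\|\psi_K\|_K$ and Theorem~\ref{4.1.c}. The only cosmetic difference is that the paper verifies \ref{4.2.b}--\ref{4.2.c'} by invoking \ref{4.2.a} and repeating verbatim the integration-by-parts argument from the proof of Theorem~\ref{4.1.b}--\ref{4.1.b'}, whereas you split $v_h-J_2v_h=(v_h-J_1v_h)-b_K\psi_K$ and treat the bubble contribution separately; both routes are equivalent.
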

\noindent\textit{Construction of $J_2$}. Let $b_K\in H^2_0(K)$ be a bubble-function  supported in $K$ and $v_K\in\p_k(K)$ be the Riesz representative of the linear functional $\p_k(K)\to\mathbb{R},$ defined by, $w_k\mapsto(v_h-J_1v_h,w_k)_K$, for $w_k\in\p_k(K)$ in the Hilbert space $\p_k(K)$ endowed with the weighted scalar product $(b_K\bullet,\bullet)_K$. Given $v_h\in\vhnc$,  the function $\tilde{v}_h\in\p_k(\cT_h)$ with $\tilde{v}_h|_K:=v_K$  and the bubble-function $b_{h}|_K:=b_K\in H^2_0(\Omega)$ satisfy
\begin{align}
(b_h\tilde{v}_h,w_k)_\Omega=(v_h-J_1v_h,w_k)_\Omega\qquad\forall\;w_k\in\p_k(\cT_h),\label{4.10}
\end{align}
and define
\begin{align}
J_2v_h:=J_1v_h+b_h\tilde{v}_h\in V.\label{def:J2}
\end{align}
\begin{proof}[Proof of Theorem \ref{4.2.a}]
Since $b_K=0=\partial_{\nn}(b_K)$ on $\partial K$ for any $K\in \cT_h$, there holds, for any $v_h\in\vhnc$, 
\begin{align*}
	J_2v_h(z)&=J_1v_h(z)=v_h(z) &\text{for any }\;z\in\mathcal{V},\\
	\int_e\partial_{\nn}(J_2v_h)\chi\ds&=\int_e\partial_{\nn}(J_1v_h)\chi\ds=\int_e\partial_{\nn}(v_h)\chi\ds &\text{for}\;\chi\in\M_{k-2}(e)\;\text{and}\;e\in\mathcal{E},\\
	\dashint_eJ_2(v_h)\chi\ds&=\dashint_eJ_1(v_h)\chi\ds=\dashint_e v_h\chi\ds&\text{for}\;\chi\in\M_{k-3}(e)\;\text{and}\;e\in\mathcal{E}.
\end{align*}
For $\chi\in\mathcal{M}_{k-4}(K)$ and $K\in\cT_h$, the definition \eqref{def:J2} of $J_2$ and \eqref{4.10} show $\dashint_KJ_2v_h\chi\dx=\dashint_K(J_1v_h+b_Kv_K)\chi\dx=\dashint_Kv_h\chi\dx$. This concludes the proof of Theorem \ref{4.2.a}.
\end{proof}
\begin{proof}[Proof of Theorem \ref{4.2.b}-\ref{4.2.c'}]
This results from Theorem \ref{4.2.a} and it follows as in the proof of Theorem~\ref{4.1.b}-\ref{4.1.b'}.
\end{proof}
\begin{proof}[Proof of Theorem \ref{4.2.c}]
This is an immediate consequence of the definition \eqref{def:J2} of $J_2$ and 
\eqref{4.10}.
\end{proof}
\begin{proof}[Proof of Theorem \ref{4.2.d}]
The Poincar\'e--Friedrichs inequality implies $\sum_{j=0}^1h^{j-2}|v_h-J_2v_h|_{j,h}\lesssim|v_h-J_2v_h|_{2,h}$. Hence it remains to bound the term $|v_h-J_2v_h|_{2,h}$. The  triangle inequality and \eqref{def:J2} lead to
\begin{align}
	|v_h-J_2v_h|_{2,h}\leq |v_h-J_1v_h|_{2,h}+|b_h\tilde{v}_h|_{2,h}.\label{4.12}
\end{align}
For any $\chi\in\p_k(K)$ and $K\in\cT_h$, there exist  inverse estimates 
\begin{align}
	\|\chi\|^2_{K}\lesssim (b_K,\chi^2)_{K}\lesssim \|\chi\|^2_{K}\quad\text{and}\quad
	\|\chi\|_{K}\lesssim&\sum_{m=0}^2 h_K^m|b_K\chi|_{m,K}\lesssim \|\chi\|_{K}.\label{bubble-inverse}
\end{align}
This implies \begin{align}|b_K v_K|_{2,K}\lesssim h_K^{-2}\|v_K\|_K.\label{4.14}\end{align} The first inequality in \eqref{bubble-inverse}, and \eqref{4.10} with  $w_k=v_K\in\p_k(K)$ result in
\begin{align*}
	\|v_K\|_K^2\lesssim(b_Kv_K,v_K)_K=(v_h-J_1v_h,v_K)_K.
\end{align*}
Hence $\|v_K\|_K\lesssim \|v_h-J_1v_h\|_K$. This, the estimates \eqref{4.12} and \eqref{4.14}, and Theorem~\ref{4.1.c} conclude the proof of Theorem \ref{4.2.d}. 
\end{proof}
\begin{corollary}\label{cor:disc-norm-equiv}
	The piecewise $H^2$-seminorm forms a norm on $\vhnc$, and it is in turn equivalent to the piecewise $H^2$-norm. That is, for any $v_h\in \vhnc$, there holds
	\[\|v_h\|_{2,h}\lesssim |v_h|_{2,h}.\]
\end{corollary}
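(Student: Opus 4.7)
The plan is to use the companion operator $J_2$ from Theorem~\ref{thm:J2} to transfer the problem from the nonconforming space $\vhnc$ to the conforming space $V$, where the norm equivalence \eqref{norm-equiv} is already available. The key observation is that Theorem~\ref{4.2.d} applied with the trivial choices $\chi=0\in\p_k(\cT_h)$ and $v=0\in V$ yields
\[
\sum_{j=0}^{2} h^{j-2}\,|v_h-J_2v_h|_{j,h}\;\lesssim\;|v_h|_{2,h},
\]
so the difference $v_h-J_2v_h$ is, at every order of derivatives, controlled by the piecewise Hessian seminorm of $v_h$ (up to a favourable power of $h$, which is in any case uniformly bounded on $\Omega$).

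First I would split $v_h$ through the triangle inequality:
\[
\|v_h\|_{2,h}\;\le\;\|v_h-J_2v_h\|_{2,h}+\|J_2v_h\|_{2,\Omega}.
\]
For the first term I would use the three scaled estimates from Theorem~\ref{4.2.d} (for $j=0,1,2$) to conclude $\|v_h-J_2v_h\|_{2,h}\lesssim|v_h|_{2,h}$.

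For the second term I would exploit that $J_2v_h\in V$, so \eqref{norm-equiv} yields $\|J_2v_h\|_{2,\Omega}\lesssim|J_2v_h|_{2,\Omega}$. A further triangle inequality then gives
\[
|J_2v_h|_{2,\Omega}\;\le\;|J_2v_h-v_h|_{2,h}+|v_h|_{2,h}\;\lesssim\;|v_h|_{2,h},
\]
again by the $j=2$ case of Theorem~\ref{4.2.d}. Combining the two pieces produces $\|v_h\|_{2,h}\lesssim|v_h|_{2,h}$, which is precisely the claim. That the seminorm is a norm then follows immediately: $|v_h|_{2,h}=0$ forces $\|v_h\|_{2,h}=0$, hence $v_h=0$.

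There is no real obstacle here because the whole machinery of $J_2$ was built to support exactly this type of argument; the only subtlety worth noting is that \eqref{norm-equiv} is available precisely because $V\subset H^1_0(\Omega)$, i.e.\ the Dirichlet condition on the whole boundary (encoded in the vertex values $v_h(z)=0$ on $\partial\Omega$ and in the averaged normal derivative constraints on $\e^c$ in the definition of $\vhnc$) is faithfully inherited by $J_2v_h$ via Theorem~\ref{4.2.a}, guaranteeing that $J_2v_h$ indeed lies in $V$ and not merely in $H^2(\Omega)$.
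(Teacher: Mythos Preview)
Your proof is correct and follows essentially the same route as the paper: both split via the triangle inequality $\|v_h\|_{2,h}\le\|v_h-J_2v_h\|_{2,h}+\|J_2v_h\|_{2,\Omega}$, bound the first piece by Theorem~\ref{4.2.d}, and handle the second via \eqref{norm-equiv} together with another triangle inequality and Theorem~\ref{4.2.d}. Your additional remarks on why $|\cdot|_{2,h}$ is then a norm and on the role of the boundary conditions in ensuring $J_2v_h\in V$ are accurate and a nice clarification beyond what the paper states explicitly.
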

\begin{proof}
	Recall that $J_2v_h\in V$ for $v_h\in \vhnc$. Then the triangle inequality leads to
	\begin{align*}
		\|v_h\|_{2,h} &\leq \|v_h-J_2v_h\|_{2,h}+\|J_2v_h\|_{2,\Omega}\lesssim |v_h|_{2,h}+|J_2v_h|_{2,\Omega}\\&\lesssim |v_h|_{2,h}+|J_2v_h-v_h|_{2,h}+|v_h|_{2,h}\lesssim |v_h|_{2,h},
	\end{align*}
	with Theorem~\ref{4.2.d} and \eqref{norm-equiv} in the second step, again  the triangle inequality  and Theorem~\ref{4.2.d} in the last  two steps.
\end{proof}
The same idea follows for the second-order VE space $\qhnc$ and the following two theorems similarly construct $J_3$ (as $J_1$) and modify $J_3$ to obtain $J_4$ (as $J_2$) with the $L^2$-orthogonality. We prefer to highlight only the main steps in the construction of $J_3$ to avoid the repetition of the arguments.
\begin{theorem}\label{thm:J3}
There exists a linear operator  $J_3:\qhnc\to Q_h^{\ell+1,c}$ satisfying the following properties:
\begin{enumerate}[before=\leavevmode,label=\upshape(\alph*),ref=\thetheorem (\alph*)]
	\item\label{4.3.a} $\mathrm{dof}_j^{\ell,\mathrm{nc}} (J_3q_h) = \mathrm{dof}_j^{\ell,\mathrm{nc}} (q_h)$ for all $j=1,\dots,{\mathrm{N}_1^{\ell,\mathrm{nc}}}$,
	\item\label{4.3.b} $(\nabla_{\pw}(q_h-J_3q_h),\nabla_{\pw}\chi)_\Omega=0$ for all $\chi\in\p_\ell(\cT_h)$,
	\item\label{4.3.c} $\displaystyle \sum_{j=0}^1 h^{j-1}|q_h-J_3q_h|_{j,h}\lesssim \inf_{\chi\in \p_\ell(\cT_h)}|q_h-\chi|_{1,h}+\inf_{q\in Q}|q_h-q|_{1,h}.$
\end{enumerate}
\end{theorem}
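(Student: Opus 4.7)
The plan is to mimic the construction of $J_1$ in Theorem~\ref{thm:J1}, adapted to the second-order setting. Comparing DoFs, $\qhnc$ has edge moments against $\M_{\ell-1}(e)$ and volume moments against $\M_{\ell-2}(K)$, while $Q_h^{\ell+1,\mathrm{c}}$ has vertex values, edge moments against $\M_{\ell-1}(e)$, and volume moments against $\M_{\ell-1}(K)$. Thus only the vertex values and the volume moments of degree exactly $\ell-1$ require a separate assignment. I define $J_3 q_h \in Q_h^{\ell+1,\mathrm{c}}$ by: (i) setting the edge moments $\dashint_e J_3 q_h\chi\ds$ ($\chi\in\M_{\ell-1}(e)$) and the volume moments $\dashint_K J_3 q_h\chi\dx$ ($\chi\in\M_{\ell-2}(K)$) equal to the corresponding nonconforming DoFs of $q_h$; (ii) setting $\dashint_K J_3 q_h\chi\dx = \dashint_K \pg_\ell q_h\chi\dx$ for $\chi\in\M_{\ell-1}^*(K)$; (iii) setting $J_3 q_h(z) = |\cT_z|^{-1}\sum_{K\in\cT_z}\pg_\ell q_h|_K(z)$ for $z\in\V^i\cup\V^c$ and $J_3 q_h(z) = 0$ for $z\in\V^s$ (to enforce $J_3 q_h\in Q$).

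Property~\ref{4.3.a} follows directly from (i). For property~\ref{4.3.b}, fix $K\in\cT_h$ and $\chi\in\p_\ell(K)$ and integrate by parts:
\[(\nabla(q_h-J_3q_h),\nabla\chi)_K = -(q_h-J_3q_h,\Delta\chi)_K + (q_h-J_3q_h,\partial_{\nn}\chi)_{\partial K}.\]
Both terms vanish by (i): $\Delta\chi\in\p_{\ell-2}(K)$ and $\M_{\ell-2}(K)$ spans $\p_{\ell-2}(K)$, while $\partial_{\nn}\chi|_e\in\p_{\ell-1}(e)$ and $\M_{\ell-1}(e)$ spans $\p_{\ell-1}(e)$ (the nonconforming jump condition $\int_e[q_h]\chi\ds=0$ for $\chi\in\p_{\ell-1}(e)$ makes the edge trace in these DoFs unambiguous from either side).

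For property~\ref{4.3.c}, the triangle inequality $|q_h-J_3q_h|_{1,h}\leq|q_h-\pg_\ell q_h|_{1,h}+|\pg_\ell q_h-J_3q_h|_{1,h}$ splits the task. The first summand is bounded by $\inf_{\chi\in\p_\ell(\cT_h)}|q_h-\chi|_{1,h}$ from the $H^1$-orthogonality \eqref{pg1}. For the second, $(\pg_\ell q_h-J_3q_h)|_K\in Q_h^{\ell+1,\mathrm{c}}(K)$, so the element-wise norm equivalence (the second-order analogue of \eqref{4.1}) reduces the task to bounding each DoF of $\pg_\ell q_h-J_3q_h$. The $\M_{\ell-1}^*$-volume DoFs vanish by (ii); the edge and lower-order volume DoFs reduce to moments of $\pg_\ell q_h-q_h$ and are controlled by $|\pg_\ell q_h-q_h|_{1,\omega_K}$ via Cauchy--Schwarz, the trace inequality, and a Poincar\'e--Friedrichs estimate (available because the nonconforming projection satisfies $\int_{\partial K}(\pg_\ell q_h-q_h)\ds=0$). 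The vertex-value DoFs telescope into pointwise jumps $[\pg_\ell q_h]_e(z)$ across shared edges; writing $[\pg_\ell q_h]_e=[\pg_\ell q_h-q]_e$ for any auxiliary $q\in Q$ (using $[q]_e=0$) and applying the polynomial inverse inequality on $e$, the trace inequality, and Poincar\'e--Friedrichs on the patch $\omega_e$ yield
\[|[\pg_\ell q_h]_e(z)| \lesssim h_e^{-1/2}\|[\pg_\ell q_h-q]_e\|_e \lesssim |\pg_\ell q_h-q_h|_{1,\omega_e}+|q_h-q|_{1,\omega_e}.\]
Summing over elements and taking infima over $\chi\in\p_\ell(\cT_h)$ and $q\in Q$ gives the $j=1$ part of the claim; the $j=0$ part follows from Poincar\'e--Friedrichs on $\Omega$.

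The hard part will be the vertex-averaging step together with careful bookkeeping of the scaling in the DoF-based norm equivalence on $Q_h^{\ell+1,\mathrm{c}}(K)$, and the handling of boundary vertices in $\V^s$ where $J_3q_h(z)=0$ rather than an average. The latter is treated by choosing $q\in Q$, which satisfies $q(z)=0$ on $\Gamma^s$ by the essential boundary condition, so the same inverse/trace chain applies with the missing-neighbour contribution replaced by the prescribed boundary value.
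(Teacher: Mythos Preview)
Your construction and the proofs of (a) and (b) match the paper's exactly; for (c) the paper merely refers to the proof of Theorem~\ref{4.1.c} ``with obvious modifications,'' and your outline supplies those modifications along the same lines.

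Two small execution points need correcting. First, in the vertex-jump estimate you cannot apply Poincar\'e--Friedrichs on $\omega_e$ directly to $\pg_\ell q_h-q$: the term $h_e^{-1}\|q_h-q\|_{\omega_e}$ arising from the trace inequality is not controlled by $|q_h-q|_{1,\omega_e}$. The remedy, exactly as in the paper's $J_1$ proof with the constant $a_e$, is to insert the edge constant $c_e:=\dashint_e(q-q_h)\,\mathrm{d}s$ (well-defined since $\int_e[q_h]\chi\,\mathrm{d}s=0$ for all $\chi\in\p_{\ell-1}(e)\ni 1$) and then apply Poincar\'e--Friedrichs to $q_h-q+c_e$ on $\omega_e$. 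Second, the $j=0$ bound does not follow from ``Poincar\'e--Friedrichs on $\Omega$'' (the difference $q_h-J_3q_h$ is only piecewise $H^1$); it follows element by element from $\dashint_e(q_h-J_3q_h)\,\mathrm{d}s=0$, which is part of (a). Finally, for boundary vertices on $\Gamma^s$ use the trace $q|_e=0$ for $e\subset\Gamma^s$ rather than the pointwise statement $q(z)=0$, which is not defined for general $q\in H^1$.
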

\noindent {\it{Construction of $J_3$}}.
First we observe that the DoFs of $\qhnc$ constitute a subset of the DoFs of $Q_h^{\ell+1,\mathrm{c}}$.  We define a linear operator $J_3:\qhnc\to Q_h^{\ell+1,\mathrm{c}}$ through DoFs of $Q_h^{\ell+1,\mathrm{c}}$, for $q_h\in \qhnc$, by
\begin{align*}
\mathrm{dof}_j^{\ell,\mathrm{nc}} (J_3q_h) &= \mathrm{dof}_j^{\ell,\mathrm{nc}} (q_h)\quad\forall\;j=1,\dots,\mathrm{N}_1^{\ell,\mathrm{nc}},\\
J_3q_h(z) &= \frac{1}{|\cT_z|}\sum_{K\in\cT_z}\pg_\ell q_h|_K(z)\quad\forall\;z\in\V^i\cup\V
^c,\\
\dashint_K J_3v_h\chi\dx&=\dashint_K\pg_\ell q_h\chi\dx\quad\forall\;\chi\in\M^*_{\ell-1}(K).
\end{align*}
\begin{proof}[Proof of Theorem \ref{4.3.a}]
This is an immediate consequence  of the definition of $J_3$. 
\end{proof}
\begin{proof}[Proof of  Theorem \ref{4.3.b}]
An integration by parts and Theorem \ref{4.3.a} prove, for any $\chi\in\p_\ell(K)$ and $K\in\cT_h$, that
\begin{align*}
	(\nabla(q_h-J_3q_h),\nabla\chi)_K=-(q_h-J_3q_h,\Delta\chi)_K +(q_h-J_3q_h,\partial_{\nn}\chi)_{\partial K}=0.  
\end{align*}
This concludes the proof of Theorem \ref{4.3.b}.
\end{proof}
\begin{proof}[Proof of Theorem \ref{4.3.c}]
This follows analogously as  the proof of Theorem~\ref{4.1.c} with obvious modifications. 
\end{proof}
\begin{theorem}\label{thm:J4}
There exists a linear operator  $J_4:\qhnc\to Q$ such that it satisfies   Theorem~\ref{4.3.a}-\ref{4.3.c} and in addition the $L^2$-orthogonality property. In particular,
\begin{enumerate}[before=\leavevmode,label=\upshape(\alph*),ref=\thetheorem (\alph*)]
	\item\label{4.4.a} $\mathrm{dof}_j^{\ell,\mathrm{nc}} (J_4q_h) = \mathrm{dof}_j^{\ell,\mathrm{nc}} (q_h)$ for all $j=1,\dots,{\mathrm{N}_1^{\ell,\mathrm{nc}}}$,
	\item\label{4.4.b} $(\nabla_{\pw}(q_h-J_3q_h),\nabla_{\pw}\chi)_\Omega=0$ for all $\chi\in\p_\ell(\cT_h)$,
	\item\label{4.4.c} $q_h-J_4q_h\perp \p_\ell(\Omega)$ in $L^2(\Omega)$,
	\item\label{4.4.d} $\displaystyle \sum_{j=0}^1 h^{j-1}|q_h-J_4q_h|_{j,h}\lesssim \inf_{\chi\in \p_\ell(\cT_h)}|q_h-\chi|_{1,h}+\inf_{q\in Q}|q_h-q|_{1,h}.$
\end{enumerate}
\end{theorem}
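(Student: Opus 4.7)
The plan is to construct $J_4$ from $J_3$ by adding a bubble correction, exactly parallelling the passage from $J_1$ to $J_2$ in Theorem~\ref{thm:J2}. For each $K\in\cT_h$ I take a bubble $b_K\in H^1_0(K)$ satisfying the standard $H^1$-bubble inverse estimates
\begin{align*}
\|\chi\|_K^2 \lesssim (b_K\chi,\chi)_K \lesssim \|\chi\|_K^2, \qquad \|\chi\|_K \lesssim \sum_{m=0}^1 h_K^m|b_K\chi|_{m,K} \lesssim \|\chi\|_K \qquad \forall\chi\in\p_\ell(K),
\end{align*}
and define $q_K\in\p_\ell(K)$ as the Riesz representative, in the weighted inner product $(b_K\bullet,\bullet)_K$, of the functional $w_\ell\mapsto (q_h-J_3q_h,w_\ell)_K$ on the Hilbert space $\p_\ell(K)$. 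Setting $\tilde{q}_h|_K:=q_K$ and $b_h|_K:=b_K\in H^1_0(\Omega)$, the identity
\begin{align*}
(b_h\tilde{q}_h,w_\ell)_\Omega = (q_h-J_3q_h,w_\ell)_\Omega \qquad \forall w_\ell\in\p_\ell(\cT_h)
\end{align*}
holds by construction, and I define $J_4q_h:=J_3q_h+b_h\tilde{q}_h\in Q$. Since $b_K$ vanishes on $\partial K$, $J_4q_h$ inherits continuity across edges and the homogeneous trace on $\Gamma^s$ from $J_3q_h$, so $J_4q_h\in Q$.

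Part (a) follows because $b_h=0$ on every edge preserves edge-mean DoFs, i.e., $\dashint_e J_4q_h\,\chi\ds=\dashint_e J_3q_h\,\chi\ds=\dashint_e q_h\,\chi\ds$ for $\chi\in\M_{\ell-1}(e)$ by Theorem~\ref{4.3.a}, while for cell DoFs with $\chi\in\M_{\ell-2}(K)\subset\p_\ell(K)$ the defining equation restricted to $K$ gives $\int_K(b_Kq_K)\chi\dx=\int_K(q_h-J_3q_h)\chi\dx$, and hence $\dashint_K J_4q_h\,\chi\dx=\dashint_K q_h\,\chi\dx$. Part (c) is immediate from the defining identity taken with $w_\ell\in\p_\ell(\Omega)\subset\p_\ell(\cT_h)$, in fact yielding the stronger piecewise orthogonality $q_h-J_4q_h\perp\p_\ell(\cT_h)$ in $L^2(\Omega)$.

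For (b), I perform a local integration by parts: for $\chi\in\p_\ell(K)$,
\begin{align*}
(\nabla(q_h-J_4q_h),\nabla\chi)_K = -(q_h-J_4q_h,\Delta\chi)_K + \int_{\partial K}(q_h-J_4q_h)\,\partial_{\nn}\chi\ds.
\end{align*}
The boundary integral vanishes since $\partial_{\nn}\chi|_e\in\p_{\ell-1}(e)$ and the $\M_{\ell-1}(e)$-moments are preserved by (a), while the volume term vanishes since $\Delta\chi\in\p_{\ell-2}(K)\subset\p_\ell(K)$ and (c) applies piecewise. Summation over $K$ yields (b).

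For (d), the triangle inequality gives $|q_h-J_4q_h|_{1,h}\leq|q_h-J_3q_h|_{1,h}+|b_h\tilde{q}_h|_{1,h}$. Testing the defining identity with $w_\ell=\tilde{q}_h$ on each $K$, the lower bubble-inverse estimate produces $\|q_K\|_K^2\lesssim(b_Kq_K,q_K)_K=(q_h-J_3q_h,q_K)_K\leq\|q_h-J_3q_h\|_K\|q_K\|_K$, so $\|q_K\|_K\lesssim\|q_h-J_3q_h\|_K$; together with the upper bubble-inverse estimate, $|b_Kq_K|_{1,K}\lesssim h_K^{-1}\|q_K\|_K\lesssim h_K^{-1}\|q_h-J_3q_h\|_K$. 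Applying Theorem~\ref{4.3.c} to convert the $L^2$-term into the $H^1$-best-approximation bound gives the claim for $|q_h-J_4q_h|_{1,h}$; the $L^2$-term on the left-hand side of (d) is then recovered from a broken Poincar\'e--Friedrichs inequality together with the preserved edge-moment DoFs. The main (and essentially the only) subtlety is ensuring existence of an $H^1_0(K)$ bubble on a general polygon with the required inverse estimates; this is the direct $H^1$-analog of the tool used in \eqref{bubble-inverse}, and all other steps are a faithful transcription of the $J_2$ argument to the second-order setting.
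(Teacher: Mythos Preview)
Your construction and proof are correct and follow exactly the approach the paper intends: the paper does not spell out the proof of Theorem~\ref{thm:J4} but states that it parallels the passage from $J_1$ to $J_2$, and your bubble-correction argument with $b_K\in H^1_0(K)$ (the object the paper later invokes as $b_{1,K}$ in \eqref{bubble-inverse-1}) is precisely that parallel. Your handling of parts (a)--(d), including the Poincar\'e--Friedrichs step via the preserved edge moments, matches the structure of the paper's proof of Theorem~\ref{thm:J2}.
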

\subsection{Energy error estimate}\label{subsec:5.2}
This section proves the energy error estimate for the nonconforming case invoking the companion operators constructed in Subsection~\ref{subsec:companion}.
\begin{proposition}[Nonconforming interpolation]\label{prop:inter_v}
There exists an  interpolation operator $\vec{I}_h^{\mathrm{nc}}:(V\cap H^s(\Omega))\times (Q\cap H^r(\Omega)) \to \vhnc\times \qhnc$  such that, for $v\in V\cap  H^s(\Omega)$ with $2\leq s\leq k+1$ and $q\in Q\cap  H^r(\Omega)$ with $1\leq r\leq \ell+1$, $\vec{I}_h^{\mathrm{nc}}v:=(v_I^{\mathrm{nc}},q_I^{\mathrm{nc}})$ satisfies
\[|v-v_I^{\mathrm{nc}}|_{j,h}\lesssim h^{s-j}|v|_{s,\Omega}\;\text{for}\;0\leq j \leq 2\;\text{and}\;|q-q_I^{\mathrm{nc}}|_{j,h}\lesssim h^{r-j}|q|_{r,\Omega}\;\text{for}\;0\leq j \leq 1.\]
\end{proposition}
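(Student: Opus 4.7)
The plan is to define the nonconforming interpolation operator element-by-element through the prescribed degrees of freedom and then deduce the asserted best-approximation rates by a Bramble--Hilbert argument adapted to polygonal meshes, exactly as in the conforming case (Proposition~\ref{prop:inter_vc}). For each $K\in\cT_h$ I would set $v_I^{\mathrm{nc}}|_K\in\vhnc(K)$ to be the unique element satisfying $\mathrm{dof}_j^{k,\mathrm{nc}}(v_I^{\mathrm{nc}})=\mathrm{dof}_j^{k,\mathrm{nc}}(v)$ for every DoF listed in Table~\ref{table1}; unisolvence was already established when identifying the triplet as a Ciarlet finite element in Subsection~\ref{subsec:3.1}. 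Since the DoFs consist of vertex values, edge moments of $\partial_{\nn}v$ of degree $k-2$, edge moments of $v$ of degree $k-3$, and interior scaled-monomial moments of degree $k-4$, the inter-element compatibility conditions $\int_e[\partial_{\nn}v_I^{\mathrm{nc}}]\chi\,\ds=0$ and $\int_e[v_I^{\mathrm{nc}}]\chi\,\ds=0$ in the definition of the global space $\vhnc$ are automatically satisfied, and the boundary conditions follow from $v=\partial_{\nn}v=0$ on $\Gamma^c$ and $v=0$ on $\Gamma^s$. The pressure interpolant $q_I^{\mathrm{nc}}\in\qhnc$ is defined analogously from Table~\ref{table2}.

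Two standard ingredients then drive the estimate. First, \emph{polynomial preservation}: for every $\chi\in\p_k(K)$ one has $\chi\in\vhnc(K)$ (the orthogonality condition in the definition is automatic because $\pd_k\chi=\chi$), hence $\chi_I^{\mathrm{nc}}|_K=\chi$. Second, \emph{local stability}:
\[|w_I^{\mathrm{nc}}|_{j,K}\;\lesssim\;\sum_{m=0}^{2} h_K^{m-j}|w|_{m,K},\qquad 0\leq j\leq 2,\]
for every $w\in H^2(K)$. I would deduce this from the norm equivalence on $\vhnc(K)$ between $|\cdot|_{2,K}$ and $h_K^{-1}\|\mathrm{Dof}^{k,\mathrm{nc}}(\cdot)\|_{\ell^2}$ (in the spirit of \cite[Lemma~3.6]{huang21}, already invoked in the proof of Theorem~\ref{thm:J1}), combined with trace and Cauchy--Schwarz inequalities that bound each individual DoF by $\|w\|_{2,K}$ with the scaling factors dictated by \ref{M1}-\ref{M2}.

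With these ingredients, the proof concludes via Bramble--Hilbert. For each $K$ pick $\chi_K\in\p_k(K)$ from Proposition~\ref{prop:poly} with $|v-\chi_K|_{m,K}\lesssim h_K^{s-m}|v|_{s,K}$ for $0\leq m\leq s$. Write $v-v_I^{\mathrm{nc}}=(v-\chi_K)-(v-\chi_K)_I^{\mathrm{nc}}$ and apply the triangle inequality together with local stability to $w=v-\chi_K$; this yields $|v-v_I^{\mathrm{nc}}|_{j,K}\lesssim h_K^{s-j}|v|_{s,K}$ for $0\leq j\leq 2$, and summing over $K$ produces the global estimate. The pressure bound is obtained by the same template with $\qhnc$, $\pg_\ell$, and $(\ell,r)$ in place of $\vhnc$, $\pd_k$, and $(k,s)$. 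The main obstacle is the local stability on a general star-shaped polygon, where no single reference element is available: the crux is to exploit the scaled-monomial formulation of the moments together with the uniform geometric control of \ref{M1}-\ref{M2} so that the trace, inverse, and Cauchy--Schwarz constants remain independent of $K$.
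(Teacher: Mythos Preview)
The paper states this proposition without proof or citation, so there is no ``paper's own proof'' to compare against; your outline is the standard and correct argument (DoF-based local interpolant, polynomial preservation $\p_k(K)\subset\vhnc(K)$, local stability, and Bramble--Hilbert via Proposition~\ref{prop:poly}).

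One correction is worth recording. The norm equivalence you invoke for $\vhnc(K)$ is \emph{not} available in \cite{huang21}: that reference, and the passage you cite from the proof of Theorem~\ref{thm:J1}, treat the \emph{conforming} space $V_h^{k+1,\mathrm{c}}(K)$. For the nonconforming enhanced space $\vhnc(K)$ used here (which, for general $k$ and possible hanging nodes, is new to this paper), the inverse estimate and norm equivalence are established only later, in Lemmas~\ref{lem:inverse}--\ref{lem:lem6} of Section~\ref{subsec:std_est}. Your local stability step therefore rests on those lemmas; there is no circularity, since they do not use Proposition~\ref{prop:inter_v}, but the forward reference should be made explicit. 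For the pressure component $\qhnc$, the analogous tools are classical (see, e.g., \cite{Ayuso16,carstensen22}).
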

\begin{theorem}\label{theo:cv-nc}
Given $\vec{\bu}:=(u,p)\in (V\cap H^s(\Omega))\times (Q\cap H^r(\Omega)) $ for $s\geq 2$ and for $r\geq 1$,  the unique solution $\vec{\bu}_h^{\mathrm{nc}}=(u_h^{\mathrm{nc}},p_h^{\mathrm{nc}})\in \bH_\epsilon^{h,\mathrm{nc}}=\vhnc\times\qhnc$ for $k\geq 2$ and $\ell\geq 1$ to  \eqref{eq:operator_discrete_nc} satisfies 
	\begin{align*}
		\|\vec{\bu}-\vec{\bu}_h^{\mathrm{nc}}\|_{\bH_\epsilon^h}&\lesssim \|\vec{\bu}-\vec{I}_h^{\mathrm{nc}}\vec{\bu}\|_{\bH_\epsilon^h}+\|\vec{\bu}-\vec{\Pi}_h \vec{\bu}\|_{\bH_\epsilon^h}+\alpha^{1/2}(|u-\pg_\ell u|_{1,h}&\\&\quad+h|p-\pg_{k-2}p|_{1,h})+\beta h\|p-\pg_\ell p\|_\Omega+\mathrm{osc}_2(\tilde{f},\cT_h)+\mathrm{osc}_1(\tilde{g},\cT_h)&\\&\lesssim h^{\min\{k-1,\ell,s-2,r-1\}}(\|\tilde{f}\|_{s-4,\Omega}+\|\tilde{g}\|_{r-2,\Omega}).
\end{align*}
\end{theorem}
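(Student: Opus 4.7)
The plan is to mimic the strategy of Theorem~\ref{theo:cv-c} for the conforming case, but to account for nonconformity via the companion operators $J_2$ and $J_4$ constructed in Subsection~\ref{subsec:companion}. Let $\vec{\be}_h=(e_h^u,e_h^p):=\vec{I}_h^{\mathrm{nc}}\vec{\bu}-\vec{\bu}_h^{\mathrm{nc}}\in\bH_\epsilon^{h,\mathrm{nc}}$. First I would invoke the discrete coercivity from Theorem~\ref{theo:iso_discrete} and the discrete equation \eqref{eq:operator_discrete_nc} to obtain
\[ \|\vec{\be}_h\|_{\bH_\epsilon^h}^2 \lesssim \langle \cA_h^{\mathrm{nc}}(\vec{I}_h^{\mathrm{nc}}\vec{\bu}),\vec{\be}_h\rangle-\cF_h^{\mathrm{nc}}(\vec{\be}_h). \]
Then I would use the continuous problem \eqref{eq:operator} tested against the conforming lift $(J_2e_h^u,J_4e_h^p)\in V\times Q$ (allowable thanks to Theorems~\ref{thm:J2} and \ref{thm:J4}), obtaining the identity
\[ \langle\cA(\vec{\bu}),(J_2e_h^u,J_4e_h^p)\rangle=\cF((J_2e_h^u,J_4e_h^p)), \]
which I would subtract from the previous expression. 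This produces a splitting $T_1+T_2+T_3+T_4+T_5$ exactly mirroring \eqref{eq:4.1} in the conforming proof, with $T_1,T_3$ measuring the discretisation of $a_1,a_3$, the $T_2,T_4$ measuring the consistency of the coupling term $a_2$, and $T_5$ accounting for the data.

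For the diagonal terms $T_1$ and $T_3$, I would insert $\pd_k u$ and $\pg_\ell p$ as in the conforming proof. The polynomial-consistency property of $a_1^h$ and $a_3^h$ allows replacing $a_1^h(\pd_k u,e_h^u)$ by $a_1^{\pw}(\pd_k u,e_h^u)$, and the $H^2$- and $L^2$-orthogonality properties Theorem~\ref{4.2.b} and Theorem~\ref{4.2.c} of $J_2$ (respectively Theorem~\ref{4.4.b} and Theorem~\ref{4.4.c} of $J_4$) give $a_1^{\pw}(\pd_k u,e_h^u-J_2e_h^u)=0$ and analogously for $a_3^{\pw}$. The remaining contributions are of best-approximation type $\|\vec{\bu}-\vec{\Pi}_h\vec{\bu}\|_{\bH_\epsilon^h}$ and $\|\vec{\bu}-\vec{I}_h^{\mathrm{nc}}\vec{\bu}\|_{\bH_\epsilon^h}$, controlled by the continuity of $a_1^h,a_3^h$ (via stabilisations \eqref{s1}-\eqref{s4}) together with Theorem~\ref{4.2.d} and Theorem~\ref{4.4.d}. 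Corollary~\ref{cor:disc-norm-equiv} would be used to transfer seminorm bounds to the full energy norm.

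The coupling terms $T_2$ and $T_4$ are the main obstacle: one must control
\[ T_2 = \alpha(\nabla p,\nabla J_2 e_h^u)_\Omega-\alpha\sum_K(\Pi_{\ell-1}\nabla p_I^{\mathrm{nc}},\Pi_{k-1}\nabla e_h^u)_K, \]
and its mirror $T_4$, robustly in $\alpha,\beta,\gamma$. My plan is to split the nonconformity as $\nabla J_2e_h^u=\nabla e_h^u-\nabla(e_h^u-J_2e_h^u)$ piecewise, rewrite the discrete part by inserting $\nabla p$ and $\nabla u$, and use: (i) the $L^2$-orthogonality of $\Pi_{k-1},\Pi_{\ell-1}$ combined with the assumption $\ell\leq k$ to cancel spurious polynomial contributions (as in \eqref{eq:4.3}); (ii) the orthogonality $\nabla(e_h^u-J_2e_h^u)\perp(\p_{k-3}(\cT_h))^2$ in Theorem~\ref{4.2.c'} to absorb the piecewise-polynomial projections; and (iii) Theorems~\ref{4.2.d} and \ref{4.4.d} together with Poincar\'e--Friedrichs and \eqref{norm-equiv} so that factors of $h$ compensate the $\gamma^{1/2},\beta^{1/2}$ weights appearing from the off-diagonal coupling via the bound $\alpha\leq 1\leq\gamma^{1/2}$. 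This yields the additional approximation quantities $\alpha^{1/2}(|u-\pg_\ell u|_{1,h}+h|p-\pg_{k-2}p|_{1,h})$ and $\beta h\|p-\pg_\ell p\|_\Omega$ appearing in the statement.

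For the data term $T_5=\cF((J_2e_h^u,J_4e_h^p))-\cF_h^{\mathrm{nc}}(\vec{\be}_h)$, I would write it as $(\tilde f,J_2e_h^u-\Pi_k e_h^u)_\Omega+(\tilde g,J_4e_h^p-\Pi_\ell e_h^p)_\Omega$, and apply the $L^2$-orthogonality of $J_2,J_4$ against piecewise polynomials (Theorems~\ref{4.2.c} and \ref{4.4.c}) together with $L^2$-orthogonality of $\Pi_k,\Pi_\ell$ to replace $\tilde f,\tilde g$ by $(1-\Pi_k)\tilde f,(1-\Pi_\ell)\tilde g$, producing the oscillations $\mathrm{osc}_2(\tilde f,\cT_h)$ and $\mathrm{osc}_1(\tilde g,\cT_h)$. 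Collecting the bounds on $T_1$--$T_5$ in the coercivity estimate, applying the triangle inequality $\|\vec{\bu}-\vec{\bu}_h^{\mathrm{nc}}\|_{\bH_\epsilon^h}\leq\|\vec{\bu}-\vec{I}_h^{\mathrm{nc}}\vec{\bu}\|_{\bH_\epsilon^h}+\|\vec{\be}_h\|_{\bH_\epsilon^h}$, and invoking Propositions~\ref{prop:poly} and \ref{prop:inter_v} together with the regularity estimate, would conclude the proof at the rate $h^{\min\{k-1,\ell,s-2,r-1\}}$.
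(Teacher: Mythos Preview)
Your proposal is correct and follows essentially the same route as the paper: start from discrete coercivity, test the continuous problem with the conforming lift $(J_2e_h^u,J_4e_h^p)$, and exploit the orthogonality and approximation properties of $J_2,J_4$ (Theorems~\ref{thm:J2} and \ref{thm:J4}) to control the resulting consistency, coupling, and data terms. The paper merely organises the splitting into seven terms $T_1,\dots,T_7$ and then regroups $T_2+T_4+T_6$ into $T_8+T_9$ after inserting the continuous equation, whereas you subtract the continuous equation at the outset and arrive at five terms; the algebraic content is identical.

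Two small clarifications. First, the extra contribution $\beta h\|p-\pg_\ell p\|_\Omega$ does not come from the coupling terms as you suggest, but from the diagonal consistency $a_3^{\pw}(p-\pg_\ell p,e_h^p-J_4e_h^p)$ once Theorem~\ref{4.4.c} and Theorem~\ref{4.4.d} are applied to the $\beta$-weighted $L^2$ part. Second, your plan for the coupling terms invokes Theorem~\ref{4.2.c'}, i.e.\ $\nabla(e_h^u-J_2e_h^u)\perp(\p_{k-3}(\cT_h))^2$, which is vacuous for $k=2$; the paper treats $k=2$ separately by bounding $(\Pi_{\ell-1}\nabla p_I,\nabla J_2e_h^u-\Pi_{k-1}\nabla e_h^u)_\Omega$ directly with $|p_I|_{1,h}$ times $|e_h^u-J_2e_h^u|_{1,h}+\|(1-\Pi_{k-1})\nabla_\pw e_h^u\|_\Omega\lesssim h|e_h^u|_{2,h}$, which still yields the stated term $\alpha^{1/2}h|p-\pg_{0}p|_{1,h}$. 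Also, Corollary~\ref{cor:disc-norm-equiv} is not actually needed here.
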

\begin{proof}
Let $\vec{\bu}_I:=(u_I,p_I)\in\vhnc\times\qhnc$ be an interpolation of $\vec{\bu}$ and $\vec{\be}_h=(e_h^u,e_h^p):=(u_I-u_h,p_I-p_h)$. The coercivity of $\cA_h$ from Theorem~\ref{theo:iso_discrete} and the discrete problem \eqref{eq:operator_discrete_nc} lead to
\begin{align*}
	\|\vec{\be}_h\|^2_{\bH_\epsilon^h}&\lesssim\cA_h(\vec{\be}_h,\vec{\be}_h)=\cA_h(\vec{\bu}_I,\vec{\be}_h)-\cF_h(\vec{\be}_h)\\&=a_1^h(u_I,e_h^u)-a_2^h(p_I,e_h^u)+a_2^h(e_h^p,u_I)+a_3^h(p_I,e_h^p)-(\tilde{f}_h,e_h^u)_\Omega-(\tilde{g}_h,e_h^p)_\Omega\\&=(a_1^h(u_I-\pd_ku,e_h^u)+a_1^\pw(\pd_ku-u,e_h^u))+(a_1^\pw(u,e_h^u)-(\tilde{f},e_h^u)_\Omega)\\&\quad+(\tilde{f}-\tilde{f}_h,e_h^u)_\Omega+(-a_2^h(p_I,e_h^u)+a_2^h(e_h^p,u_I))+(a_3^h(p_I-\pg_\ell p,e_h^p)\\&\quad+a_3^\pw(\pg_\ell p-p,e_h^p))+(a_3^\pw(p,e_h^p)-(\tilde{g},e_h^p)_\Omega)+(\tilde{g}-\tilde{g}_h,e_h^p)_\Omega\\&=:T_1+T_2+T_3+T_4+T_5+T_6+T_7,\label{4.13}
\end{align*}
with an elementary algebra in the last two steps.  The boundedness of $\cA_h$ from Theorem~\ref{theo:iso_discrete} and the Cauchy--Schwarz inequality for $a_1^\pw$ and $a_3^\pw$ show the chain of bounds 
\begin{align} 
	T_1+T_5&\lesssim (\|u_I-\pd_k u\|_{\Omega}+\|u-\pd_k u\|_{\Omega})\|e_h^u\|_{\Omega}+(|u_I-\pd_k u|_{2,h}\nonumber\\&\quad+|u-\pd_k u|_{2,h}) |e_h^u|_{2,h}+\beta^{1/2}(\|p_I-\pg_\ell p\|_{\Omega}+\|p-\pg_\ell p\|_{\Omega})\beta^{1/2}\|e_h^p\|_\Omega\nonumber\\&\quad+\gamma^{1/2}(|p_I-\pg_\ell p|_{1,h}+|p-\pg_\ell p|_{1,h})\gamma^{1/2}|e_h^p|_{1,h}\nonumber\\&\lesssim (\|\vec{\bu}-\vec{\bu}_I\|_{\bH_\epsilon^h}+\|\vec{\bu}-\vec{\Pi}_h \vec{\bu}\|_{\bH_\epsilon^h})\|\vec{\be}_h\|_{\bH_\epsilon^h}\nonumber\\&\lesssim h^{\min\{k-1,s-2,\ell,r-1\}}(|u|_{s,\Omega}+|p|_{r,\Omega})\|\vec{\be}_h\|_{\bH_\epsilon^h},
\end{align}
with the triangle inequality in the second step, and Propositions~\ref{prop:poly}-\ref{prop:inter_v} in the last step. Taking $\vec{\bv}=(J_2e_h^u,J_4e_h^p)$ in the continuous problem \eqref{eq:operator} allows us to assert that 
\begin{align*}
	&T_2+T_4+T_6 \nonumber \\
	&\quad = a_1^\pw(u,e_h^u-J_2e_h^u)+a_2(p,J_2e_h^u)+(\tilde{f},J_2e_h^u-e_h^u)_\Omega+a_3^\pw(p,e_h^p-J_4e_h^p)\nonumber\\&\qquad-a_2(J_4e_h^p,u)+(\tilde{g},J_4e_h^p-e_h^p)_\Omega-(a_2^h(p_I,e_h^u)-a_2^h(e_h^p,u_I))\nonumber\\&\quad =\biggl(a_1^\pw(u-\pd_k u,e_h^u-J_2e_h^u)+(\tilde{f}-\Pi_k\tilde{f},J_2e_h^u-e_h^u)_\Omega+a_3^\pw(p-\pg_\ell p,e_h^p-J_4e_h^p)\nonumber\\&\qquad+(\tilde{g}-\Pi_\ell\tilde{g},J_4e_h^p-e_h^p)_\Omega\biggr)+\biggl(a_2(p,J_2e_h^u)-a_2^h(p_I,e_h^u)+a_2^h(e_h^p,u_I)-a_2(J_4e_h^p,u)\biggr)\nonumber\\&\quad=:T_8+T_9.
\end{align*}
The last step follows from Theorems~\ref{4.2.b}-\ref{4.2.c} and \ref{4.4.b}-\ref{4.4.c}. The Cauchy--Schwarz inequality and Theorems~\ref{4.2.d} and \ref{4.4.d} show for $T_8$ that
\begin{align*}
	T_8&\lesssim (|u-\pd_k u|_{2,h}+\mathrm{osc}_2(\tilde{f},\cT_h))|e_h^u|_{2,h}+(\beta h\|p-\pg_\ell p\|_\Omega+\gamma^{1/2}|p-\pg_\ell p|_{1,h} \nonumber\\&\quad +\mathrm{osc}_1(\tilde{g},\cT_h))\gamma^{1/2}|e_h^p|_{1,h}\\&\lesssim h^{\min\{k-1,s-2,\ell,r-1\}}(|u|_{s,\Omega}+|p|_{r,\Omega}+|\tilde{f}|_{s-4,\Omega}+|\tilde{g}|_{r-2,\Omega})\|\vec{\be}_h\|_{\bH_\epsilon^h}.
\end{align*}
Next, an elementary algebraic manipulation  for $T_9$ provides
\begin{align}
	\alpha^{-1}T_9	&=(\nabla p-\Pi_{\ell-1}\nabla p_I,\nabla J_2e_h^u)_\Omega+(\Pi_{\ell-1}\nabla p_I,\nabla J_2e_h^u-\Pi_{k-1}\nabla e_h^u)_\Omega\nonumber\\&\quad+(\Pi_{\ell-1}\nabla e_h^p,\Pi_{k-1}\nabla u_I-\nabla u)_\Omega+(\Pi_{\ell-1}\nabla e_h^p-\nabla J_4e_h^p,\nabla u-\nabla\pg_\ell u)_\Omega,\label{4.18}
\end{align}
with the $L^2$-orthogonality of $\Pi_{\ell-1}$ and Theorem~\ref{4.4.b} in the last term. The Cauchy--Schwarz inequality, the triangle inequality $\|\nabla p-\Pi_{\ell-1}\nabla p_I\|_\Omega\leq |p-p_I|_{1,h}+\|(1-\Pi_{\ell-1})\nabla_\pw p_I\|_\Omega$   for the first term in \eqref{4.18} lead to 
\begin{align}
	(\nabla p-\Pi_{\ell-1}\nabla p_I,\nabla J_2e_h^u)_\Omega&\leq (|p-p_I|_{1,h}+\|(1-\Pi_{\ell-1})\nabla_\pw p_I\|_\Omega) |J_2e_h^u|_{1,\Omega}\nonumber\\&\lesssim  (|p-p_I|_{1,h}+\|\nabla p-\Pi_{\ell-1}\nabla p\|_\Omega) |J_2e_h^u|_{2,\Omega}\nonumber\\&\lesssim h^{\min\{\ell,r-1\}}|p|_{r,\Omega}|e_h^u|_{2,h},\label{4.19}
\end{align} 
having employed $\|\nabla p_I-\Pi_{\ell-1}\nabla p_I\|_K\leq \|\nabla p_I-\Pi_{\ell-1}\nabla p\|_K$ for any $K\in\cT_h$ followed by the triangle inequality in the second step, and Propositions~\ref{prop:poly}-\ref{prop:inter_v} and the stability of $J_2$ from Theorem \ref{4.2.d}   in the last step.  For $k=2$, the Cauchy--Schwarz inequality and the $L^2$-stability of $\Pi_{\ell-1}$  for  the second term in \eqref{4.18} imply
\begin{align*}&(\Pi_{\ell-1}\nabla p_I,\nabla J_2e_h^u-\Pi_{1}\nabla e_h^u)_\Omega\leq | p_I|_{1,h}\|\nabla J_2e_h^u-\Pi_{1}\nabla e_h^u\|_\Omega\\&\qquad\leq (|p_I-p|_{1,h}+|p|_{1,\Omega})(| e_h^u- J_2e_h^u|_{1,h}+|e_h^u-\Pi_{1} e_h^u|_{1,h})\lesssim h|p|_{1,\Omega}|e_h^u|_{2,h}.
\end{align*}
The second step results from the triangle inequality, and the last step from Propositions~\ref{prop:poly}-\ref{prop:inter_v}, and Theorem~\ref{4.2.d}. 
Theorem~\ref{4.2.c'} in the second term of \eqref{4.18} for $k\geq 3$ leads to 
\begin{align*}
	&(\Pi_{\ell-1}\nabla p_I,\nabla J_2e_h^u-\Pi_{k-1}\nabla e_h^u)_\Omega=(\Pi_{\ell-1}\nabla p_I-\nabla_\pw (\pg_{k-2} p),\nabla J_2e_h^u-\Pi_{k-1}\nabla e_h^u)_\Omega\nonumber\\&\leq (\|\nabla p-\Pi_{\ell-1}\nabla p\|_\Omega+| p-p_I|_{1,h}+|p-\pg_{k-2}p|_{1,h})\nonumber\\&\quad\times(| e_h^u-J_2e_h^u|_{1,h}+\|(1-\Pi_{k-1})\nabla_\pw e_h^u\|_\Omega)\lesssim h^{\min\{\ell,r,k-1\}}|p|_{r,\Omega}|e_h^u|_{2,h},
\end{align*} 
where we have used the bound $\|\nabla p_I-\Pi_{\ell-1}\nabla p_I\|_K\leq \|\nabla p_I-\Pi_{\ell-1}\nabla p\|_K$ for any $K\in\cT_h$ and the triangle inequality in the second step, and Propositions~\ref{prop:poly}-\ref{prop:inter_v}  and Theorem~\ref{4.2.d}  in the last step. Similarly  the remaining two terms  in \eqref{4.18} are handled as
\begin{subequations}\begin{align}
		(\Pi_{\ell-1}\nabla e_h^p,\Pi_{k-1}\nabla u_I-\nabla u)_\Omega&\lesssim h^{\min\{k-1,s-1\}}|u|_{s,\Omega}|e_h^p|_{1,h},\label{4.21}\\
		(\Pi_{\ell-1}\nabla e_h^p-\nabla J_4e_h^p,\nabla u-\nabla_\pw(\pg_\ell u))_\Omega&\lesssim h^{\min\{\ell,s-1\}}|u|_{s,\Omega}|e_h^p|_{1,h}.\label{4.22}
\end{align}\end{subequations}
The combination \eqref{4.19}-\eqref{4.22} and the observation $\alpha\leq 1\leq \gamma $ in \eqref{4.18}  prove that 
\begin{align*}
		T_9&\lesssim (\|\vec{\bu}-\vec{\bu}_I\|_{\bH_\epsilon^h}+\|\vec{\bu}-\vec{\Pi}_h \vec{\bu}\|_{\bH_\epsilon^h})|e_h^u|_{2,h}\\&\quad+\alpha^{1/2}(|u-\pg_\ell u|_{1,h}+h|p-\pg_{k-2}p|_{1,h})\gamma^{1/2} |e_h^p|_{1,h}\\&\lesssim h^{\min\{k-1,s-1,\ell,r-1\}} (|u|_{s,\Omega}+|p|_{r,\Omega})(|e_h^u|_{2,h}+\gamma^{1/2} |e_h^p|_{1,h}).
\end{align*}
The $L^2$-orthogonality of $\Pi_k$ and $\Pi_\ell$, and Proposition~\ref{prop:poly} result in
\begin{align*}
	T_3+T_7&=(h_{\cT_h}^2(1-\Pi_k)\tilde{f},h_{\cT_h}^{-2}(1-\Pi_k)e_h^u)_\Omega+(h_{\cT_h}(1-\Pi_\ell)\tilde{g},h_{\cT_h}^{-1}(1-\Pi_\ell)e_h^p)_\Omega\nonumber\\&\lesssim \mathrm{osc}_2(\tilde{f},\cT_h)|e_h^u|_{2,h}+\mathrm{osc}_1(\tilde{g},\cT_h)|e_h^p|_{1,h}\nonumber\\&\lesssim h^{\min\{k+1,s-2,\ell+1,r-1\}}(|\tilde{f}|_{s-4,\Omega}+|\tilde{g}|_{r-2,\Omega})(|e_h^u|_{2,h}+\gamma^{1/2}|e_h^p|_{1,h}),
\end{align*}
with $\gamma\geq 1 $ in the end. The previous estimates in \eqref{4.13} readily prove that 
\[\|\vec{\be}_h\|_{\bH_\epsilon^h}\lesssim h^{\min\{k-1,s-2,\ell,r-1\}} (|u|_{s,\Omega}+|p|_{r,\Omega}+|\tilde{f}|_{s-4,\Omega}+|\tilde{g}|_{r-2,\Omega}).\] This and Proposition~\ref{prop:inter_v} in the triangle inequality $\|\vec{\bu}-\vec{\bu}_h\|_{\bH_\epsilon^h}\leq \|\vec{\bu}-\vec{\bu}_I\|_{\bH_\epsilon^h}+\|\vec{\be}_h\|_{\bH_\epsilon^h}$ followed by regularity estimates conclude the proof of the theorem.
\end{proof}
\begin{remark}[Best-approximation for lowest-order case]
If we reconstruct $J_2$ for $k=2$ with an additional $H^1$-orthogonality in Theorem~\ref{4.2.c'} as $\nabla(v_h-J_2v_h)\perp (\p_0(\cT_h))^2$ in $(L^2(\Omega))^2$  (see Subsection~\ref{subsec:modified_comp} for a definition), then the error estimate in Theorem~\ref{theo:cv-nc} for $k=2$ and $\ell=1$ can be written in the best-approximation form 
\begin{align*}
		\|\vec{\bu}-\vec{\bu}_h^{\mathrm{nc}}\|_{\bH_\epsilon^h}&\lesssim \|\vec{\bu}-\vec{I}_h^{\mathrm{nc}}\vec{\bu}\|_{\bH_\epsilon^h}+\|\vec{\bu}-\vec{\Pi}_h \vec{\bu}\|_{\bH_\epsilon^h}+\alpha^{1/2}|u-\pg_\ell u|_{1,h}+\beta h\|p-\pg_\ell p\|_\Omega&\\&\quad+\mathrm{osc}_2(\tilde{f},\cT_h)+\mathrm{osc}_1(\tilde{g},\cT_h).
\end{align*}
\end{remark}
\section{A posteriori error analysis}\label{sec:apost}
This section contains the derivation of a posteriori error indicators and the proof of their robustness. We provide the details for the nonconforming case and a remark for the conforming case to avoid the repetition of arguments. 

\subsection{Preliminaries}\label{subsec:pre}
We collect here the following local estimates, proven in \cite[Lemma 3.2]{huang21} and \cite[Lemmas 3.3-3.4]{huang21}, respectively.
\begin{lemma}\label{lem:huang32}
For any $\epsilon >0$, there exists a positive constant $c(\epsilon)$ such that
\[|v|_{1,K} \lesssim \epsilon h_K |v|_{2,K} + c(\epsilon) h_K^{-1} \|v\|_{K} \qquad \forall v \in H^2(K).\]
\end{lemma}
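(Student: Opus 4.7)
The plan is to prove this as a local Ehrling-type interpolation inequality, by first reducing to a reference configuration and then applying a compactness argument.

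First I would rescale. Under assumption \ref{M1}, the element $K$ with barycentre $x_K$ and diameter $h_K$ is star-shaped with respect to a ball of radius $\rho h_K$. Setting $\hat{x}=(x-x_K)/h_K$ and $\hat{v}(\hat{x})=v(x)$, the rescaled element $\hat{K}$ has unit diameter and contains a ball of radius $\rho$, so it belongs to a compact family of admissible reference shapes. The chain rule gives $|v|_{m,K}=h_K^{1-m}|\hat{v}|_{m,\hat{K}}$ in two dimensions for $m=0,1,2$, so the inequality to be proved is equivalent to
\begin{equation*}
|\hat{v}|_{1,\hat{K}}\lesssim \epsilon\,|\hat{v}|_{2,\hat{K}} + c(\epsilon)\,\|\hat{v}\|_{\hat{K}}\qquad\forall\,\hat{v}\in H^2(\hat{K}),
\end{equation*}
with a constant that may depend on $\hat{K}$ but not on $h_K$.

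Second I would establish this reference-domain inequality by a standard contradiction argument. Suppose, for some fixed $\epsilon_0>0$, no such constant exists. Then there is a sequence $\hat{v}_n\in H^2(\hat{K})$ with $|\hat{v}_n|_{1,\hat{K}}=1$ and $\epsilon_0|\hat{v}_n|_{2,\hat{K}}+n\|\hat{v}_n\|_{\hat{K}}<1$. In particular $\{\hat{v}_n\}$ is bounded in $H^2(\hat{K})$ and $\|\hat{v}_n\|_{\hat{K}}\to 0$. The Rellich--Kondrachov theorem on the Lipschitz domain $\hat{K}$ supplies a subsequence converging strongly in $H^1(\hat{K})$; the limit must be zero because of the $L^2$-convergence, which contradicts $|\hat{v}_n|_{1,\hat{K}}=1$.

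Third I would address uniformity of $c(\epsilon)$ across the mesh. Since all admissible reference polygons form a shape-regular family parametrised in a compact set (all contained in a fixed ball and containing one of radius $\rho$), either a further compactness argument on this family or a covering of $K$ by a uniformly bounded number of sub-simplices yields a constant $c(\epsilon)$ independent of the particular polygon. Undoing the scaling inserts the powers $h_K$ and $h_K^{-1}$ in the correct places, giving the claimed bound.

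The main obstacle is not the inequality on a single fixed domain (which is classical Ehrling) but the uniformity of $c(\epsilon)$ over the whole family of polygonal reference shapes permitted by assumptions \ref{M1}--\ref{M2}; this is where shape-regularity is essential and where a clean compactness or covering argument is required.
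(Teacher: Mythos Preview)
The paper does not prove this lemma: it is listed in Subsection~\ref{subsec:pre} as a preliminary estimate quoted from \cite[Lemma~3.2]{huang21}, with no argument supplied. Your proposal therefore provides a proof where the paper only gives a citation.

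Your route via scaling plus an Ehrling-type contradiction is correct and standard. The scaling identity $|v|_{m,K}=h_K^{1-m}|\hat v|_{m,\hat K}$ in two dimensions is right, and the Rellich--Kondrachov contradiction on a fixed Lipschitz domain is classical. The only place that warrants tightening is your third step on uniformity over the family of shapes: rather than invoking compactness of an infinite family of reference polygons (which is delicate to make precise), it is cleaner to note that \ref{M1}--\ref{M2} bound the number of edges of $K$ uniformly (each edge has length $\ge\rho h_K$ while the perimeter is $\lesssim h_K$), so $K$ admits a sub-triangulation into a uniformly bounded number of uniformly shape-regular triangles. Applying the standard reference-triangle inequality on each sub-triangle and summing yields a constant $c(\epsilon)$ depending only on $\rho$. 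This sub-triangulation device is the mechanism used throughout the VEM literature (and in \cite{huang21}) to transfer classical finite-element estimates to general polygons, and it removes the vagueness in your formulation.
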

\begin{lemma}\label{lem:huang33-34}
For every $v \in H^2(K)$ such that $\Delta^2 v \in \mathbb{P}_{k-4}(K)$, there exists a polynomial $p \in \mathbb{P}_k(K)$ satisfying 
\[ \Delta^2 v = \Delta^2 p \qquad \text{in } K.\]
Moreover, the following estimates hold
\[
|p|_{2,K}  \lesssim h_K^2\|\Delta^2 v\|_{K} \lesssim |v|_{2,K},\qquad 
|p|_{2,K}  \lesssim h_K^{-2}\|v\|_{K},\qquad 
\|p\|_{K}  \lesssim \|v\|_{K}.
\]
\end{lemma}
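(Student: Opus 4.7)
The plan is to combine (i) a scaling/finite-dimensional-equivalence argument on polynomials to control $p$ by $\Delta^2 v$, with (ii) a bubble-function duality to control $\Delta^2 v$ by norms of $v$. For existence I would note that $\Delta : \p_m(K) \to \p_{m-2}(K)$ is surjective (a classical 2D dimension count), hence so is $\Delta^2 : \p_k(K) \to \p_{k-4}(K)$. Let $W := \ker(\Delta^2) \cap \p_k(K)$ denote the biharmonic polynomials of degree $\leq k$ (which contains $\p_1(K)$) and let $W^{\perp}$ be its $L^2(K)$-orthogonal complement in $\p_k(K)$. The restriction $\Delta^2 : W^{\perp} \to \p_{k-4}(K)$ is bijective, and I define $p \in W^{\perp}$ as the unique preimage of $\Delta^2 v$ under this restriction; then $\Delta^2 p = \Delta^2 v$ and $p$ depends linearly on $v$.

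For $|p|_{2,K} \lesssim h_K^2 \|\Delta^2 v\|_K$, I would transform $K$ onto a reference polygon $\widehat K$ of unit diameter by an affine map. On $\widehat K$, since $W^{\perp} \cap \p_1(\widehat K) = \{0\}$ (because $\p_1 \subset W$), the seminorm $|\cdot|_{2,\widehat K}$ is a norm on $W^{\perp}$, and the fact that $\Delta^2:W^{\perp}\to\p_{k-4}(\widehat K)$ is a linear bijection between finite-dimensional spaces yields the equivalences $|\widehat p|_{2,\widehat K} \simeq \|\Delta^2 \widehat p\|_{\widehat K} \simeq \|\widehat p\|_{\widehat K}$. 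Scaling back produces $|p|_{2,K} \lesssim h_K^2 \|\Delta^2 v\|_K$ together with $\|p\|_K \simeq h_K^2 |p|_{2,K}$.

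The remaining two inequalities then follow once $h_K^2 \|\Delta^2 v\|_K \lesssim |v|_{2,K}$ and $h_K^4 \|\Delta^2 v\|_K \lesssim \|v\|_K$ are established. Writing $q := \Delta^2 v \in \p_{k-4}(K)$, the hypothesis yields the distributional identity
\begin{equation*}
(q,\varphi)_K = \int_K \nabla^2 v : \nabla^2 \varphi \dx \qquad \forall\, \varphi \in H^2_0(K).
\end{equation*}
Testing with $\varphi := b_K q$ using the bubble from \eqref{bubble-inverse}, the first equivalence in \eqref{bubble-inverse} gives $\|q\|_K^2 \simeq (b_K q, q)_K$, and Cauchy--Schwarz together with the inverse inequality $|b_K q|_{2,K} \lesssim h_K^{-2} \|q\|_K$ from \eqref{bubble-inverse} produce $\|q\|_K^2 \lesssim h_K^{-2} |v|_{2,K} \|q\|_K$, hence $h_K^2 \|q\|_K \lesssim |v|_{2,K}$. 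For the $\|v\|_K$-bound I would replace $b_K$ by a sufficiently smooth bubble $\widetilde b_K$ (e.g.\ $b_K^2$ on a shape-regular sub-triangulation of $K$ afforded by \ref{M1}--\ref{M2}) chosen so that $\widetilde b_K q \in H^4(K)$ with vanishing traces up to the order needed to kill all boundary contributions in a double integration by parts; the analogous inverse estimate $\|\Delta^2(\widetilde b_K q)\|_K \lesssim h_K^{-4} \|q\|_K$ then delivers $h_K^4 \|q\|_K \lesssim \|v\|_K$. Together with the scaling bounds of the previous paragraph, these inequalities imply $|p|_{2,K} \lesssim h_K^{-2}\|v\|_K$ and $\|p\|_K \lesssim \|v\|_K$.

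The main obstacle lies in justifying the double integration by parts
\[ \int_K \nabla^2 v : \nabla^2 \varphi \dx = \int_K v \, \Delta^2 \varphi \dx \]
for $v \in H^2(K)$ with arbitrary traces on $\partial K$: this forces $\varphi$ to vanish on $\partial K$ together with $\nabla \varphi$, $(\nabla^2 \varphi)\nn$ and $\partial_{\nn}(\Delta \varphi)$, so that a plain $H^2_0(K)$-bubble does not suffice. The remedy is a higher-regularity bubble assembled from products of barycentric-coordinate factors on a shape-regular sub-triangulation, for which the polynomial-uniform inverse estimates remain valid by virtue of \ref{M1}--\ref{M2}.
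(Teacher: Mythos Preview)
The paper does not actually prove this lemma: it is stated as a preliminary result cited from \cite[Lemmas~3.3--3.4]{huang21}, so there is no in-paper proof to compare against. Your proposal is therefore being assessed on its own merits.

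Your argument is essentially correct and follows the route one would expect from the cited reference: surjectivity of $\Delta^2:\p_k\to\p_{k-4}$, a choice of right inverse via the $L^2$-orthogonal complement of biharmonic polynomials, a scaling/norm-equivalence step for $|p|_{2,K}\lesssim h_K^2\|\Delta^2 v\|_K$ and $\|p\|_K\simeq h_K^2|p|_{2,K}$, and bubble duality for the two bounds on $\|\Delta^2 v\|_K$. Two minor points are worth tightening. First, you cannot map an arbitrary polygon $K$ affinely onto a \emph{fixed} reference polygon; the correct move is the dilation $x\mapsto (x-x_K)/h_K$ onto a unit-diameter copy of $K$, after which the finite-dimensional equivalences hold with constants that are uniform thanks to \ref{M1}--\ref{M2} (this is the standard compactness argument for shape-regular polygonal families). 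Second, your remedy for the $\|v\|_K$-bound is the right one: a bubble such as $b_K^2$ built on a sub-triangulation indeed vanishes on $\partial K$ together with its first three derivatives (because $b_K$ and $\nabla b_K$ vanish there), which kills all boundary terms in the two successive integrations by parts; the weighted $L^2$-equivalence $\|q\|_K^2\simeq (b_K^2 q,q)_K$ and the inverse estimate $\|\Delta^2(b_K^2 q)\|_K\lesssim h_K^{-4}\|q\|_K$ follow from the same scaling argument as \eqref{bubble-inverse}. With these two clarifications your proof goes through.
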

\subsection{Standard estimates}\label{subsec:std_est}
We start with technical results (inverse estimates and norm equivalences) that are required in the analysis of the nonconforming formulations. These tools are available in the literature only for the conforming case. There are two terminologies, namely original and enhanced VE spaces, in the VE literature (see \cite{ahmad13} for more details). This paper utilises the enhanced versions, but we first  prove the results for the original space and then build for the enhanced space.  Let us denote  the original  local nonconforming VE space for deflections by $\tvhnc(K)$  and define by
\[\tvhnc(K):=\begin{dcases}
\begin{rcases}
	&v_h\in H^2(K)\cap C^0(\partial K): \Delta^2v_h\in \p_{k-4}(K),\;v_h|_e\in\p_k(e)\;\text{and}\\&\;\Delta v_h|_e\in\p_{k-2}(e)\quad\forall\;e\in\e_K,\;v_h|_{s_j}\in C^1(s_j),\\&\;\int_{e_{m_j}}(v_h-\pd_k v_h)\chi\ds=0\quad\forall\;\chi\in\p_{k-2}(e_{m_j}),\;\text{and}\\
	&\int_{e_{m_j+i}}(v_h-\pd_k v_h)\chi\ds=0\quad\forall\;\chi\in\p_{k-3}(e_{m_j+i})\\&\;\text{for}\;i=1,\dots,n_j;
	\;j=1,\dots,\tilde{N}_K
\end{rcases}.
\end{dcases}
\]
\begin{lemma}[Inverse estimates]\label{lem:inverse}
For any $\tilde{v} \in \tvhnc(K)$ and $K\in\cT_h$, there holds
\begin{equation}\label{eq:new-inv}
	|\tilde{v}|_{2,K}  \lesssim h_K^{-2}\|\tilde{v}\|_{K} \quad \text{and} \quad |\tilde{v}|_{1,K}  \lesssim h_K^{-1}\|\tilde{v}\|_{K}.\end{equation}
\end{lemma}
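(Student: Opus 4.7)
My plan is to split $\tilde v$ into a polynomial part and a biharmonic remainder, bound each, and then obtain the $H^1$ inverse estimate as a consequence of the $H^2$ one via Lemma~\ref{lem:huang32}.

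First I would invoke Lemma~\ref{lem:huang33-34}: since $\Delta^2\tilde v\in\p_{k-4}(K)$, there exists $p\in\p_k(K)$ with $\Delta^2 p=\Delta^2\tilde v$ and satisfying $|p|_{2,K}\lesssim h_K^{-2}\|\tilde v\|_K$ and $\|p\|_K\lesssim\|\tilde v\|_K$. Setting $w:=\tilde v-p$, the remainder is biharmonic, $\Delta^2 w=0$ in $K$, and inherits polynomial trace data from $\tvhnc(K)$: $w|_e\in\p_k(e)$, $\Delta w|_e\in\p_{k-2}(e)$, and hence $\partial_{nn}w|_e=\Delta w|_e-\partial_{\tau\tau}w|_e\in\p_{k-2}(e)$ on each edge $e\in\e_K$. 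In particular $\|w\|_K\lesssim\|\tilde v\|_K$, so it suffices to prove the $H^2$-inverse estimate for $w$.

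Next I would establish $|w|_{2,K}\lesssim h_K^{-2}\|w\|_K$ by integrating by parts twice. Using $\Delta^2 w=0$ gives the boundary identity
\[
 |w|_{2,K}^2=\int_{\partial K}\bigl[(\partial_n w)(\partial_{nn}w)+(\partial_\tau w)(\partial_{n\tau}w)-w\,\partial_n(\Delta w)\bigr]\,\mathrm{d}s.
\]
Each edge contribution pairs a polynomial factor (coming from $w|_e$, $\partial_{nn}w|_e$) with a rougher factor. The rough factors $\partial_n w$ and $\partial_n\Delta w$ are handled by the standard trace inequality $\|\partial_n\psi\|_e\lesssim h_K^{-1/2}|\psi|_{1,K}+h_K^{1/2}|\psi|_{2,K}$ applied to $\psi\in\{w,\Delta w\}$, while the polynomial factors are controlled by polynomial inverse/trace estimates with constants depending only on $k$ and on \ref{M1}--\ref{M2}. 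For the mixed term $(\partial_\tau w)(\partial_{n\tau}w)$ I would perform a tangential integration by parts along each edge, transferring the derivative onto the polynomial factor $\partial_\tau w\in\p_{k-1}(e)$. Cauchy--Schwarz, Young's inequality, and Lemma~\ref{lem:huang32} then let me absorb the $|w|_{2,K}^2$ contributions into the left-hand side and the lower-order $|w|_{1,K}^2$ terms into $h_K^{-2}\|w\|_K^2$. The triangle inequality combined with $|p|_{2,K}\lesssim h_K^{-2}\|\tilde v\|_K$ yields $|\tilde v|_{2,K}\lesssim h_K^{-2}\|\tilde v\|_K$.

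Finally, with the $H^2$-inverse estimate in hand, the $H^1$-inverse estimate follows directly from Lemma~\ref{lem:huang32} with a fixed $\epsilon$ (say $\epsilon=1$):
\[
 |\tilde v|_{1,K}\lesssim h_K\,|\tilde v|_{2,K}+h_K^{-1}\|\tilde v\|_K\lesssim h_K^{-1}\|\tilde v\|_K.
\]
The main obstacle is the boundary identity step, because $\partial_n w$ is not polynomial along $\partial K$. The accounting must carefully balance polynomial inverse estimates against generic $H^2$ trace estimates, deal with the corner contributions produced by the tangential integration by parts (where $w$ is only $C^1$ along each side $s_j$, not necessarily across the corners $z_j$), and verify that all constants are uniform under \ref{M1}--\ref{M2}.
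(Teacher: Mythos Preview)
Your initial reduction to the biharmonic remainder $w=\tilde v-p$ via Lemma~\ref{lem:huang33-34} and the final step deriving the $H^1$ estimate from the $H^2$ one via Lemma~\ref{lem:huang32} match the paper exactly. The gap is in the middle step.

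The boundary identity you wrote is formally correct, but your plan to control $\partial_n(\Delta w)$ by the trace inequality $\|\partial_n\psi\|_e\lesssim h_K^{-1/2}|\psi|_{1,K}+h_K^{1/2}|\psi|_{2,K}$ with $\psi=\Delta w$ requires $\Delta w\in H^2(K)$; this is not available. The definition of $\tvhnc(K)$ imposes no continuity of $\Delta\tilde v$ across vertices, so $\Delta w|_{\partial K}$ is merely piecewise polynomial with possible jumps, and the harmonic function $\Delta w$ is then only in $H^{1-\epsilon}(K)$ near the corners. Even if the identity is read as a duality pairing, the resulting bound would involve $|\Delta w|_{2,K}\sim |w|_{4,K}$, which you cannot absorb. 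The same issue contaminates the first term: the ``polynomial factor'' $\partial_{nn}w|_e=\Delta w|_e-\partial_{\tau\tau}w|_e$ is indeed a polynomial on each edge, but its size is governed by the interior quantity $\Delta w|_e$, not by $w|_e$, and no polynomial inverse estimate bounds $\|\Delta w\|_e$ in terms of $\|w\|_K$ without passing through $|w|_{3,K}$. The obstacles you flagged (corner terms, uniform constants) are secondary; the regularity deficit is the real problem.

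The paper bypasses all of this by an indirect argument. It exploits that the biharmonic $w$ minimises $|\cdot|_{2,K}$ over the set $S(K)$ of $H^2(K)$ functions sharing its Dirichlet trace and the edge moments of $\partial_n w$, and then constructs a competitor $Q_Kw\in\widetilde V_h^{k+1,\mathrm c}(K)$ in the \emph{conforming} space matching the boundary DoFs (at corner vertices, $\nabla w(z)$ is read off from the two tangential derivatives of the polynomial traces). Since the inverse estimate and DoF norm equivalence are already known for conforming VE functions, one gets $|w|_{2,K}\le |Q_Kw|_{2,K}\lesssim h_K^{-1}\|\mathrm{Dof}^{k+1,\mathrm c}_{\partial K}(w)\|_{\ell^2}$, and every boundary DoF is bounded by $h_K^{-1}\|w\|_K+\epsilon h_K|w|_{2,K}$ using only the $H^2$ trace inequality and Lemma~\ref{lem:huang32}; the $\epsilon$ term is absorbed.
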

\begin{proof}
Given $\tilde{v} \in \tvhnc(K)$, $\Delta^2\tilde{v}\in \p_{k-4}(K)$ and consequently, we can choose a polynomial   $p \in \mathbb{P}_k(K)$ from Lemma~\ref{lem:huang33-34}. The triangle inequality and the second bound in Lemma~\ref{lem:huang33-34}   assert that 
\[ |\tilde{v}|_{2,K} \leq |\tilde{v}-p|_{2,K} + |p|_{2,K} \lesssim |\tilde{v}-p|_{2,K}+ h_K^{-2}\|\tilde{v}\|_{K}.\]
If we prove  $|\tilde{v}-p|_{2,K}  \lesssim h_K^{-2}\|\tilde{v}-p\|_{K}$, then the triangle inequality together with the third bound in  Lemma~\ref{lem:huang33-34} will provide 
\[
|\tilde{v}-p|_{2,K}  \lesssim h_K^{-2}\|\tilde{v}-p\|_{K} 
\lesssim h_K^{-2}(\|\tilde{v}\|_{K}+\|p\|_{K}) 
\lesssim h_K^{-2}\|\tilde{v}\|_{K}.
\]
Hence we concentrate on showing  $|\tilde{v}-p|_{2,K}  \lesssim h_K^{-2}\|\tilde{v}-p\|_{K}$. First we note that since $\Delta^2\tilde{v} = \Delta^2 p$ and $\tilde{v} - p \in \tvhnc(K)$, without loss of generality we can assume that $\Delta^2\tilde{v} = 0$. Then we define, for a fixed $\tilde{v}$, the following set
\begin{equation}\label{def:Sk}
	S(K):= \{ w\in H^2(K): w = \tilde{v} \text{on $\partial K$,} \int_e\partial_{\nn}(\tilde{v}-w)\chi \ds =0 \quad \forall \chi \in \mathbb{P}_{k-2}(e),\, e\in \mathcal{E}_K\}\end{equation}
and the fact  $a^K(\tilde{v},\tilde{v}-w) =0$ for $w\in S(K)$ leads to
\begin{align} |\tilde{v}|_{2,K} \leq |w|_{2,K} \quad \forall w\in S(K).\label{eq:6.3}\end{align}
Next,  we define  $Q_K\tilde{v} \in \widetilde{V}_h^{k+1,\rm{c}}(K)$ for $\tilde{v} \in \tvhnc(K)$ through the DoFs as
\begin{equation}
	\label{eq:aux09}
	\mathrm{Dof}_{\partial K}^{k+1,\mathrm{c}}(Q_K\tilde{v}) = \mathrm{Dof}_{\partial K}^{k+1,\mathrm{c}}(\tilde{v}), \quad \text{and }  \mathrm{Dof}_{K}^{k+1,\mathrm{c}}(Q_K\tilde{v}) = 0,\end{equation}
where
\[ \mathrm{Dof}^{k+1,\mathrm{c}}(\bullet) = \underbrace{\mathrm{Dof}_{\partial K}^{k+1,\mathrm{c}}(\bullet)}_{\text{boundary DoFs}} \cup \underbrace{\mathrm{Dof}_K^{k+1,\mathrm{c}}(\bullet)}_{\text{interior DoFs}}.\]
Observe that, for $\tilde{v} \in H^2(K)$, its tangential derivative $\partial_{\bt} \tilde{v}$ is well-defined along $\partial K$. If $z$ is not a corner in $\partial K$, then we can assign that $\partial_{\nn}\tilde{v}(z) = 0$, and if $z$ is a corner then the two tangential derivatives at $z$ will suffice to define $\nabla \tilde{v}(z)$ uniquely. This implies that $Q_K\tilde{v}$ is well-defined. In addition, since $Q_K\tilde{v}$ is uniquely determined by boundary DoFs of  $\tilde{v}$ and $\tilde{v}|_e\in \mathbb{P}_k(e)$ for all $e\in \mathcal{E}_K$, we have
\[Q_K\tilde{v} = \tilde{v} \quad \text{on $\partial K$}.\]
This and \eqref{eq:aux09} show that $Q_K\tilde{v} \in S(K)$ and, consequently \eqref{eq:6.3} imply the first inequality in
\begin{align*} |\tilde{v}|_{2,K}  \leq |Q_K\tilde{v}|_{2,K}  &\lesssim h_K^{-2} \|Q_K\tilde{v}\|_{K} \\&\lesssim h_K^{-1} \|\mathrm{Dof}^{k+1,\mathrm{c}}(Q_K\tilde{v})\|_{\ell^2}  =  h_K^{-1} \|\mathrm{Dof}_{\partial K}^{k+1,\mathrm{c}}(Q_K\tilde{v})\|_{\ell^2} \end{align*} 
with the inverse estimate and the norm equivalence available for conforming VE functions in the next two  inequalities,  and  \eqref{eq:aux09} in the last equality. 

Let us examine each contribution to the DoFs in the expression above. Firstly, for any $z\in \cV_K$ it can be inferred that 
\begin{align*}
	|\tilde{v}(z)| & \leq \| \tilde{v}\|_{L^\infty(\partial K)} \lesssim h_K^{-1/2}\|\tilde{v}\|_{L^2(\partial K)}\\
	& \lesssim h_K^{-1}\|\tilde{v}\|_{K} + |\tilde{v}|_{1,K} \lesssim (1 + c(\epsilon)) h_K^{-1}\|\tilde{v}\|_{K} + \epsilon h_K |\tilde{v}|_{2,K},
\end{align*} 
where we have used the inverse estimate for polynomials in 1d in the second step, and  the trace inequality and Lemma~\ref{lem:huang32} in the last two steps.

Secondly,  similar arguments show that
\begin{align*}
	h_z |\nabla \tilde{v}(z)| & = h_z|\nabla Q_K\tilde{v}(z)| \leq h_z | Q_K \tilde{v}|_{1,\infty,\partial K} \lesssim \| Q_K \tilde{v}\|_{\infty,\partial K}\\
	&\lesssim h_K^{-1/2}\|\tilde{v}\|_{\partial K}\lesssim h_K^{-1} \|\tilde{v}\|_{K} + \epsilon h_K |\tilde{v}|_{2,K}.
\end{align*}
Note that the inverse inequality for polynomials  is suitable here since $\partial_{\nn}Q_K\tilde{v}|_e \in \mathbb{P}_{k-1}(e)$. For the remaining boundary moments, the Cauchy--Schwarz inequality and the inverse estimate lead to
\begin{align*}
	\Big|\int_e \partial_{\nn} \tilde{v}\chi_{k-2} \ds\Big| & = \Big|\int_e \partial_{\nn} (Q_K \tilde{v}) \chi_{k-2}\ds\Big| \leq \|\partial_{\nn}(Q_K \tilde{v})\|_{e}h_e^{1/2}\\
	& \lesssim h_e^{-1/2} \|Q_K\tilde{v}\|_{e} =h_e^{-1/2} \|\tilde{v}\|_{e} \lesssim h_K^{-1} \|\tilde{v}\|_{K}  + \epsilon h_K |\tilde{v}|_{2,K} 
\end{align*}
with \eqref{eq:6.3} and  Lemma~\ref{lem:huang32} once more in the last two steps.  Similarly, we can prove that
\[ \dashint_e \tilde{v} \chi_{k-3}\ds \lesssim h_K^{-1} \|\tilde{v}\|_{K}  + \epsilon h_K |\tilde{v}|_{2,K} .\]
These bounds allow us to write 
$  \|\mathrm{Dof}_{\partial K}^{k+1,\mathrm{c}}(Q_K\tilde{v})\|_{\ell^2}  \lesssim h_K^{-1} \|\tilde{v}\|_{K} + \epsilon h_K |\tilde{v}|_{2,K},$
which in turn proves that 
\[  |\tilde{v}|_{2,K} \lesssim h_K^{-2}\|\tilde{v}\|_{K} + \epsilon  |\tilde{v}|_{2,K}\]
and absorbing the $\epsilon$ term on the left-hand side we immediately have the first bound in \eqref{eq:new-inv}. For the second bound it suffices to combine the first bound with  Lemma~\ref{lem:huang32}.
\end{proof}
\begin{figure}[H]
\begin{center}
	\includegraphics[width=0.25\textwidth]{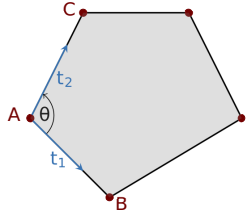}
\end{center}

\vspace{-4mm}
\caption{Sketch of a polygonal domain $K$ and three consecutive vertices $A,B,C$. The unit vectors $\bt_1,\bt_2$ form an angle $\theta$ on $A$.}\label{fig:diagram2}
\end{figure}
\begin{lemma}[Poincar\'e-type inequality]\label{lem:poin}
Let $K$ be a polygonal domain and $v\in H^2(K)$.  If $v(A) = v(B) = v(C)$ for any three non-collinear consecutive vertices $A,B,C$ of $K$ (see the diagram in Figure~\ref{fig:diagram2}), then there exists a positive constant $C_{\rm{P}}$ depending only on the mesh regularity parameter $\rho$, such that  
\[ |v|_{1,K} \leq C_{\rm{P}} h_K |v|_{2,K}.\] 
\end{lemma}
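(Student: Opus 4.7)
The plan is a three-step argument that first reduces to $v(A)=v(B)=v(C)=0$, then bounds the mean gradient $\bar g := |K|^{-1}\int_K \nabla v\,\dx$ by $|v|_{2,K}$, and finally combines this with the standard Poincar\'e--Friedrichs inequality $\|\nabla v-\bar g\|_{0,K}\lesssim h_K|v|_{2,K}$ to conclude via
\[
|v|_{1,K}^2 \le 2\|\nabla v-\bar g\|_{0,K}^2 + 2|\bar g|^2\,|K| \;\lesssim\; h_K^2\,|v|_{2,K}^2.
\]
Replacing $v$ by $v-v(A)$ preserves both seminorms and enforces the first step (the evaluations are well-defined because $H^2(K)\hookrightarrow C^0(\overline K)$ in two dimensions).

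For the central bound $|\bar g|\lesssim |v|_{2,K}$, I would fix the centre $x_0$ of the ball $B(x_0,\rho h_K)\subset K$ provided by \ref{M1}. For each vertex $P\in\{A,B,C\}$ the segment $[x_0,P]$ lies in $K$, and after density of $C^\infty(\overline K)$ in $H^2(K)$ the fundamental theorem of calculus yields
\[
v(P)-v(x_0) \;=\; (P-x_0)\cdot g_P, \qquad g_P \;:=\; \int_0^1 \nabla v(x_0+t(P-x_0))\,\dt.
\]
Equating these three expressions under the hypothesis $v(A)=v(B)=v(C)=0$ gives $g_A\cdot(A-x_0)=g_B\cdot(B-x_0)=g_C\cdot(C-x_0)$. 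A scaled trace inequality from $K$ onto the segment $[x_0,P]$ combined with Cauchy--Schwarz on the parameter interval and the Poincar\'e bound applied to $\nabla v - \bar g$ produces $|g_P-\bar g|\lesssim |v|_{2,K}$ (using $|P-x_0|\simeq h_K$ from \ref{M1}). Writing $g_P=\bar g+(g_P-\bar g)$ in the previous identities then yields
\[
|\bar g\cdot(A-B)|\;+\;|\bar g\cdot(A-C)| \;\lesssim\; h_K\,|v|_{2,K}.
\]
Provided the non-collinearity of $A,B,C$ combined with \ref{M1}--\ref{M2} delivers a lower bound on $\sin(\angle BAC)$ depending only on $\rho$, the $2\times 2$ linear system for $\bar g$ with rows $(A-B)^\top$ and $(A-C)^\top$ is uniformly invertible, furnishing $|\bar g|\lesssim |v|_{2,K}$.

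The main obstacle is the quantitative non-degeneracy of the triangle $ABC$: the word ``non-collinear'' is only qualitative, and the mesh regularity (M1)--(M2) controls edge lengths and the existence of an inscribed ball but does not a priori rule out an interior angle of $ABC$ arbitrarily close to $\pi$. If this angle control is not absorbed into $\rho$, an alternative is a compactness argument: normalise $h_K=1$, and from a hypothetical counterexample sequence extract a weak $H^2$-limit on a limit polygon (using uniform $H^2$-extension operators on shape-regular Lipschitz domains, Rellich compactness, and the $H^2\hookrightarrow C^0$ embedding). The limit would have to be an affine function vanishing at three non-collinear points and hence identically zero, contradicting the normalisation $|v_n|_{1,K_n}=1$.
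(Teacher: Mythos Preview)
Your primary route is genuinely different from the paper's. The paper bypasses the mean gradient entirely: with $\bt_1,\bt_2$ the unit tangents along $AB,AC$ and $\theta=\angle BAC$, the pointwise algebraic bound $(1-|\cos\theta|)\,|\nabla v(\bx)|^2 \le (\partial_{\bt_1}v(\bx))^2+(\partial_{\bt_2}v(\bx))^2$ is integrated over $K$, and the hypotheses $v(A)=v(B)$, $v(A)=v(C)$ give $\int_{AB}\partial_{\bt_1}v\,\ds=\int_{AC}\partial_{\bt_2}v\,\ds=0$, so a Poincar\'e--Friedrichs inequality with vanishing edge average (standard under (M1)--(M2)) yields $\|\partial_{\bt_j}v\|_{K}\le C_{\mathrm{PF}}h_K|v|_{2,K}$ and hence $C_{\rm P}=C_{\mathrm{PF}}\sqrt{2/(1-|\cos\theta|)}$. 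This uses only boundary traces and no linear-system inversion. Your mean-gradient argument is conceptually sound, but the ``scaled trace from $K$ onto $[x_0,P]$'' is not immediate, since near the vertex $P$ no uniform tube around the segment fits inside $K$; one repairs this by inscribing a shape-regular triangle with apex $P$ and base in the ball $B(x_0,\rho h_K)$ and using the trace on its median. Both arguments terminate with the same factor $(1-|\cos\theta|)^{-1/2}$, so your diagnosis of the obstacle matches the paper exactly.

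There is, however, a genuine gap in your compactness fallback. If along the sequence $\angle B_nA_nC_n\to\pi$, the limit points $A_\infty,B_\infty,C_\infty$ become collinear, and an affine function can vanish on a line without being identically zero; the intended contradiction evaporates precisely in the regime you are trying to cover. This is not a phantom concern: (M1)--(M2) alone do \emph{not} force the angle away from $\pi$. The convex pentagon with vertices $(0,0),(1,0),(1,1),(0,1),(-\epsilon,\tfrac12)$ satisfies (M1)--(M2) with $\rho$ independent of $\epsilon$, yet the interior angle at $(-\epsilon,\tfrac12)$ tends to $\pi$. The paper simply leaves the $\theta$-dependence explicit in $C_{\rm P}$; absorbing it into $\rho$ genuinely requires an additional angle hypothesis, or---in the applications, where $v$ vanishes at \emph{all} vertices---the freedom to select a favourable consecutive triple.
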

\begin{proof}

With respect to Figure~\ref{fig:diagram2}, let $\bt_1,\bt_2$ be two tangential unit vectors along the sides $AB$ and $AC$, respectively, oriented as in the diagram (moving away from the vertex $A$), forming an angle $\theta \in (0,\pi)$, and 
$|\bt_1 \cdot \bt_2| = |\cos\theta|$.  Owing to the transformation stability result from \cite{carstensen12} we know that 
\[ \min_{\ba\in \mathbb{R}^2\setminus\{\cero\}} \frac{(\ba\cdot\bv)^2 + (\ba\cdot\bu)^2}{|\ba|^2} = 1- |\bv \cdot \bu|, \]
for linearly independent unit vectors $\bv,\bu\in\mathbb{R}^2$. We use this result in our context with $\ba = \nabla v(\bx)$ for an interior point $\bx$ of $K$, $\bv = \bt_1$, $\bu = \bt_2$, giving 
\[ (1- |\cos \theta|)|\nabla v(\bx)|^2 \leq (\nabla v(\bx)\cdot \bt_1)^2 + (\nabla v(\bx)\cdot \bt_2)^2.\]  
Now we define $f_j = \nabla v \cdot \bt_j$ for $j=1,2$. An integration over $K$ leads to 
\[|v|^2_{1,K} \leq \frac{1}{1-|\cos\theta|} (\|f_1\|^2_{K} + \|f_2\|^2_{K}).\]
On the other hand, note that since $\int_A^Bf_1\ds = 0 = \int_A^Cf_2\ds$ from the assumption $v(A)=v(B)=v(C)$, we can apply the Poincar\'e--Friedrichs inequality to $f_1$ and $f_2$ (see, for example, \cite{carstensen22}). We are then left with 
\[ \|f_i\|_{K} \leq C_{\rm{PF}} h_K |f_i|_{1,K}\quad \text{for } i = 1,2.\]
Hence
\[ |v|^2_{1,K} \leq \frac{C^2_{\rm{PF}}}{1-|\cos\theta|} h_K^2(|f_1|^2_{1,K} + |f_2|^2_{1,K}) \leq \frac{2C^2_{\rm{PF}}}{1-|\cos\theta|} h_K^2|v|^2_{2,K},\]
which proves the sought bound with $C_{\rm{P}}: = C_{\rm{PF}} \sqrt{ \frac{2}{1-|\cos\theta|}}$. 
\end{proof}
\begin{lemma}[Local norm equivalence]\label{lem:norm-equiv}
For $\tilde{v}\in \tvhnc(K)$, there holds 
\[ \| \tilde{v} \|_{K}  \simeq h_K \| \mathrm{Dof}^{k,\mathrm{nc}}(\tilde{v})\|_{\ell^2}. \] 
\end{lemma}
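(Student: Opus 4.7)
The strategy is to establish the two-sided inequality
\[
 h_K \|\mathrm{Dof}^{k,\mathrm{nc}}(\tilde{v})\|_{\ell^2} \;\lesssim\; \|\tilde{v}\|_K \;\lesssim\; h_K \|\mathrm{Dof}^{k,\mathrm{nc}}(\tilde{v})\|_{\ell^2},
\]
treating the two directions separately.

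\textbf{Upper bound on the DoFs (easy direction).} I would bound each class of degrees of freedom individually, reusing the trace/inverse machinery developed at the end of the proof of Lemma~\ref{lem:inverse}. Vertex values are controlled via the 1d polynomial inverse inequality on $\tilde{v}|_e\in\p_k(e)$, the trace inequality, and Lemma~\ref{lem:huang32}, yielding $|\tilde{v}(V_i)|\lesssim (1+c(\epsilon))h_K^{-1}\|\tilde{v}\|_K+\epsilon h_K|\tilde{v}|_{2,K}$. The edge moments $\int_e\partial_{\nn}\tilde{v}\,\chi_{k-2}\ds$ and $\dashint_e\tilde{v}\,\chi_{k-3}\ds$ are handled analogously by Cauchy--Schwarz, polynomial inverse estimates on the edge and Lemma~\ref{lem:huang32}. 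For the interior moments (present when $k\geq 4$), $|\dashint_K\tilde{v}\,\chi\dx|\lesssim h_K^{-1}\|\tilde{v}\|_K$ follows directly from Cauchy--Schwarz and scaling of $\chi\in\M_{k-4}(K)$. Summing the squared bounds gives
\[
 h_K^2\|\mathrm{Dof}^{k,\mathrm{nc}}(\tilde{v})\|_{\ell^2}^2 \;\lesssim\; \|\tilde{v}\|_K^2 + \epsilon^2 h_K^4|\tilde{v}|_{2,K}^2,
\]
and the spurious $\epsilon h_K^2|\tilde{v}|_{2,K}$ term is absorbed after invoking the inverse estimate of Lemma~\ref{lem:inverse} and choosing $\epsilon$ small enough.

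\textbf{Lower bound (harder direction).} Here I would recycle the auxiliary operator $Q_K:\tvhnc(K)\to \widetilde{V}_h^{k+1,\mathrm{c}}(K)$ defined in \eqref{eq:aux09}, which by construction satisfies $Q_K\tilde v=\tilde v$ on $\partial K$, inherits its boundary DoFs from those of $\tilde{v}$, and has all conforming interior DoFs equal to zero. A triangle-inequality split gives
\[
 \|\tilde{v}\|_K \;\leq\; \|Q_K\tilde{v}\|_K \,+\, \|\tilde{v}-Q_K\tilde{v}\|_K.
\]
For the first term, the norm equivalence for conforming VE spaces \cite[Lemma~3.6]{huang21} yields $\|Q_K\tilde{v}\|_K\lesssim h_K\|\mathrm{Dof}_{\partial K}^{k+1,\mathrm{c}}(Q_K\tilde{v})\|_{\ell^2}$, and each conforming boundary DoF of $Q_K\tilde v$ is directly computable from the nonconforming boundary DoFs of $\tilde v$ (since $Q_K\tilde v|_e=\tilde v|_e\in\p_k(e)$), hence is bounded by $\|\mathrm{Dof}^{k,\mathrm{nc}}(\tilde v)\|_{\ell^2}$. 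For the second term, one uses that $w:=\tilde v-Q_K\tilde v$ vanishes on $\partial K$ and applies the Poincar\'e--Friedrichs inequality twice to get $\|w\|_K\lesssim h_K^2|w|_{2,K}$, then Lemma~\ref{lem:inverse} on $\tilde{v}$ together with the standard inverse estimate for $Q_K\tilde v\in \widetilde{V}_h^{k+1,\mathrm{c}}(K)$.

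\textbf{Main obstacle.} The subtle point is that the naive estimate above gives $\|w\|_K\lesssim \|\tilde v\|_K+\|Q_K\tilde v\|_K$, reintroducing the very quantity one wishes to control. I would circumvent this by further decomposing $\tilde{v}=\tilde{v}_b+\tilde{v}_0$ with $\tilde{v}_b\in \tvhnc(K)$ carrying the boundary DoFs of $\tilde{v}$ (and zero interior moments) and $\tilde{v}_0\in \tvhnc(K)\cap H^2_0(K)$ carrying the interior moments, then handling them separately: the part $\tilde v_b$ is controlled via the $Q_K$-lifting argument above (where now $w_b=\tilde v_b-Q_K\tilde v_b\in H^2_0(K)$ has \emph{zero interior DoFs}, which via a scaling/compactness argument on the shape-regular reference polygon forces $\|w_b\|_K\lesssim h_K^2|w_b|_{2,K}$ with the $H^2$ seminorm in turn controlled by boundary DoFs), while $\tilde{v}_0\in H^2_0(K)$ is handled by pairing its $L^2$-norm with a suitable polynomial test function realising the interior moments, combined with Poincar\'e in $H^2_0(K)$. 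Combining these estimates and summing the contributions from both pieces delivers $\|\tilde v\|_K\lesssim h_K\|\mathrm{Dof}^{k,\mathrm{nc}}(\tilde v)\|_{\ell^2}$ with constants depending only on the mesh-regularity parameter $\rho$ from \ref{M1}--\ref{M2}.
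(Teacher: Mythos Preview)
Your easy direction matches the paper's Step~1; the paper simply invokes Lemma~\ref{lem:inverse} directly to replace $|\tilde v|_{1,K}$ and $|\tilde v|_{2,K}$ by $h_K^{-1}\|\tilde v\|_K$ rather than carrying an $\epsilon$, but the content is identical.

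The hard direction has a genuine gap. Your DoF-based splitting $\tilde v=\tilde v_b+\tilde v_0$ presupposes that the function $\tilde v_0\in\tvhnc(K)$ with vanishing boundary DoFs lies in $H^2_0(K)$. This fails for two reasons specific to $\tvhnc(K)$. First, $\partial_{\nn}\tilde v_0|_e$ is \emph{not} a polynomial along $e$ (only $\Delta\tilde v_0|_e\in\p_{k-2}(e)$ is prescribed), so the vanishing of the $k-1$ normal-moment DoFs in ($\mathbb D2^\star$) does not force $\partial_{\nn}\tilde v_0|_e=0$. Second, the edge enhancement constraints $\int_e(\tilde v_0-\pd_k\tilde v_0)\chi\,ds=0$ tie the trace $\tilde v_0|_e$ to $\pd_k\tilde v_0$, and $\pd_k\tilde v_0$ depends on the \emph{interior} DoFs through $a^K(\tilde v_0,\chi_k)=\int_K\Delta^2\chi_k\,\tilde v_0\,dx+\text{boundary terms}$; with nonzero interior moments this is nonzero, so $\tilde v_0|_e\neq 0$ in general and $\tilde v_0\notin H^1_0(K)$, let alone $H^2_0(K)$. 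Your Poincar\'e step on $\tilde v_0$ therefore collapses. The ``scaling/compactness on a reference polygon'' sketch for $\tilde v_b$ is also too loose (there is no fixed reference element here), but that is secondary.

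The paper avoids this by a decomposition that is PDE-based rather than DoF-based: write $\tilde v=\tilde v_1+\tilde v_2$ where $\tilde v_2\in H^1_0(K)\cap H^2(K)$ solves $\Delta^2\tilde v_2=\Delta^2\tilde v$ in $K$ with $\partial_{\nn\nn}\tilde v_2=\partial_{\nn\nn}\tilde v$ on $\partial K$, so that $\tilde v_1$ is biharmonic with $\partial_{\nn\nn}\tilde v_1=0$ and $\tilde v_1=\tilde v$ on $\partial K$. Then $\|\tilde v_1\|_K$ is controlled via $Q_K$ and Lemma~\ref{lem:poin} by $h_K^{1/2}\|\tilde v\|_{\partial K}\simeq h_K\|\mathrm{Dof}^{k,\mathrm{nc}}_{\partial K}(\tilde v)\|_{\ell^2}$, while $|\tilde v_2|_{2,K}$ is bounded by an integration-by-parts identity that exposes $g_1:=\Delta^2\tilde v\in\p_{k-4}(K)$ and $g_2:=\partial_{\nn\nn}\tilde v$ and pairs them directly against the DoFs of $\tilde v$. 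This is the step your splitting cannot reproduce.
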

\begin{proof} \textbf{Step 1.} 
Proceeding as in the proof of Lemma~\ref{lem:inverse}, for the DoFs we have 
\[
|\tilde{v}(z)|  \lesssim h_K^{-1}\|\tilde{v}\|_{K} + |\tilde{v}|_{1,K} 
\lesssim h_K^{-1} \|\tilde{v}\|_{K}\] 
with the inverse inequality applied to $\tilde{v}$ from Lemma~\ref{lem:inverse}. Using the Cauchy--Schwarz inequality, the trace inequality, and Lemma~\ref{lem:inverse} again,  we arrive at  
\[
\Big|\int_e \partial_{\nn}\tilde{v}\chi_{k-2}\ds \Big|  \leq \|\partial_{\nn}\tilde{v}\|_{e}h_e^{1/2}
\lesssim |\tilde{v}|_{1,K} +h_K |\tilde{v}|_{2,K}
\lesssim h_K^{-1} \|\tilde{v}\|_{K}.
\]
In addition, 
\[
\Big|\dashint_e \tilde{v}\chi_{k-3}\ds \Big|\leq h_e^{-1/2} \|\tilde{v}\|_{e} \lesssim  h_K^{-1} \| \tilde{v} \|_{K}.
\]
The Cauchy--Schwarz inequality for the cell moments proves that
\[
\Big|\dashint_K \tilde{v}\chi_{k-4}\dx\Big| \leq |K|^{-1/2}\|\tilde{v}\|_{K}  = h_K^{-1} \| \tilde{v} \|_{K},
\]
and combining these bounds together we readily obtain 
\begin{equation}\label{eq:step21}
	\| \mathrm{Dof}^{k,\mathrm{nc}}(\tilde{v})\|_{\ell^2} \lesssim h_K^{-1} \| \tilde{v} \|_{K}.
\end{equation} 

\medskip 
\noindent\textbf{Step 2.} 
On the other hand, let us consider the problem of finding $\tilde{v}_2 \in H_0^1(K) \cap H^2(K)$ such that 
\[ \Delta^2\tilde{v}_2 = \Delta^2\tilde{v} \quad \text{in $K$}; \qquad \partial_{\nn\nn}(\tilde{v}_2) = \partial_{\nn\nn}\tilde{v}\quad \text{on $\partial K$}.\]
Let $\tilde{v}_1 = \tilde{v}-\tilde{v}_2$. Then, it follows that
\[ \Delta^2\tilde{v}_1 = 0 \quad \text{in $K$}; \qquad \partial_{\nn\nn}(\tilde{v}_1) = 0 \ \text{and} \  \tilde{v}_1 = \tilde{v}\quad \text{on $\partial K$}.\]
Next we recall that for any $w\in S(K)$ (cf. \eqref{def:Sk}) we have $a^K(\tilde{v}_1,\tilde{v}_1-w) = 0$. From the proof of Lemma~\ref{lem:inverse} we 
also recall that $Q_K\tilde{v}_1$ is well-defined and 
\[Q_K\tilde{v}_1 = \tilde{v}_1 = \tilde{v} \quad \text{on $\partial K$}.\]
The triangle inequality and  Lemma~\ref{lem:poin} for $\tilde{v}_1 - Q_K \tilde{v}_1$ (applies from the definition of $Q_K$) result in
\begin{align*}
	\|\tilde{v}_1\|_{K}  \leq \|\tilde{v}_1 - Q_K\tilde{v}_1\|_{K} + \|Q_K\tilde{v}_1\|_{K} & \lesssim h_K^2 |\tilde{v}_1 - Q_K \tilde{v}_1|_{2,K}+ \|Q_K\tilde{v}_1\|_{K} \\&\lesssim h_K^2 |\tilde{v}_1 |_{2,K}+ \|Q_K\tilde{v}_1\|_{K}. 
\end{align*}
From the proof of Lemma~\ref{lem:inverse}, we can infer that $|\tilde{v}_1|_{2,K} \lesssim h_K^{-3/2}\|\tilde{v}_1\|_{\partial K}$ and $\|Q_K\tilde{v}_1\|_{K} \lesssim \|\tilde{v}_1\|_{\partial K}$. This in the previous bound results in $\|\tilde{v}_1\|_{K}  \lesssim h_K^{1/2} \|\tilde{v}_1\|_{\partial K}$.

Recall that $\Delta^2 \tilde{v}_2 = \Delta^2\tilde{v} =: g_1 \in \mathbb{P}_{k-4}(K)$ and 
$\partial_{\nn\nn}(\tilde{v}_2)|_e = \partial_{\nn\nn}\tilde{v}|_e =: g_2|_e \in \mathbb{P}_{k-2}(e)$ for all $e\in \mathcal{E}_K$. Therefore, after 
expanding $g_1 = \sum_{|\alpha|\leq k-4}g^\alpha_1m_\alpha$ and   $g_2|_e = \sum_{|\beta|\leq k-2}g^\beta_2m_\beta^e$ in terms of the scaled monomials $m_\alpha\in\M_{k-4}(K)$ and $m_\beta^e\in\M_{k-2}(e)$, 
an integration by parts provides
\begin{align*} 
	|\tilde{v}_2|^2_{2,K} &= (\Delta^2\tilde{v}_2,\tilde{v}_2)_{K} +  (\partial_{\nn\nn}(\tilde{v}_2),\partial_{\nn}\tilde{v}_2)_{\partial K} \\&= (g_1, \tilde{v})_{K} -  (g_1, \tilde{v}_1)_{K} +  (g_2, \partial_{\nn}\tilde{v})_{\partial K} - (g_2, \partial_{\nn}\tilde{v}_1)_{\partial K} \\
	& =  \sum_{|\alpha|\leq k-4}g^\alpha_1(m_\alpha,\tilde{v})_{K} + \sum_{|\beta|\leq k-2}g^\beta_2(m_\beta,\partial_{\nn}\tilde{v})_{\partial K} -  (g_1, \tilde{v}_1)_{K} - (g_2, \partial_{\nn}\tilde{v}_1)_{\partial K}.
\end{align*} 
Set the notation $\vec{g}_1 = (g_1^\alpha)_\alpha$, $\vec{g}_2 = (g_2^\beta)_\beta$ and recall from \cite[Lemma~4.1]{chen18} that $h_K\|\vec{g}_1\|_{\ell^2}\lesssim \|g_1\|_K$ and $h_K^{1/2}\|\vec{g}_2\|_{\ell^2}\lesssim \|g_2\|_{\partial K}$. Hence the Cauchy--Schwarz inequality in the previous bound and the definition of DoFs show
\begin{align*} 
	|\tilde{v}_2|^2_{2,K} &\leq \|\vec{g}_1\|_{\ell^2}|K|\|\mathrm{Dof}_K^{k,\mathrm{nc}}(\tilde{v})\|_{\ell^2} + \|\vec{g}_2\|_{\ell^2}\|\mathrm{Dof}_{\partial K}^{k,\mathrm{nc}}(\tilde{v})\|_{\ell^2} \\&\quad+ \|g_1\|_{K}  \|\tilde{v}_1\|_{K}  + \|g_2\|_{\partial K}   \|\partial_{\nn}\tilde{v}_1\|_{\partial K}\\
	&\lesssim h_K \|g_1\|_{K}\|\mathrm{Dof}_K^{k,\mathrm{nc}}(\tilde{v})\|_{\ell^2} + h_K^{-1/2} \|g_2\|_{\partial K}\|\mathrm{Dof}_{\partial K}^{k,\mathrm{nc}}(\tilde{v})\|_{\ell^2} \\
	& \quad +  \|g_1\|_{K}  \|\tilde{v}_1\|_{K}  + \|g_2\|_{\partial K} ((1+\epsilon)h_K^{1/2} |\tilde{v}_1|_{2,K} + C(\epsilon)h_K^{-3/2}\|\tilde{v}_1\|_K)
\end{align*}
with $\|\partial_{\nn}\tilde{v}_1\|_{\partial K}\lesssim h_K^{-1/2} |\tilde{v}_1|_{1,K} + h_K^{1/2} |\tilde{v}_1|_{2,K} \lesssim (1+\epsilon)h_K^{1/2} |\tilde{v}_1|_{2,K} + C(\epsilon)h_K^{-3/2}\|\tilde{v}_1\|_K$ from the trace inequality and Lemma~\ref{lem:huang32} in the last step. Note also that, thanks to \cite{chen20}, we can assert that 
\[ \|g_1\|_{K} = \| \Delta^2 \tilde{v}_2 \|_{K} \lesssim h_K^{-2} |\tilde{v}_2|_{2,K},\]
and using the inverse inequality on $g_2|_e \in \mathbb{P}_{k-2}(e)$, the trace inequality, and Lemma~\ref{lem:poin}, we are left with 
\[
\|g_2\|_{\partial K}  
\lesssim h_K^{-1}  |\tilde{v}_2|_{1,\partial K} 
\lesssim h_K^{-3/2} |\tilde{v}_2|_{1,K} + h_K^{-1/2}  |\tilde{v}_2|_{2,K}
\lesssim h_K^{-1/2}  |\tilde{v}_2|_{2,K}.
\]
Hence, all the above bounds result in 
\[ |\tilde{v}_2|_{2,K} \lesssim h_K^{-1} \|\mathrm{Dof}^{k,\mathrm{nc}}(\tilde{v})\|_{\ell^2} + h_K^{-3/2} \|\tilde{v}\|_{\partial K}.\]
We can then invoke again Lemma~\ref{lem:poin} to obtain $\|\tilde{v}_2\|_{K} \lesssim h_K^2 |\tilde{v}_2|_{2,K}$, and so 
\[ \|\tilde{v}\|_{K} \leq \|\tilde{v}_1\|_{K} + \|\tilde{v}_2\|_{K} \lesssim h_K^{1/2} \|\tilde{v}\|_{\partial K} + h_K  \|\mathrm{Dof}^{k,\mathrm{nc}}(\tilde{v})\|_{\ell^2}.\]
Since $\tilde{v}$ is a polynomial along each $e\in \mathcal{E}_K$, then standard scaling arguments imply that 
$\|\tilde{v}\|_{\partial K} \simeq h_K^{1/2} \|\mathrm{Dof}_{\partial K}^{k,\mathrm{nc}}(\tilde{v})\|_{\ell^2},$
and therefore 
\begin{equation}\label{eq:step22}
	\| \tilde{v} \|_{K} \lesssim h_K \| \mathrm{Dof}^{k,\mathrm{nc}}(\tilde{v})\|_{\ell^2}.
\end{equation} 
Finally, the desired result follows from combining the estimates \eqref{eq:step21} (from Step 1) and \eqref{eq:step22} (from Step 2).  
\end{proof}
\begin{lemma}\label{lem:lem6}
The inverse estimates and norm equivalence results hold for any $v \in  \vhnc(K)$.
\end{lemma}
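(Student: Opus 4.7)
The plan is to transfer the inverse estimates of Lemma~\ref{lem:inverse} and the norm equivalence of Lemma~\ref{lem:norm-equiv} from the original space $\tvhnc(K)$ to the enhanced space $\vhnc(K)$. My first step would be to note that $\vhnc(K)$ and $\tvhnc(K)$ share the same degrees of freedom listed in Table~\ref{table1}, so there is a canonical linear bijection $v \in \vhnc(K) \mapsto \tilde v \in \tvhnc(K)$ characterised by $\mathrm{dof}_j^{k,\mathrm{nc}}(v) = \mathrm{dof}_j^{k,\mathrm{nc}}(\tilde v)$ for every $j$. Because $\pd_k$ depends only on the common DoFs, one has $\pd_k v = \pd_k \tilde v$, and both functions coincide on $\partial K$ together with their normal-derivative moments along each edge.

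Next I would verify that the $L^2$-projection onto $\p_k(K)$ is also preserved, namely $\Pi_k v = \Pi_k \tilde v$. For $v \in \vhnc(K)$ the enhancement condition $(v - \pd_k v,\chi)_K = 0$ for $\chi\in \p_k(K)\setminus \p_{k-4}(K)$ determines the moments of $v$ against the higher-degree monomials from $\pd_k v$, while the moments against $\p_{k-4}(K)$ are shared cell DoFs. The analogous property for $\tilde v$ follows from the constraint $\Delta^2 \tilde v \in \p_{k-4}(K)$ together with integration by parts against a biharmonic polynomial basis. Given this, the results for $v \in \vhnc(K)$ reduce to the corresponding statements for $\tilde v$ provided one proves the norm equivalences $\|v\|_K \simeq \|\tilde v\|_K$ and $|v|_{j,K} \simeq |\tilde v|_{j,K}$ for $j=1,2$.

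To establish these equivalences, I would repeat the two-step structure of the proof of Lemma~\ref{lem:norm-equiv}. Step~1, which bounds $\|\mathrm{Dof}^{k,\mathrm{nc}}(v)\|_{\ell^2}$ by $h_K^{-1}\|v\|_K$, transfers verbatim since it uses only trace inequalities, Lemma~\ref{lem:huang32}, and the fact that $v|_e \in \p_k(e)$, all of which hold equally in $\vhnc(K)$. Step~2 proceeds by decomposing $v = v_1 + v_2$ with $v_2 \in H_0^1(K)\cap H^2(K)$ solving the biharmonic problem carrying the data of $v$, and controlling $v_1$ through its lift $Q_K v_1 \in S(K)$ exactly as before; combined with Lemma~\ref{lem:poin} this yields $\|v\|_K \lesssim h_K\|\mathrm{Dof}^{k,\mathrm{nc}}(v)\|_{\ell^2}$. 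The inverse bound $|v|_{2,K}\lesssim h_K^{-2}\|v\|_K$ then follows by combining Lemma~\ref{lem:inverse} applied to $\tilde v$ with the just-established norm equivalence, and $|v|_{1,K}\lesssim h_K^{-1}\|v\|_K$ is an immediate consequence of Lemma~\ref{lem:huang32}.

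The hard part will be replacing the use of Lemma~\ref{lem:huang33-34} in Step~2: since $\Delta^2 v \in \p_k(K)$ rather than $\p_{k-4}(K)$, one cannot directly extract $p \in \p_k(K)$ with $\Delta^2 v = \Delta^2 p$ and invoke the resulting bounds. The enhancement condition is precisely what fills this gap, as it forces the moments of $v$ against monomials of degrees $k-3,\dots,k$ to agree with those of $\pd_k v$, a polynomial of degree $k$ computable from the DoFs and satisfying the same scaling inequalities. Packaging this observation carefully should permit the original-space arguments to run through essentially unchanged for the enhanced space, completing the proof.
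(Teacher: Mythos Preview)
Your plan has a genuine gap that makes the argument circular. You claim that Step~1 of the norm-equivalence proof (bounding $\|\mathrm{Dof}^{k,\mathrm{nc}}(v)\|_{\ell^2}$ by $h_K^{-1}\|v\|_K$) ``uses only trace inequalities, Lemma~\ref{lem:huang32}, and the fact that $v|_e\in\p_k(e)$''. But look again at the proof of Lemma~\ref{lem:norm-equiv}: every DoF bound there finishes by invoking the inverse estimate from Lemma~\ref{lem:inverse} (to kill the residual $|\tilde v|_{1,K}$ or $|\tilde v|_{2,K}$ term that Lemma~\ref{lem:huang32} and the trace inequality leave behind). So Step~1 does \emph{not} transfer verbatim to $\vhnc(K)$ before you have the inverse estimate there. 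Conversely, your route to the inverse estimate --- ``combine Lemma~\ref{lem:inverse} applied to $\tilde v$ with the just-established norm equivalence'' --- only yields $|\tilde v|_{2,K}\lesssim h_K^{-2}\|v\|_K$; you never explain how to pass from $|\tilde v|_{2,K}$ to $|v|_{2,K}$. There is no mechanism in your outline to control $|v-\tilde v|_{2,K}$, and without it the two halves of your argument each wait on the other.

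The paper breaks this circularity with one extra ingredient you are missing: since $v$ and $\tilde v$ share all boundary DoFs, an integration by parts gives
\[
|v-\tilde v|_{2,K}^2 = a^K(v-\tilde v,\,v-\tilde v) = (\Delta^2(v-\tilde v),\,v-\tilde v)_K,
\]
and because $\Delta^2(v-\tilde v)\in\p_k(K)$, a polynomial inverse estimate yields $|v-\tilde v|_{2,K}\lesssim h_K^{-2}\|v-\tilde v\|_K$. Combined with Lemma~\ref{lem:inverse} for $\tilde v$ and Lemma~\ref{lem:norm-equiv} (giving $\|\tilde v\|_K\simeq h_K\|\mathrm{Dof}^{k,\mathrm{nc}}(v)\|_{\ell^2}$), one repeats the Step~1 DoF bounds for $v$ \emph{with the residual $\epsilon h_K|v|_{2,K}$ terms left in}, obtains $|v|_{2,K}\lesssim h_K^{-2}\|v\|_K+\epsilon|v|_{2,K}$, and absorbs. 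Only after the inverse estimate is secured does Step~1 become available for the norm-equivalence proof, and Step~2 then runs with the enhancement condition handling the $\p_k(K)\setminus\p_{k-4}(K)$ moments exactly as you anticipated. (A side remark: your claim that $\Pi_k v=\Pi_k\tilde v$ is false --- the original space $\tvhnc(K)$ carries no cell-moment constraint against $\p_k(K)\setminus\p_{k-4}(K)$ --- but the proof does not need it; only $\pd_k v=\pd_k\tilde v$ is used.)
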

\begin{proof}
Given $v\in \vhnc(K)$, construct $\tilde{v}\in \tvhnc(K)$  with $\mathrm{Dof}^{k,\mathrm{nc}}(v) = \mathrm{Dof}^{k,\mathrm{nc}}(\tilde{v})$.  Such $\tilde{v}$ can be found since the DoFs of both local VE spaces $\vhnc(K)$ and $\widetilde{V}_h^{k,\rm{nc}}(K)$ coincide. Starting from the triangle inequality
$ |v|_{2,K}\leq |v-\tilde{v}|_{2,K}+|\tilde{v}|_{2,K},$
we apply integration by parts, the Cauchy--Schwarz inequality, and the inverse inequality on $\Delta^2(v-\tilde{v}) \in \mathbb{P}_k(K)$ to obtain 
\begin{align*}
	|v-\tilde{v}|^2_{2,K}  = a^K(v-\tilde{v},v-\tilde{v}) 
	&= \int_K \Delta^2(v-\tilde{v})\,(v-\tilde{v})\dx \\
	& \leq \|\Delta^2(v-\tilde{v})\|_{K}\|v-\tilde{v}\|_{K} \lesssim h_K^{-2} | v-\tilde{v}|_{2,K} \|v-\tilde{v}\|_{K}.
\end{align*}
This bound together with the triangle inequality show that $|v-\tilde{v}|_{2,K} \lesssim h_K^{-2}(\|v\|_{K} + \|\tilde{v}\|_{K})$. Combining this last bound with Lemma~\ref{lem:inverse}-\ref{lem:norm-equiv} readily implies that 
\[ |v|_{2,K} \lesssim h_K^{-2} (\|v\|_{K} + \|\tilde{v}\|_{K}), \qquad \|\tilde{v}\|_{K} \simeq h_K \| \mathrm{Dof}^{k,\mathrm{nc}}(\tilde{v})\|_{\ell^2} = h_K \| \mathrm{Dof}^{k,\mathrm{nc}}(v)\|_{\ell^2}.\]
Then we can follow the arguments developed in the proof of Lemma~\ref{lem:inverse} to get 
\[|v|_{2,K} \lesssim h_K^{-2} \|v\|_{K} + \epsilon |v|_{2,K} ,\]
and the proof of the inverse estimate is concluded after absorbing the $\epsilon$ term on the left-hand side.
Regarding the norm equivalence result, we note that 
steps 1 and 2 from the proof of Lemma~\ref{lem:norm-equiv} apply to any $v\in \vhnc(K)$, and  decompose $v= v_1+v_2$. The proof follows analogously  only differing in the presence of the 
term $\Delta^2{v}_2 =\Delta^2{v}=: g_1 \in \mathbb{P}_k(K)$, now written as 
\[ (\Delta^2v_2,v)_{K} = \sum_{|\alpha|\leq k-4}g^\alpha_1(m_\alpha,{v})_{K} + \sum_{k-4 \leq |\alpha|\leq k}g^\alpha_1(m_\alpha,v)_{ K}.\]
The first term on the right-hand side is treated just as in Lemma~\ref{lem:norm-equiv}. For the second term we can apply the definition of the space  $\vhnc(K)$ and the Cauchy--Schwarz inequality to obtain the estimate 
\begin{align*} \sum_{k-4\leq |\alpha|\leq k}g^\alpha_1(m_\alpha,v)_{ K} &=  \sum_{k-4 \leq |\alpha|\leq k}g^\alpha_1(m_\alpha,\pd_kv)_{K} \\&\leq h_K^{-1}\|g_1\|_{K}\|m_\alpha\|_K\|\pd_kv\|_{K}\approx\|g_1\|_{K}\|\pd_kv\|_{K}\end{align*}
with the observation $\|m_\alpha\|_K\approx h_K$ in the last step. Since $\pd_kv$ is uniquely determined by the DoFs of $v$ and $\mathrm{Dof}^{k,\mathrm{nc}}(v) = \mathrm{Dof}^{k,\mathrm{nc}}(\tilde{v})$, we can conclude that 
$ \pd_kv = \pd_k\tilde{v}$. This and the triangle inequality show $\|\pd_kv\|_{K} = \|\pd_k\tilde{v}\|_{K} \leq \|\pd_k\tilde{v} - \tilde{v}\|_{K} + \| \tilde{v}\|_{K}$. The Poincar\'e--Friedrichs inequality and the inverse inequality provide $\|\pd_k\tilde{v} - \tilde{v}\|_{K} \lesssim \|\tilde{v}\|_{K}$. These bounds together with Lemma~\ref{lem:norm-equiv}  establish 
\begin{align*}
	\|\pd_kv\|_{K} & \lesssim \| \tilde{v}\|_{K}  \lesssim h_K \|\mathrm{Dof}^{k,\mathrm{nc}}(\tilde{v})\|_{\ell^2} = h_K \|\mathrm{Dof}^{k,\mathrm{nc}}({v})\|_{\ell^2},
\end{align*}
and this observation concludes the proof of norm equivalence. 
\end{proof} 
\subsection{Modified companion map}\label{subsec:modified_comp}
In this subsection, we consider the lowest-order ($k=2$) nonconforming VE space $V_h^{2,\mathrm{nc}}$ and the aim is to modify the companion operator $J_1v_h$ for $v_h\in V_h^{2,\mathrm{nc}}$ from Theorem~\ref{thm:J1} so that the new companion $J_2^*v_h$ satisfies the $H^1$-orthogonality $\nabla (v_h-J_2^*v_h)\perp (\p_0(\cT_h))^2$ in addition to the $H^2$- and $L^2$-orthogonalities established in Theorem~\ref{4.2.b}-\ref{4.2.c}.

If  $e\in\e^i$ is an interior edge, we assume that it's shared by two triangles $T^+\subset K^+$ and $T^-\subset K^-$ inside two neighbouring polygons $K^+$ and $K^-$, and set $\omega_e:=K^+\cup K^-$, and if $e\in\e^b$ is a boundary edge, we assume that it only belongs to a triangle $T^+\subset K^+$ and set $\omega_e:=K^+$. Let $\psi_e,\phi_e\in H^2_0(T^+\cup T^-)$ be two edge bubble-functions from \cite{brenner2010posteriori,chen2022} satisfying the following properties:
\begin{itemize}
\item $\dashint_e\psi_e\;\ds=1, |\psi_e|_{2,T^{\pm}}\approx h_e^{-1}$,
\item $\phi_e\equiv 0$ on $\partial T^{\pm}$, $\int_e\partial_{\nn} \phi_e\ds = 1, |\phi_e|_{2,T^{\pm}}\approx h_e^{-1}$.
\end{itemize} 
\textbf{Step 1}. Given $J_1v_h$ and the bubble-function $\psi_e$, define
\begin{align}
J_1^*v_h := J_1v_h + \sum_{e\in\e}\Big(\dashint_e(v_h-J_1v_h)\ds\Big)\psi_e\in V. 
\end{align}
Observe from the definition of $J_1$ and $\psi_e$ that $J_1^*v_h(z) = v_h(z)$ for any $z\in \mathcal{V}$ and $\dashint_e (v_h-J_1^*v_h)\ds=0$ for any $e\in\e$.  The Cauchy--Schwarz inequality and the scaling of $\psi_e$ from above show that
\begin{align*}
&\Big|\Big(\dashint_e(v_h-J_1v_h)\ds\Big)\Big|\|\psi_e\|_{\omega_e}\lesssim h_e^{-3/2}\|v_h-J_1v_h\|_e\\&\qquad\lesssim h_e^{-2}\|v_h-J_1v_h\|_{\omega_e}+h_e^{-1}|v_h-J_1v_h|_{1,\omega_e}\lesssim |v_h-J_1v_h|_{2,\omega_e}
\end{align*}
with the trace inequality and Theorem~\ref{thm:J1} in the last two steps. This proves that $|v_h-J_1^*v_h|_{2,h}\lesssim |v_h-J_1v_h|_{2,h}$. Similarly the scaling $|\psi_e|_{1,T^{\pm}}\approx 1$ and Theorem~\ref{4.1.c} result in $|v_h-J_1^*v_h|_{1,h}\lesssim h|v_h-J_1v_h|_{2,h}$. 

\smallskip 
\noindent\textbf{Step 2}. Given $J_1^*v_h$ and the bubble-function $\phi_e$, define
\begin{align}
J_1^{**}v_h := J_1^*v_h + \sum_{e\in\e}\Big(\int_e\partial_{\nn}(v_h-J_1^*v_h)\ds\Big)\phi_e\in V. 
\end{align}
Since $\phi_e|_e\equiv 0$, we have $J_1^{**}v_h(z) = v_h(z)$ for any $z\in \mathcal{V}$ and $\dashint_e (v_h-J_1^{**}v_h)\ds=0$ for any $e\in\e$. Note from  $\int_e\partial_{\nn}\phi_e\ds=1$ that $\int_e\partial_{\nn}(v_h-J_1^{**}v_h)\ds=0$. Again as in Step 1, it is easy to prove that 
\begin{align*}\Big|\Big(\dashint_e\partial_{\nn}(v_h-J_1^*v_h)\ds\Big)\Big|\|\phi_e\|_{\omega_e}\lesssim |v_h-J_1^*v_h|_{2,\omega_e},\end{align*}
and consequently,
\[|v_h-J_1^{**}v_h|_{2,h}\lesssim |v_h-J_1^*v_h|_{2,h}\lesssim |v_h-J_1v_h|_{2,h}.\]
\textbf{Step 3}. Next we construct the operator $J_2^*$ and the design employs the tools from the construction of $J_2$.   Recall the element bubble-function $b_h|_K=b_K\in H^2_0(K)$ and suppose $v_2\in\p_2(K)$ is the Riesz representative of the linear functional $\p_2(K)\to\mathbb{R},$ defined by, $w_2\mapsto(v_h-J_1^{**}v_h,w_2)_K$, for $w_2\in\p_2(K)$ in the Hilbert space $\p_2(K)$ endowed with the weighted scalar product $(b_K\bullet,\bullet)_K$. Given $v_h\in V_h^{2,\mathrm{nc}}$,  the function $\tilde{v}_2\in\p_2(\cT_h)$ with $\tilde{v}_2|_K:=v_2$   satisfy $
(b_h\tilde{v}_2,w_2)_\Omega=(v_h-J_1^{**}v_h,w_2)_\Omega$ for all $w_2\in\p_2(\cT_h) $
and define
\begin{align}
J_2^*v_h:=J_1^{**}v_h+b_h\tilde{v}_2\in V.\label{def:J2*}
\end{align}
\begin{theorem}
\label{thm:J2*}
The  modified conforming companion operator $J_2^*$ satisfies the following properties.
\begin{enumerate}[before=\leavevmode,label=\upshape(\alph*),ref=\thetheorem (\alph*)]
	\item\label{6.1.a} $\mathrm{dof}_j^{2,\mathrm{nc}} (J_2^*v_h) = \mathrm{dof}_j^{2,\mathrm{nc}} (v_h)$ for all $j=1,\dots,N_2^{2,\mathrm{nc}}$,
	\item\label{6.1.b} $\nabla^2(v_h-J_2^*v_h)\perp(\p_0(\cT_h))^{2\times 2}$ in $(L^2(\Omega))^{2\times 2}$,
	\item\label{6.1.c} $\nabla(v_h-J_2^*v_h)\perp(\p_0(\cT_h))^2$ in $(L^2(\Omega))^2$,
	\item\label{6.1.d} $v_h-J_2^*v_h\perp \p_2(\cT_h)$ in $L^2(\Omega)$,
	\item\label{6.1.e} $\displaystyle \sum_{j=0}^2 h^{j-2}|v_h-J_2^*v_h|_{j,h}\lesssim \inf_{\chi\in \p_2(\cT_h)}|v_h-\chi|_{2,h}+\inf_{v\in V}|v_h-v|_{2,h}.$
\end{enumerate}
\end{theorem}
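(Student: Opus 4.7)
My plan is to dispatch (a)–(d) essentially by bookkeeping, using the properties of the three bubble-type corrections spelled out in Steps~1–3 preceding the statement, and then spend the bulk of the effort on the best-approximation estimate (e). First I would record that for $k=2$ the local DoFs of $V_h^{2,\mathrm{nc}}(K)$ consist solely of the vertex values ($\mathbb{D}1^\star$) and the edge averages $\int_e\partial_{\nn}v_h\,\ds$ ($\mathbb{D}2^\star$ with $\chi\equiv 1$), since ($\mathbb{D}3^\star$) and ($\mathbb{D}4^\star$) are vacuous. The excerpt already verifies that $J_1^{**}v_h$ preserves both families of DoFs thanks to the normalisations $\dashint_e\psi_e\,\ds=1$, $\int_e\partial_{\nn}\phi_e\,\ds=1$ and the boundary-vanishing of $\psi_e,\phi_e$ on $\partial(T^+\cup T^-)$. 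Since $b_K\in H^2_0(K)$ vanishes together with its normal derivative on $\partial K$, the final correction $+\,b_h\tilde{v}_2$ touches neither vertex values nor edge normal moments, proving (a).

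For (b), let $\bchi\in(\p_0(\cT_h))^{2\times 2}$ and set $w:=v_h-J_2^\ast v_h$. Because $\bchi|_K$ is a constant matrix, an integration by parts on each $K$ gives
\[
(\nabla^2 w,\bchi)_K=\int_{\partial K}\bchi\,\nabla w\cdot\nn\,\ds=\sum_{e\in\e_K}\Bigl[(\bchi\nn)\!\cdot\!\nn_e\!\int_e\!\partial_{\nn}w\,\ds+(\bchi\nn)\!\cdot\!\bt_e\,[w(B)-w(A)]\Bigr],
\]
and both contributions vanish by (a). Statement (c) is the same computation one order lower: for $\bd\in(\p_0(\cT_h))^2$ we get $(\nabla w,\bd)_K=\sum_{e\in\e_K}(\bd\cdot\nn_e)\int_e w\,\ds$, and the edge averages of $w$ are zero because Step~1 imposes $\int_e(v_h-J_1^\ast v_h)\,\ds=0$ while Steps~2 and 3 leave these averages unchanged (the bubbles $\phi_e$ and $b_K$ both vanish on every $e\in\e$). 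Finally, (d) is immediate by design: $\tilde{v}_2|_K$ is the Riesz representative of $w_2\mapsto(v_h-J_1^{\ast\ast}v_h,w_2)_K$ in $(\p_2(K),(b_K\bullet,\bullet)_K)$, so
\[(v_h-J_2^\ast v_h,w_2)_K=(v_h-J_1^{\ast\ast}v_h,w_2)_K-(b_K\tilde{v}_2,w_2)_K=0\quad\forall\,w_2\in\p_2(K).\]

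The main obstacle is (e), but the plan is naturally structured as a chain of three stability estimates. For Step~1, applying Cauchy–Schwarz and the trace inequality to the edge averages $\dashint_e(v_h-J_1v_h)\,\ds$, together with the scaling $|\psi_e|_{2,\omega_e}\simeq h_e^{-1}$ and Theorem~\ref{4.1.c}, delivers $|v_h-J_1^\ast v_h|_{2,h}\lesssim|v_h-J_1v_h|_{2,h}$; the analogous bounds in the $L^2$- and $H^1$-seminorms follow from $|\psi_e|_{j,\omega_e}\simeq h_e^{1-j}$. Step~2 proceeds identically with $\phi_e$ replacing $\psi_e$ and the normal-moment pre-factors controlled by the trace inequality. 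Step~3 is handled exactly as in Theorem~\ref{4.2.d}: the two-sided bubble inverse estimate \eqref{bubble-inverse} applied to $b_K\tilde{v}_2$ gives $|b_h\tilde{v}_2|_{2,h}\lesssim h^{-2}\|\tilde{v}_2\|_\Omega$, while the Riesz identity with $w_2=\tilde{v}_2$ and the positivity of $(b_K\bullet,\bullet)_K$ yields $\|\tilde{v}_2\|_K\lesssim\|v_h-J_1^{\ast\ast}v_h\|_K$, which is in turn dominated by the $H^2$-stability already obtained. The lower seminorms are then absorbed by the Poincar\'e–Friedrichs inequality, and a final application of Theorem~\ref{4.1.c} converts the right-hand side into the double infimum over $\p_2(\cT_h)$ and $V$. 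The technical point I expect to be most delicate is verifying simultaneously, through all three corrections, that each new term preserves the DoFs and orthogonalities already achieved while remaining bounded by the previous $H^2$-error; this is precisely where the bubble inverse inequalities \eqref{bubble-inverse} and the vanishing of $b_K$, $\psi_e$, $\phi_e$ on the relevant portions of $\partial K$ play their role.
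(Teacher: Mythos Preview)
Your proposal is correct and follows essentially the same approach as the paper: the paper proves only part (c) in detail via the same integration-by-parts identity you use (noting that $b_K=0$ on $\partial K$ gives $\dashint_e J_2^\ast v_h\,\ds=\dashint_e v_h\,\ds$), and defers (a), (b), (d), (e) to the analogous arguments for $J_2$ in Theorem~\ref{thm:J2}, which are precisely the bookkeeping and stability-chain arguments you spell out. Your direct verification of (b) via the first-order integration by parts $(\nabla^2 w,\bchi)_K=\int_{\partial K}\nn^{\!\top}\bchi\nabla w\,\ds$ is a harmless simplification of the fourth-order formula used in Theorem~\ref{4.1.b}, available here because $\nabla^2\chi$ is constant for $\chi\in\p_2(K)$.
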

\begin{proof}
We provide only the proof of modified property \ref{6.1.c} and the  remaining properties  follow analogously as in the proof of Theorem~\ref{thm:J2}. Since $b_K =0 $ on $\partial K$, it results from that definition of $J_2^*$ and $J_1^{**}$ that $\dashint_e J_2^*v_h\ds= \dashint_eJ_1^{**}v_h\ds=\dashint_e v_h\ds$. Hence, for $\vec{\chi}\in(\p_0(K))^2$ and for any $K\in\cT_h$, an integration by parts   shows 
\[(\nabla(v_h-J_2^*v_h),\vec{\chi})_K=-(v_h-J_2^*v_h,\text{div}(\vec{\chi}))_K+(v_h-J_2^*v_h,\vec{\chi}\cdot\nn_K)_{\partial K}=0,\]
and therefore it concludes the proof of Theorem~\ref{6.1.c}.
\end{proof}
\subsection{Reliability}\label{subsec:rel}
Recall $(u_h,p_h)$ is the nonconforming VE solution to the discrete problem and the notation $  T(\bullet)=\partial_{\nn}(\Delta\bullet+\partial_{\btau\btau}\bullet)$. We proceed to define the local contributions for the estimator 
\begin{align*}
\eta_{1,K}^2&:=h_K^4\|\tilde{f}-\Pi_k\tilde{f}\|_K^2+h_K^4\|\tilde{f}-\Delta^2\pd_ku_h-\Pi_ku_h-\alpha\nabla\cdot\Pi_{\ell-1}\nabla p_h\|_K^2,\\
\eta_{2,K}^2&:=h_K^2\|\tilde{g}-\Pi_\ell\tilde{g}\|_K^2+h_K^2\|\tilde{g}+\gamma\nabla\cdot\Pi_{\ell-1}\nabla p_h-\beta\Pi_\ell p_h+\alpha\nabla\cdot\Pi_{k-1}\nabla u_h\|^2_K,\\
\eta_{3,K}^2&:=\sum_{e\in\e^i_K\cup\e^s_K}h_e\|[\partial_{\nn\nn}(\pd_ku_h)]\|^2_e,\\
\eta_{4,K}^2&:=\sum_{e\in\e^i_K}h_e^3\|[T(\pd_k u_h)+\alpha\Pi_{\ell-1}\nabla p_h\cdot\nn]\|^2_e,\\
\eta_{5,K}^2&:=\sum_{e\in\e^i_K\cup\e^c_K}h_e\|[\alpha\Pi_{k-1}\nabla u_h\cdot\nn+\gamma\Pi_{\ell-1}\nabla p_h\cdot\nn]\|^2_e,\\
\eta_{6,K}^2&:=(1+\alpha^{1/2}h_K+h_K^2)^2S_{\nabla^2}^K((1-\pd_k)u_h,(1-\pd_k)u_h)\\&\qquad+S_{\nabla}^K((1-\pg_\ell)p_h,(1-\pg_\ell)p_h)+S_{2,0}^K((1-\Pi_\ell)p_h,(1-\Pi_\ell)p_h),\\
\eta_{7,K}^2&:=\alpha\|\text{Dof}^{k,\rm{nc}}(u_h-\pg_\ell u_h)\|^2_{\ell^2},\\
\eta_{8,K}^2&:=\sum_{e\in\e_K}h_e^{-1}(\|[\nabla\pd_ku_h]\|^2_e+\|[\Pi_\ell p_h]\|^2_e).
\end{align*}
Denote $\eta_i^2:= \sum_{K\in\cT_h} \eta_{i,K}^2$ for $i=1,\dots,8$ and the following theorem shows that the sum of these contributions form an upper bound for the  error in energy norm.
\begin{theorem}\label{thm:reliability}
With the aforementioned notation, there holds, for $k=2$ and $\ell=1$, 
\[\|\vec{\bu}-\vec{\bu}_h\|_{\bH_\epsilon^h}^2 \lesssim \eta^2:=\sum_{i=1}^8\eta_i^2.\]
\end{theorem}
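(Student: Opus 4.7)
The plan is to split the error through conforming companion operators and then identify each indicator as the dual pairing of a residual with a test function. Set $\phi:=J_2^{*}u_h\in V$ and $\psi:=J_4 p_h\in Q$ so that $\vec{\boldsymbol{\phi}}:=(\phi,\psi)\in V\times Q$. By the triangle inequality,
\begin{equation*}
\|\vec{\bu}-\vec{\bu}_h\|_{\bH_\epsilon^h}\lesssim \|\vec{\bu}-\vec{\boldsymbol{\phi}}\|_{\bH_\epsilon}+\|\vec{\boldsymbol{\phi}}-\vec{\bu}_h\|_{\bH_\epsilon^h},
\end{equation*}
and I would treat the two summands by separate arguments.

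For the nonconforming part $\|\vec{\boldsymbol{\phi}}-\vec{\bu}_h\|_{\bH_\epsilon^h}$, I would invoke the best-approximation estimates in Theorems~\ref{6.1.e} and \ref{4.4.d}. The choice $\chi=\pd_k u_h$ (resp.~$\pg_\ell p_h$) in the first infimum, together with the stabilisation lower bounds \eqref{s1}--\eqref{s4}, produces the stabilisation indicator $\eta_6$. For the second infimum $\inf_{v\in V}|u_h-v|_{2,h}$, I would pass through $\pd_k u_h$ and invoke the standard trick that the $H^2$-distance from a piecewise polynomial to $V$ is controlled by the normal and tangential jumps; scaling these jumps yields the $\|[\nabla\pd_k u_h]\|_e$ terms in $\eta_8$. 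The analogous manipulation for $p_h$ delivers the $\|[\Pi_\ell p_h]\|_e$ contribution of $\eta_8$. The Poincar\'e--Friedrichs inequality then upgrades the seminorm bounds to the full $\bH_\epsilon^h$-norm.

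For the conforming part I would use the coercivity from Theorem~\ref{theo:iso}: setting $\vec w:=\vec{\bu}-\vec{\boldsymbol{\phi}}\in V\times Q$ one has $\|\vec w\|_{\bH_\epsilon}^2=\cF(\vec w)-\langle\cA(\vec{\boldsymbol{\phi}}),\vec w\rangle$. Introducing a nonconforming interpolant $\vec v_h=(v_h,q_h)$ of $\vec w$ and subtracting the discrete equation \eqref{eq:operator_discrete_nc} gives
\begin{equation*}
\|\vec w\|_{\bH_\epsilon}^2=\bigl[\cF(\vec w)-\cF_h^{\mathrm{nc}}(\vec v_h)\bigr]+\bigl[\langle\cA_h^{\mathrm{nc}}(\vec\bu_h),\vec v_h\rangle-\langle\cA(\vec{\boldsymbol{\phi}})\vec w\rangle\bigr].
\end{equation*}
The first bracket splits, after inserting $\Pi_k$ and $\Pi_\ell$, into data-oscillation contributions (the $\|\tilde f-\Pi_k\tilde f\|_K$ and $\|\tilde g-\Pi_\ell\tilde g\|_K$ parts of $\eta_1,\eta_2$) and a term which, combined with the second bracket, yields the volume residuals. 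I would then expand $\vec{\boldsymbol{\phi}}=\vec\bu_h+(\vec{\boldsymbol{\phi}}-\vec\bu_h)$ in $\cA$, rewrite $\cA_h^{\mathrm{nc}}$ in terms of the projections $\pd_k,\pg_\ell,\Pi_{k-1},\Pi_{\ell-1}$, and perform a piecewise integration by parts against $w_1$ and $w_2$. The bulk integrands deliver the element residuals of $\eta_1$ and $\eta_2$; the biharmonic boundary terms, restricted to $\e^i\cup\e^s$ by the boundary conditions on $V$, produce the jumps $[\partial_{\nn\nn}\pd_k u_h]$ (giving $\eta_3$) and $[T(\pd_k u_h)+\alpha\Pi_{\ell-1}\nabla p_h\cdot\nn]$ (giving $\eta_4$); the Poisson-type boundary terms on $\e^i\cup\e^c$ produce the flux jumps of $\eta_5$. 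The stabilisation mismatch $a_1^h-a_1$ and $a_3^h-a_3$ is absorbed into $\eta_6$ after adding and subtracting $\pd_k$ and $\pg_\ell$. Finally, the coupling term $a_2(p,\cdot)-a_2^h(p_h,\cdot)$, after using Theorem~\ref{4.2.a}--\ref{4.2.c} and the orthogonality $\Pi_{\ell-1}\nabla\perp$ consistent polynomials, leaves a residual involving $u_h-\pg_\ell u_h$, which by the local norm equivalence Lemma~\ref{lem:norm-equiv} contributes $\eta_7$. All test-function factors are then bounded by $\|\vec w\|_{\bH_\epsilon}$ (with the aid of Lemma~\ref{lem:huang32} and trace inequalities), and division completes the estimate.

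The main obstacle is the careful bookkeeping of the biharmonic integration by parts in the presence of mixed boundary conditions: one must verify that only the jumps on $\e^i\cup\e^s$ (respectively $\e^i\cup\e^c$ for the Poisson side) survive, and that the cross terms between $a_1$, $a_2$, $a_2^h$, $a_3$ conspire correctly to yield precisely the indicator $\eta_7$ without leaving uncontrolled residues. A secondary difficulty is the design of the interpolant $\vec v_h$ so that $\cF_h^{\mathrm{nc}}(\vec v_h)-\cF(\vec w)$ reduces cleanly to the oscillations of $\tilde f$ and $\tilde g$; this should follow from the $L^2$-orthogonality properties built into the companion operators $J_2^{*}$ and $J_4$.
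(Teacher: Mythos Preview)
Your proposal is correct and follows essentially the same strategy as the paper: split through the companion $(J_2^{*}u_h,J_4p_h)$, invoke coercivity and the discrete equation with a nonconforming interpolant of the error as test function, and then integrate by parts piecewise to extract the bulk residuals, edge jumps, stabilisation, and coupling indicators. The one step you leave implicit---and correctly flag as the secondary difficulty---is that to reduce $\cF(\vec w)-\cF_h^{\mathrm{nc}}(\vec v_h)$ to pure oscillations and to obtain a conforming test function $v\in V$ for the biharmonic integration by parts, the paper applies the companion operator \emph{a second time}, namely to the interpolant $\vec v_h=\vec{\be}_I$ itself, so that Theorem~\ref{6.1.d} and Theorem~\ref{4.4.c} give $(\Pi_k\tilde f,\,e_I^u-J_2^{*}e_I^u)=0$ and $(\Pi_\ell\tilde g,\,e_I^p-J_4e_I^p)=0$; with $v:=e^u-J_2^{*}e_I^u\in V$ and $q:=e^p-J_4e_I^p\in Q$ the remainder of your outline (rearrangement into $T_5,\dots,T_{10}$, Cauchy--Schwarz, trace inequalities, and the norm equivalence for $\eta_7$) matches the paper line for line.
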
 
\begin{proof}
Let $J\vec{\bu}_h:= (J_2^*u_h,J_4p_h)$ and $\vec{\be}:= \vec{\bu}-J\vec{\bu}_h:=(e^u,e^p)\in \bH_\epsilon$ with $\vec{\be}_I:=(e_I^u,e_I^p)$. Even though $J_2^*$  is constructed  for the lowest-order case ($k=2$), we prefer to write the proof with the  notation $k$ and $\ell$ to point out the challenges for general values.  The coercivity of the continuous bilinear form $\cA$ leads to 
\begin{align}
	\|\vec{\be}\|^2_{\bH_\epsilon}&\lesssim\cA(\vec{\bu},\vec{\be})-\cA(J\vec{\bu}_h,\vec{\be})=\cF(\vec{\be})-\cF_h(\vec{\be}_I)+\cA_h(\vec{\bu}_h,\vec{\be}_I)-\cA(J\vec{\bu}_h,\vec{\be})\nonumber.
\end{align}
The identities $(\Pi_k\tilde{f},e_I^u-J_2^*e_I^u)_\Omega=0=(\Pi_\ell\tilde{g},e_I^p-J_4e_I^p)_\Omega$ from  Theorems~\ref{thm:J2*}.d and \ref{thm:J4}.c, and the $L^2$-orthogonality of $\Pi_k$ and $\Pi_\ell$ show
\begin{align*}
	&\cF(\vec{\be})-\cF_h(\vec{\be}_I)=(\tilde{f},e^u)_\Omega-(\Pi_k\tilde{f},J_2^*e_I^u)_\Omega+(\tilde{g},e^p)_\Omega-(\Pi_\ell\tilde{g},J_4e_I^p)_\Omega\nonumber\\&=(\tilde{f},v)_\Omega+(\tilde{f}-\Pi_k\tilde{f},J_2^*e_I^u-\pd_ke_I^u)_\Omega+(\tilde{g},q)_\Omega+(\tilde{g}-\Pi_\ell\tilde{g},J_4e_I^p-\pg_\ell e_I^p)_\Omega
\end{align*}
with $v=e^u-J_2^*e_I^u$ and $q=e^p-J_4e_I^p$. The definitions of $\cA_h$ and $\cA$ provide
\begin{align*}
	&\cA_h(\vec{\bu}_h,\vec{\be}_I)-\cA(J\vec{\bu}_h,\vec{\be})=(a_1^h(u_h,e_I^u)-a_1(J_2^*u_h,e^u))+(a_2(J_4p_h,e^u)-a_2^h(p_h,e_I^u))\nonumber\\&\quad+(a_2^h(e_I^p,u_h)-a_2(e^p,J_2^*u_h))+(a_3^h(p_h,e_I^p)-a_3(J_4p_h,e^p))=:T_1+T_2+T_3+T_4.
\end{align*}
Theorem~\ref{6.1.b}-\ref{6.1.d} imply $(\Pi_{k-2}\nabla^2u_h,\Pi_{k-2}\nabla^2e_I^u)_\Omega=(\nabla^2_\pw\pd_ku_h,\nabla^2J_2^*e_I^u)_\Omega$ \\ and $(\Pi_ku_h,\Pi_ke_I^u)_\Omega=(\Pi_ku_h,J_2^*e_I^u)_\Omega$. This  results in 
\begin{align}
	T_1&=-(\Pi_ku_h,v)_\Omega+(\Pi_ku_h-J_2^*u_h,e^u)_\Omega-(\nabla^2_\pw\pd_ku_h,\nabla^2_\pw v)_\Omega\nonumber\\&\quad+(\nabla^2_\pw(\pd_ku_h-J_2^*u_h),\nabla^2e^u)_\Omega+S_{1,0}((1-\Pi_k)u_h,(1-\Pi_k)e_I^u)\nonumber\\&\quad+S_{\nabla^2}((1-\pd_k)u_h,(1-\pd_k)e_I^u). \nonumber
\end{align}
An integration by parts  and the fact that $v\in V$, lead to
\begin{align*}-(\nabla^2_\pw\pd_ku_h,\nabla^2_\pw v)_\Omega&=-\sum_{K\in\cT_h}(\Delta^2\pd_ku_h,v)_K-\sum_{e\in\e^i\cup \e^s}([\partial_{\nn\nn}(\pd_ku_h)],\partial_{\nn}v)_e\nonumber\\&\quad-\sum_{e\in\e^i}( [T(\pd_ku_h)],v)_e.\end{align*}
This simplifies $T_1$ to
\begin{align*}
	T_1&=\sum_{K\in\cT_h}\Big((-\Pi_ku_h-\Delta^2\pd_ku_h,v)_K+(\Pi_ku_h-J_2^*u_h,e^u)_K\nonumber\\&\qquad+(\nabla^2(\pd_ku_h-J_2^*u_h),\nabla^2 e^u)_K+S_{1,0}^K((1-\Pi_k)u_h,(1-\Pi_k)e_I^u)\nonumber\\&\qquad+S_{\nabla^2}^K((1-\pd_k)u_h,(1-\pd_k)e_I^u)\Big)-\sum_{e\in\e^i\cup \e^s}([\partial_{\nn\nn}(\pd_ku_h)],\partial_{\nn}v)_e\nonumber\\&\qquad-\sum_{e\in\e^i}( [T(\pd_ku_h)],v)_e.
\end{align*}
Theorem~\ref{6.1.c} and the $L^2$-orthogonality of $\Pi_{k-1}$ imply $(\Pi_{\ell-1}\nabla p_h,\nabla J_2^*e_I^u-\Pi_{k-1}\nabla e_I^u)_\Omega=0$. This and an integration by parts show
\begin{align}
	\alpha^{-1}T_2&=(\nabla J_4p_h-\Pi_{\ell-1}\nabla p_h,\nabla e^u)_\Omega-\sum_{K\in\cT_h}(\nabla\cdot\Pi_{\ell-1}\nabla p_h,v)_K+\sum_{e\in\e^i}([\Pi_{\ell-1}\nabla p_h\cdot\nn_e],v)_e.\label{6.7}
\end{align}
Theorem~\ref{4.4.b}, the $L^2$-orthogonality of $\Pi_{\ell-1}$, and again an integration by parts prove that
\begin{align}
	\alpha^{-1}T_3&=\sum_{K\in\cT_h}\Big((\Pi_{\ell-1}\nabla e_I^p-\nabla J_4e_I^p,\Pi_{k-1}\nabla u_h-\Pi_{\ell-1}\nabla u_h)_K+(q,\nabla\cdot \Pi_{k-1}\nabla u_h)_K\nonumber\\&\quad+(\nabla e^p,\Pi_{k-1}\nabla u_h-\nabla J_2^*u_h)_K\Big)-\sum_{e\in\e^i\cup\e^c}(q,[\Pi_{k-1}\nabla u_h\cdot\nn_e])_e.
\end{align}
The identities $(\Pi_{\ell-1}\nabla p_h, \Pi_{\ell-1}\nabla e_I^p)_\Omega= (\Pi_{\ell-1}\nabla p_h, \nabla J_4 e_I^p)_\Omega$ and $(\Pi_\ell p_h,\Pi_\ell e_I^p)_\Omega=(\Pi_\ell p_h,J_4 e_I^p)_\Omega$ follow from Theorem~\ref{4.4.b}-\ref{4.4.d}. This in the first step and  an integration by parts in the next step lead to
\begin{align*}
	T_4&=\beta((\Pi_\ell p_h-J_4p_h,e^p)_\Omega-(\Pi_\ell p_h,q)_\Omega)+\gamma((\Pi_{\ell-1}\nabla p_h-\nabla J_4 p_h,\nabla e^p)_\Omega\nonumber\\&\quad-(\Pi_{\ell-1}\nabla p_h,\nabla q)_\Omega)+S_{2,0}((1-\Pi_\ell)p_h,(1-\Pi_\ell)e_I^p)\nonumber\\&\quad+S_{\nabla}((1-\pg_\ell)p_h,(1-\pg_\ell)e_I^p)\nonumber\\&=\sum_{K\in\cT_h}\Big(\beta(\Pi_\ell p_h-J_4p_h,e^p)_K+\gamma(\Pi_{\ell-1}\nabla p_h-\nabla J_4 p_h,\nabla e^p)_K\nonumber\\&\quad+(\gamma\nabla\cdot\Pi_{\ell-1}\nabla p_h-\beta \Pi_\ell p_h,q)_K+S_{2,0}^K((1-\Pi_\ell)p_h,(1-\Pi_\ell)e_I^p)\nonumber\\&\quad+S_{\nabla}^K((1-\pg_\ell)p_h,(1-\pg_\ell)e_I^p)\Big)-\sum_{e\in\e^i\cup\e^c}\gamma([\Pi_{\ell-1}\nabla p_h\cdot\nn_e],q)_e.
\end{align*}
The rearrangement of the terms results in 
\begin{align}
	\|\vec{\be}\|^2_{\bH_\epsilon}&\lesssim  T_5+\dots+T_{10},\label{eq:6.9}
\end{align}
where
\begin{align*}
	T_5&:=(\tilde{f}-\Pi_k\tilde{f},J_2^*e_I^u-\pd_ke_I^u)_\Omega+(\tilde{f}-\Pi_kv_h-\Delta^2\pd_k u_h-\alpha \nabla\cdot\Pi_{\ell-1}\nabla p_h,v)_\Omega,\\T_6&:=(\tilde{g}-\Pi_\ell\tilde{g},J_4e_I^p-\pg_\ell e_I^p)_\Omega+(\tilde{g}+\gamma\nabla\cdot\Pi_{\ell-1}\nabla p_h-\beta \Pi_\ell p_h+\alpha \nabla\cdot \Pi_{k-1}\nabla u_h,q)_\Omega,\\T_7&:=(\Pi_ku_h-J_2^*u_h,e^u)_\Omega+(\nabla^2_\pw(\pd_ku_h-J_2^*u_h),\nabla^2e^u)_\Omega\\&\quad+\alpha(\nabla J_4p_h-\Pi_{\ell-1}\nabla p_h,\nabla e^u)_\Omega+\alpha(\Pi_{\ell-1}\nabla e_I^p-\nabla J_4e_I^p,\Pi_{k-1}\nabla u_h-\Pi_{\ell-1}\nabla u_h)_K\\&\quad+\alpha(\nabla e^p,\Pi_{k-1}\nabla u_h-\nabla J_2^*u_h)_\Omega+\beta(\Pi_\ell p_h-J_4p_h,e^p)_\Omega\\&\quad+\gamma(\Pi_{\ell-1}\nabla p_h-\nabla J_4 p_h,\nabla e^p)_\Omega,\\T_8&:=S_{1,0}((1-\Pi_k)u_h,(1-\Pi_k)e_I^u)+S_{\nabla^2}((1-\pd_k)u_h,(1-\pd_k)e_I^u)\nonumber\\&\quad+S_{2,0}((1-\Pi_\ell)p_h,(1-\Pi_\ell)e_I^p)+S_{\nabla}((1-\pg_\ell)p_h,(1-\pg_\ell)e_I^p),\\T_9&:=-\sum_{e\in\e^i\cup\e^c}(q,[\alpha\Pi_{k-1}\nabla u_h\cdot\nn_e+\gamma\Pi_{\ell-1}\nabla p_h\cdot\nn_e])_e,\\T_{10}&:=-\sum_{e\in\e^i\cup\e^s}([\partial_{\nn\nn}(\pd_ku_h)],\partial_{\nn}v)_e-\sum_{e\in\e^i}( [T(\pd_ku_h)+\alpha \Pi_{\ell-1}\nabla p_h\cdot\nn_e],v)_e.
\end{align*}
The Poincar\'e--Friedrichs inequality implies that $h_P^{-2}\|J_2^*e_I^u-\pd_ke_I^u\|_{L^2(K)}\lesssim |J_2^*e_I^u-\pd_ke_I^u|_{2,K}$ and $h_P^{-2}\|v\|_{L^2(K)}\lesssim |v|_{2,K}$. Then the triangle inequality $|J_2^*e_I^u-\pd_ke_I^u|_{2,K}$\\$\leq |J_2^*e_I^u-e_I^u|_{2,K}+|e_I^u-\pd_ke_I^u|_{2,K}$ and $|v|_{2,K}\leq |e^u-e_I^u|_{2,K}+|e_I^u-J_2^*e_I^u|_{2,K}$ followed by propositions~\ref{prop:inter_v} and \ref{prop:poly}, and Theorem~\ref{6.1.e} show
\begin{align}
	T_5\lesssim \Big(\sum_{K\in\cT_h}\eta_{1,K}\Big) |e^u|_{2,\Omega}.
\end{align}
Similarly we can prove that $T_6\lesssim \Big(\sum_{K\in\cT_h}\eta_{2,K}\Big) |e^p|_{1,\Omega}\leq \Big(\sum_{K\in\cT_h}\eta_{2,K}\Big) \gamma^{1/2}|e^p|_{1,\Omega}$. Then we proceed to rewrite the terms in $T_7$ using the $L^2$-orthogonality $\Pi_k,\Pi_\ell$ and Theorem~\ref{6.1.d}-\ref{4.4.c} as 
\begin{align*}
	(\Pi_ku_h-J_2^*u_h,e^u)_\Omega&=(\Pi_ku_h-J_2^*u_h,e^u-\Pi_ke^u)_\Omega=(\pd_ku_h-J_2^*u_h,e^u-\Pi_ke^u)_\Omega, \\
	(\Pi_\ell p_h-J_4p_h,e^p)_\Omega &= (\Pi_\ell p_h-J_4p_h,e^p-\Pi_\ell e^p)_\Omega=(\pg_\ell p_h-J_4p_h,e^p-\Pi_\ell e^p)_\Omega.
\end{align*}
Then we combine Cauchy--Schwarz  and triangle inequalities, which  results in
\begin{align}
	T_7&\lesssim (\|u_h-\pd_ku_h\|_{\Omega}+|u_h-\pd_ku_h|_{2,h}+\|u_h-J_2^*u_h\|_{\Omega}+|u_h-J_2^*u_h|_{2,h}\nonumber\\&\quad+|u_h-\pg_\ell u_h|_{1,h}+\|p_h-\pg_\ell p_h\|_{\Omega}+|p_h-\pg_\ell p_h|_{1,h}+\|p_h-J_4p_h\|_{\Omega}\nonumber\\&\quad+|p_h-J_4p_h|_{1,h})\|\vec{\be}\|_{\bH_\epsilon}\nonumber\\&\lesssim \sum_{K\in\cT_h}\Big( (1+\alpha^{1/2}h_K+h_K^2)|u_h-\pd_ku_h|_{2,K}+\alpha^{1/2}|u_h-\pg_\ell u_h|_{1,h}\nonumber\\&\qquad+\beta^{1/2}\|p_h-\pg_\ell p_h\|_{K}+\gamma^{1/2}|p_h-\pg_\ell p_h|_{1,K}+\eta_{8,K}\Big)\|\vec{\be}\|_{\bH_\epsilon}\nonumber\\&\lesssim \sum_{K\in\cT_h}(\eta_{6,K}+\eta_{7,K}+\eta_{8,K})\|\vec{\be}\|_{\bH_\epsilon}.\nonumber
\end{align}
The second step follows from the Poincar\'e--Friedrichs inequality and Theorem~\ref{thm:J1}-\ref{thm:J4}, and the last step from \eqref{s1}-\eqref{s3} and the equivalence $|u_h-\pg_\ell u_h|_{1,K}\approx \|\mathrm{Dof}^{k,\mathrm{nc}}(u_h-\pg_\ell u_h)\|_{\ell^2}$ (see \cite{huang21} for a proof). Cauchy--Schwarz inequalities for  inner products and \eqref{s2} lead to
\begin{align*}
		T_8&\lesssim \Big(\sum_{K\in\cT_h}\|u_h-\Pi_ku_h\|_{L^2(K)}+S_{\nabla^2}^{1/2}((1-\pd_k)u_h,(1-\pd_k)u_h)\\&\quad+S_{2,0}^{1/2}((1-\Pi_\ell)p_h,(1-\Pi_\ell)p_h)+S_{\nabla}^{1/2}((1-\pg_\ell)p_h,(1-\pg_\ell)p_h)\Big)\|\vec{\be}\|_{\bH_\epsilon}\\&\lesssim \Big(\sum_{K\in\cT_h}\eta_{6,K}\Big)\|\vec{\be}\|_{\bH_\epsilon},
\end{align*}
with $\|u_h-\Pi_ku_h\|_{K}\leq \|u_h-\pd_ku_h\|_{K}\lesssim h_K^2|u_h-\pd_ku_h|_{2,K}$ 
followed by \eqref{s1} in the last estimate. The trace inequality shows $\|q\|_{e}\lesssim h_e^{-1/2}\|q\|_{\omega_e}+h_e^{1/2}|q|_{1,\omega_e}\lesssim h_e^{1/2}|q|_{1,\omega_e}$ with the Poincar\'e--Friedrichs inequality in the last bound. This and the Cauchy--Schwarz inequality prove that 
\begin{align*}
	T_9\lesssim \Big(\sum_{K\in\cT_h}\eta_{5,K}\Big)\gamma^{1/2}|e^p|_{1,\Omega}.
\end{align*}
Finally, analogous arguments as those used in $T_9$ show that $T_{10}\lesssim \Big(\sum_{K\in\cT_h}(\eta_{3,K}+\eta_{4,K})\Big)|e^u|_{2,\Omega}$. The previous bounds in \eqref{eq:6.9} conclude the proof. 
\end{proof}
\subsection{Efficiency}\label{subsec:eff}
\begin{theorem}[Efficiency up to stabilisation and data oscillation] Under the assumption $\ell\leq k\leq \ell+2$, the local error estimators are bounded above as follows:
\begin{subequations}	\begin{align}
		\eta_{1,K}&\lesssim \|u-u_h\|_K+|u-u_h|_{2,K}+\gamma^{1/2}|p-p_h|_{1,K}+\eta_{6,K}+\mathrm{osc}_2(\tilde{f},K),\label{eff1}\\
		\eta_{2,K}&\lesssim |u-u_h|_{2,K}+\beta\|p-p_h\|_K+\gamma|p-p_h|_{1,K}+(\beta^{1/2}+\gamma^{1/2})\eta_{6,K}+\mathrm{osc}_1(\tilde{g},K),\label{eff2}\\
		\eta_{3,K} &\lesssim \sum_{e\in\e_K}\sum_{K'\in\omega_e}\Big(\|u-u_h\|_{K'}+|u-u_h|_{2,K'}+|p-p_h|_{1,K'}+\eta_{6,K'}+\mathrm{osc}_2(\tilde{f},K')\Big),\label{eff3}\\
		\eta_{4,K} &\lesssim \sum_{e\in\e_K}\sum_{K'\in\omega_e}\Big(\|u-u_h\|_{K'}+|u-u_h|_{2,K'}+|p-p_h|_{1,K'}+\eta_{6,K'}+\mathrm{osc}_2(\tilde{f},K')\Big),\label{eff4}\\
		\eta_{5,K}&\lesssim \sum_{e\in\e_k}\sum_{K'\in\omega_e}\Big(|u-u_h|_{1,K'}+\beta\|p-p_h\|_{K'}+\gamma|p-p_h|_{1,K'}+(\beta^{1/2}+\gamma^{1/2})\eta_{6,K'}\nonumber\\&\qquad\qquad+\mathrm{osc}_1(\tilde{g},K')\Big),\label{eff5}\\
		\eta_{7,K}&\lesssim  |u-u_h|_{1,K}+|u-\pg_\ell u|_{1,K},\label{eff6}\\
		\eta_{8,K}&\lesssim |u-u_h|_{2,K}+|p-p_h|_{1,K}+\eta_{6,K}.\label{eff7}
\end{align}\end{subequations}
\end{theorem}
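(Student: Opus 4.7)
The plan is to follow the bubble-function technique of Verf\"urth adapted to the VEM setting (as in \cite{carstensen22,carstensen22b}), treating volume residuals, edge residuals, and the auxiliary terms $\eta_{7,K}$, $\eta_{8,K}$ separately. The recurring idea is: each estimator component is (a) a polynomial residual that can be tested against a bubble multiple of itself and compared to the true residual via the strong form of the PDE, or (b) a jump/DoF contribution that, after invoking continuity of the exact solution, can be bounded by trace/norm-equivalence arguments. Throughout, the discrepancies between $u_h$ (respectively $p_h$) and their polynomial projections $\pd_k u_h$, $\pg_\ell p_h$, $\Pi_k u_h$, $\Pi_\ell p_h$ are absorbed into the stabilisation indicator $\eta_{6,K}$ via \eqref{s1}--\eqref{s4}.

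\emph{Step 1 (volume residuals \eqref{eff1}--\eqref{eff2}).} Write the interior residual polynomials $R_{1,K}:=\tilde f-\Delta^2\pd_ku_h-\Pi_ku_h-\alpha\vdiv\Pi_{\ell-1}\nabla p_h$ and $R_{2,K}:=\tilde g+\gamma\vdiv\Pi_{\ell-1}\nabla p_h-\beta\Pi_\ell p_h+\alpha\vdiv\Pi_{k-1}\nabla u_h$. For $R_{1,K}$ test against $b_K\Pi_kR_{1,K}$, integrate by parts (the element bubble kills boundary terms), and use the strong momentum equation $u+\Delta^2u+\alpha\Delta p=\tilde f$ to replace $\tilde f$. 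This converts the integral into pairings of $u-\pd_ku_h$, $u-\Pi_ku_h$, and $\nabla(p-\Pi_{\ell-1}\nabla p_h)\cdot\!/\!\cdot$; the inverse estimates \eqref{bubble-inverse} and $L^2$-stability of $\Pi_k$ and $\Pi_{\ell-1}$ give the desired bound with an additional oscillation contribution $\mathrm{osc}_2(\tilde f,K)$. Triangle inequalities against $u_h$, combined with $\|u_h-\pd_ku_h\|_K\lesssim h_K^2 S_{\nabla^2}^K((1-\pd_k)u_h,\cdot)^{1/2}$ and the analogous bounds for $\Pi_k u_h$, introduce $\eta_{6,K}$. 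The same template, using $\beta p-\alpha\Delta u-\gamma\Delta p=\tilde g$, yields \eqref{eff2}.

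\emph{Step 2 (edge residuals \eqref{eff3}--\eqref{eff5}).} For each $e\in\mathcal{E}^i_K\cup\mathcal{E}^s_K$, extend the polynomial jump $[\partial_{\nn\nn}(\pd_ku_h)]$ (respectively $[T(\pd_ku_h)+\alpha\Pi_{\ell-1}\nabla p_h\cdot\nn]$, $[\alpha\Pi_{k-1}\nabla u_h\cdot\nn+\gamma\Pi_{\ell-1}\nabla p_h\cdot\nn]$) to the patch $\omega_e$ and test against a suitable edge bubble multiple supported in $\omega_e$. An integration by parts transfers the jump into a volume contribution on $\omega_e$, where the residual equations $\Delta^2 u=\tilde f-u-\alpha\Delta p$ and $\gamma\Delta p=\beta p-\alpha\Delta u-\tilde g$ let us substitute the exact solution. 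The volume terms are then controlled by the already-proven interior bounds \eqref{eff1}--\eqref{eff2}, while the remaining terms involving $u-u_h,p-p_h$ arise naturally through trace and inverse inequalities, yielding bounds on $\omega_e$. Summing the contributions of the edges of $K$ produces \eqref{eff3}--\eqref{eff5}.

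\emph{Step 3 (auxiliary terms \eqref{eff6}--\eqref{eff7}).} For $\eta_{7,K}$, Lemma~\ref{lem:norm-equiv} (or the analogous equivalence in \cite{huang21}) gives $\alpha^{1/2}\|\mathrm{Dof}^{k,\mathrm{nc}}(u_h-\pg_\ell u_h)\|_{\ell^2}\approx\alpha^{1/2}h_K^{-1}\|u_h-\pg_\ell u_h\|_K\lesssim\alpha^{1/2}|u_h-\pg_\ell u_h|_{1,K}$, and a triangle inequality against $u$ closes the estimate. For $\eta_{8,K}$ we exploit that $\nabla u$ and $p$ have vanishing jumps across interior edges, so $[\nabla\pd_ku_h]=[\nabla(\pd_ku_h-u)]$ and $[\Pi_\ell p_h]=[\Pi_\ell p_h-p]$; a standard trace inequality on the edge patch, together with Poincar\'e--Friedrichs and \eqref{PF}, yields the bound with contributions from $|u-u_h|_{2,K}$, $|p-p_h|_{1,K}$ and $\eta_{6,K}$.

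\emph{Main obstacle.} The delicate point is the bookkeeping of the parameters $\alpha,\beta,\gamma$: to preserve robustness one must place the weights \emph{inside} the bubble-function arguments exactly as they appear in the weighted energy norm $\|\cdot\|_{\bH_\epsilon^h}$, so that when the strong PDE is invoked the resulting cross terms pair with the correct weighted component of the error. This requires choosing the bubble test functions with the appropriate scaling (in particular, $\gamma^{1/2}$-scaled bubbles for the pressure edge residuals and $\beta^{1/2}$-scaled ones in $\eta_{2,K}$) and carefully keeping the powers of $h_K$ aligned so that the oscillations $\mathrm{osc}_2(\tilde f,K),\mathrm{osc}_1(\tilde g,K)$ absorb the projection errors on the data. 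The hypothesis $\ell\leq k\leq\ell+2$ is what guarantees that the coupling terms $\alpha\vdiv\Pi_{\ell-1}\nabla p_h$ and $\alpha\vdiv\Pi_{k-1}\nabla u_h$ remain polynomials of degree compatible with the bubble inverse estimates, avoiding extra stability penalties beyond those already listed in $\eta_{6,K}$.
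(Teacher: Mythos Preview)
Your proposal is correct and follows essentially the same Verf\"urth bubble-function strategy as the paper's proof: element bubbles in $H^2_0(K)$ (respectively $H^1_0(K)$) for the volume residuals $\eta_{1,K},\eta_{2,K}$, the edge bubbles $\phi_e,\psi_e$ from \cite{chen2022} and $b_e$ from \cite{cangiani17} for $\eta_{3,K},\eta_{4,K},\eta_{5,K}$, and the trace/norm-equivalence arguments for $\eta_{7,K},\eta_{8,K}$. Your identification of the role of $\ell\le k\le\ell+2$ (ensuring $\vdiv\Pi_{k-1}\nabla u_h\in\p_{\ell}(K)$ so that the bubble inverse estimate \eqref{bubble-inverse-1} applies to the full residual $q_\ell$) is exactly where the paper uses this hypothesis.

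One minor deviation: for $\eta_{7,K}$ the paper invokes directly the $H^1$-seminorm equivalence $|u_h-\pg_\ell u_h|_{1,K}\approx\|\mathrm{Dof}^{k,\mathrm{nc}}(u_h-\pg_\ell u_h)\|_{\ell^2}$ from \cite{huang21}, whereas you route through the $L^2$-norm equivalence of Lemma~\ref{lem:norm-equiv} followed by a Poincar\'e--Friedrichs step; the latter is valid because the nonconforming projector satisfies $\int_{\partial K}(u_h-\pg_\ell u_h)\,\mathrm{d}s=0$, so both paths work.
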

\begin{proof}
Recall the element bubble-function $b_{2,K}:=b_K\in H^2_0(K)$ supported in $K\in\cT_h$ and $\ell\leq k$. Let \[v_k:=\Pi_k\tilde{f}-\Delta^2\pd_ku_h-\Pi_ku_h-\alpha\nabla\cdot\Pi_{\ell-1}\nabla p_h\in\p_k(K)\;\text{and}\; v:=v_kb_{2,K}\in H^2_0(\Omega)\subset V.\]
This in the first equation of the continuous problem \eqref{eq:weak:a}, and $a^K(\pd_ku_h,v) = (\Delta^2\pd_ku_h,v)_K$ and $(\Pi_{\ell-1}\nabla p_h,\nabla v)_K = -(\nabla \cdot \Pi_{\ell-1}\nabla p_h,v)_K$ from an integration by parts lead to 
\begin{align*}
	&(u-\Pi_ku_h,v)_K+a^K(u-\pd_ku_h,v)-\alpha(\nabla p- \Pi_{\ell-1}\nabla p_h,\nabla v)_K \\&\quad= (\tilde{f}-\Pi_k\tilde{f},v)_K + (v_k,v)_K.
\end{align*}
Hence $v=v_kb_{2,K}$, the inequalities \eqref{bubble-inverse}, and $\alpha\leq 1\leq \gamma$ show that
\begin{align*}
	h_K^2\|v_k\|_K&\lesssim h_K^2\|\tilde{f}-\Pi_k\tilde{f}\|_K+h_K^2\|u-\Pi_ku_h\|_K+|u-\pd_ku_h|_{2,K}\\&\quad+  \gamma^{1/2}h_K\|\nabla p - \Pi_{\ell-1}\nabla p_h\|_K\\&\lesssim \|u-u_h\|_K+ |u-u_h|_{2,K}+\gamma^{1/2}|p-p_h|_{1,K}+\eta_{6,K}+\mathrm{osc}_2(\tilde{f},K)
\end{align*}
with triangle inequalities and \eqref{s2}-\eqref{s4} in the last estimate. This and the triangle inequality $\eta_{1,K}\leq \mathrm{osc}_2(\tilde{f},K)+h_K^2\|v_k\|_K$ conclude the proof of \eqref{eff1}. The  bubble-function $b_{1,K}\in H^1_0(K)$ supported in $K$ is constructed as in \cite{carstensen22}   and it satisfies, for any $\chi\in \p_{\ell}(K)$, that
\begin{align}
	\|\chi\|_{K}\lesssim&\sum_{m=0}^1 h_K^m|b_{1,K}\chi|_{m,K}\lesssim \|\chi\|_{K}.\label{bubble-inverse-1}
\end{align}
Let
$q_\ell:=\Pi_\ell\tilde{g}+\gamma\nabla\cdot\Pi_{\ell-1}\nabla p_h-\beta\Pi_\ell p_h+\alpha\nabla\cdot\Pi_{k-1}\nabla u_h$ and $q:=q_\ell b_{1,K}.$
This in the second equation of the continuous problem \eqref{eq:weak:b}, and $(\nabla q,\Pi_{k-1}\nabla u_h)_K = -(q,\nabla\cdot \Pi_{k-1}\nabla u_h)_K$ and $(\Pi_{\ell-1}\nabla p_h,\nabla q)_K= -(\nabla\cdot \Pi_{\ell-1}\nabla p_h, q)_K$ show
\begin{align*}
	&\beta(p-\Pi_\ell p_h,q)_K+\alpha( q,\nabla u - \Pi_{k-1}\nabla u_h)_K+\gamma(\nabla p-\Pi_{\ell-1}\nabla p_h,\nabla q)_K \\&= (\tilde{g}-\Pi_{\ell}\tilde{g},q)_K+(q_\ell,q)_K.
\end{align*}
Hence \eqref{bubble-inverse-1} in the above equation allows us to assert that 
\begin{align*}
	&h_K\|q_\ell\|_K\lesssim |u-u_h|_{2,K}+\beta\|p-p_h\|_K+\gamma |p-p_h|_{1,K}+|u_h-\pd_k u_h|_{2,K}\\&\qquad+\gamma|p_h-\pg_\ell p_h|_{1,K}+\mathrm{osc}_1(\tilde{g},K)\\&\quad\lesssim |u-u_h|_{2,K}+\beta\|p-p_h\|_K+\gamma |p-p_h|_{1,K}+(\beta^{1/2}+\gamma^{1/2})\eta_{6,K}+\mathrm{osc}_1(\tilde{g},K).
\end{align*}
This concludes the proof of \eqref{eff2}. It follows from \cite{chen2022} that  $v:=\phi_e[\partial_{\nn\nn}(\pd_ku_h)]$ satisfies  the first inequality
\begin{align}
	\|[\partial_{\nn\nn}(\pd_ku_h)]\|_e^2&\lesssim ([\partial_{\nn\nn}(\pd_ku_h)],\partial_{\nn}v)_e=a^{\omega_e}(\pd_ku_h,v)-(\Delta^2\pd_ku_h,v)_{\omega_e}\nonumber\\&\hspace{-0.5cm}=a^{\omega_e}(\pd_ku_h-u,v)+(\tilde{f}-\Pi_k\tilde{f},v)_{\omega_e}+(v_k,v)_{\omega_e}+(\Pi_ku_h-u,v)_{\omega_e}\nonumber\\&\quad+\alpha(\nabla p-\Pi_{\ell-1}\nabla p_h,\nabla v)_{\omega_e}\label{6.15}
\end{align}
with the second equality from an integration by parts and the last equality from \eqref{eq:weak:a}. 
The Cauchy--Schwarz inequality in \eqref{6.15} and the inverse estimate  result in
\begin{align*}
	\|[\partial_{\nn\nn}(\pd_ku_h)]\|_e^2&\lesssim \sum_{K'\in\omega_e}\Big(h_{K'}^{-2}(|u-\pd_ku_h|_{2,K'}+\eta_{1,K})+\|u-\Pi_ku_h\|_{K'}\\&\quad+\alpha h_{K'}^{-1/2}\|\nabla p -\Pi_{\ell-1}\nabla p_h\|_{K'}\Big)\|v\|_{K'}.
\end{align*}
Refer to \cite{chen2022} for the estimate $\|v\|_{\omega_e}\lesssim h_e^{3/2}\|[\partial_{\nn\nn}(\pd_ku_h)]\|_e$. This and
\eqref{eff1}    conclude the proof of \eqref{eff3}. Since $[T(\pd_k u_h)+\alpha\Pi_{\ell-1}\nabla p_h\cdot\nn]$ is a polynomial along an edge $e$, the analogous arguments as in the bound of $\eta_{3,K}$ for $w:=\psi_e[T(\pd_k u_h)+\alpha\Pi_{\ell-1}\nabla p_h\cdot\nn]$ lead to 
\begin{align}
&([T(\pd_k u_h)+\alpha\Pi_{\ell-1}\nabla p_h\cdot\nn],w)_e=(u-\Pi_ku_h,w)_{\omega_e}-\alpha(\nabla p-\Pi_{\ell-1}\nabla p_h,\nabla w)_{\omega_e}\nonumber\\&-(\tilde{f}-\Pi_ku_h-\Delta^2\pd_ku_h-\alpha\nabla \cdot\Pi_{\ell-1}\nabla p_h,w)_{\omega_e}-a^{\omega_e}(\pd_ku_h-u,w)\nonumber\\&+([\partial_{\nn\nn}(\pd_ku_h)],\partial_{\nn}w)_e.\nonumber
\end{align}
The Cauchy--Schwarz inequality, the inverse  estimates $\sum_{m=0}^2h_K^{m-2}|w|_{m,K}\lesssim \|w\|_K\lesssim h_e^{-3/2}\|[T(\pd_k u_h)+\alpha\Pi_{\ell-1}\nabla p_h\cdot\nn]\|_e$, and \eqref{eff1}-\eqref{eff3} conclude the proof of \eqref{eff4}. Let $b_e\in H^1_0(\omega_e)$ be the edge-bubble function constructed as in \cite[Lemma~9]{cangiani17} with the estimates 
\begin{align}
	\|\chi\|^2_e\lesssim (b_e,\chi^2)_e\lesssim \|\chi\|^2_e\quad\text{and}\quad\sum_{m=0}^1h_K^{m-1/2}\|b_e\chi\|_{m,K}\lesssim \|\chi\|_e\label{6.17}
\end{align}
for $\chi\in\p_\ell(e)$ with the constant elongation of $\chi$ in the normal direction of $e\in\e_K$. The test function $q=b_e[\alpha \nabla\cdot\Pi_{k-1}\nabla u_h+\gamma\nabla\cdot\Pi_{\ell-1}\nabla p_h]$ in \eqref{eq:weak:b} and an integration by parts show
\begin{align*}
&\beta(p-\Pi_\ell p_h,q)_{\omega_e}+\alpha(\nabla q,\nabla u-\Pi_{k-1}\nabla u_h)_{\omega_e}+\gamma (\nabla p-\Pi_{\ell-1}\nabla p_h,\nabla q)_{\omega_e}\\&=(\tilde{g}-\Pi_\ell\tilde{g},q)_{\omega_e}+(\Pi_\ell\tilde{g}-\Pi_\ell p_h+\alpha \nabla\cdot\Pi_{k-1}\nabla u_h+\gamma\nabla\cdot\Pi_{\ell-1}\nabla p_h,q)_{\omega_e}\\&\quad-([\alpha \nabla\cdot\Pi_{k-1}\nabla u_h+\gamma\nabla\cdot\Pi_{\ell-1}\nabla p_h],q)_e.
\end{align*}
The Cauchy--Schwarz inequality, $\chi=[\alpha \nabla\cdot\Pi_{k-1}\nabla u_h+\gamma\nabla\cdot\Pi_{\ell-1}\nabla p_h]$ in \eqref{6.17}, and the estimate \eqref{eff2} for $\eta_{2,K}$ conclude the proof of \eqref{eff5}. Then we can invoke  the equivalence $|u_h-\pg_\ell u_h|_{1,K}\approx \|\text{Dof}^k_K(u_h-\pg_\ell u_h)\|_{\ell^2}$ again as in the reliability, and then the definition of $\pd_\ell$ and the triangle inequality  allow us to prove that 
\begin{align*}
	\eta_{7,K}\lesssim |u_h-\pg_\ell u|_{1,K}\leq |u-u_h|_{1,K}+|u-\pg_\ell u|_{1,K}.
\end{align*}
The estimate \eqref{eff7} immediately follows from the arguments involved in  the proof of Theorem~\ref{4.1.c}.
\end{proof}
\begin{remark}[Higher degrees $k\geq 3$ and $\ell\leq k\leq \ell+2$]{\label{rem:nc-k3}}
If we introduce $J\vec{\bu_h}=(J_2u_h,J_4p_h)$ in the   proof of Theorem~\ref{thm:reliability} for $k\geq 3$, then the proof follows analogously  with only difference in the estimate \eqref{6.7} of the term $T_2$. There the arguments utilise the $H^1$-orthogonality $\nabla_\pw (v_h-J_2^*v_h)\perp (\p_{\ell-1}(\cT_h))^2$ and hence the same estimator works if one can construct $J_2^*$ for $k\geq 3$ with this orthogonality (which is possible but not  trivial). Still for higher $k$, we can invoke the $H^1$-orthogonality Theorem~\ref{4.2.b} and this leads to 
\begin{align*}
	&\alpha(\Pi_{\ell-1}\nabla p_h,\nabla J_2e_I^u-\Pi_{k-1}\nabla e_I^u)_K=\alpha(\Pi_{\ell-1}\nabla p_h-\Pi_{k-3}\nabla p_h,\nabla J_2e_I^u-\Pi_{k-1}\nabla e_I^u)_K\\&\qquad\lesssim \alpha (|p_h-\pg_\ell p_h|_{1,K}+|p_h-\pg_{k-2}p_h|_{1,K})h_K|e_I^u|_{1,K}
\end{align*}
with the Cauchy--Schwarz inequality,  the triangle inequality,  Theorem~\ref{4.2.d}, and Proposition~\ref{prop:poly} in the last estimate.  Consequently, we assume that $k-2\leq 
\ell$ and obtain an additional contribution, say $\eta_{9,K}$, in the error estimator. The equivalence of norms show  \[\eta_{9,K}^2:= \alpha h_K^2\|\text{Dof}^{\,k}_K(p_h-\pg_{k-2}p_h)\|_{\ell^2}^2,\]
and also the efficiency 
\[\eta_{9,K}\lesssim h_K|p_h-\pg_{k-2}p_h|_{1,K}\leq h_K|p_h-\pg_{k-2}p|_{1,K}\leq |p-p_h|_{1,K}+h_K|p-\pg_{k-2}p|_{1,K}. \]
\end{remark}
\begin{remark}[Conforming VEM]
As companion operators are not required in the conforming case, the proof of reliability and efficiency follows analogously assuming $J= I$, where $I$ denotes the identity operator. Note that the local contributions  $\eta_{8,K}$ and $\eta_{9,K}$ arise due to the noncoformity of the method and hence the error estimator in the conforming case is 
\[\|u-u_h^c\|_{\bH_\epsilon}^2\lesssim \sum_{i=1}^7\eta_{i}^2.\]
\end{remark}
\begin{remark}[Choices of projection operators]
Note that the projection $\Pi_{k-2}\nabla v_h$ for $v_h\in\vhc$ (or $\vhnc$) is also computable in terms of the DoFs, and both a priori and a posteriori error analysis hold with this choice. We prefer to use $\Pi_{k-2}\nabla v_h$ instead of $\Pi_{k-1}\nabla v_h$ in the numerical experiments below. Also from the theoretical analysis,  observe that if we set $\ell=k$ (one degree higher for pressure) and modify the term $(\nabla p,\nabla u)_K\approx (\Pi_{k-1}\nabla p,\Pi_{k-1}\nabla u)_K$ for all $K\in\cT_h$ in the discrete approximation, then the error estimator component $\eta_7$ will  disappear. But higher approximation of pressure may not be a good choice from a numerical perspective.
\end{remark}
\section{Numerical results}\label{sec:results}
We now present a number of computational tests that confirm the theoretical a priori and a posteriori error estimates from Sections~\ref{sec:error-c}-\ref{sec:error-nc} and  Section~\ref{sec:apost}, and we also include typical benchmark solutions for Kirchhoff--Love plates that we modify to include the coupling with filtration in porous media. All meshes were generated with the library \texttt{PolyMesher} \cite{talischi12}. 
\subsection{Example 1: Accuracy verification with smooth solutions}
In order to investigate numerically the error decay predicted by Theorems~\ref{theo:cv-c} and \ref{theo:cv-nc}, we follow the approach of manufactured solutions. We set the parameters $\alpha=\beta=\gamma =1$ in all the examples below. 
\par We construct a transverse load and a source function  $f,g$, respectively, as well as homogeneous and non-homogeneous boundary data for $u$ and $p$, such that the problem has the following smooth deflection and fluid pressure moment exact solutions 
\[ u(x,y) = \sin^2(\pi x)\sin^2(\pi y), \quad p(x,y) = \cos(\pi xy),\]
on the square domain $\Omega=(0,1)^2$ with mixed boundary conditions $\Gamma_c:= \{x=0\} \cup \{y=0\} $ and $\Gamma_s:=\partial\Omega\setminus\Gamma_c$. Then we employ a sequence of successively refined meshes $\cT_h^i$ and compute the projected virtual element solution $(\pd_ku_h,\pg_\ell p_h)$ on each mesh refinement $h_i$, and monitor the norms  $|u-\pd_ku_h|_{2,h}$ for displacement approximation,  $|p-\pg_\ell p_h|_{1,h}$ for pressure approximation and the combined energy norm $\|\cdot\|_{\bH_\epsilon^h}$. The experimental order of convergence $\texttt{r}_i$ is computed from the formula 
\[ \texttt{r}_i= \frac{\log (\frac{\texttt{e}_{i+1}}{\texttt{e}_i})}{\log (\frac{h_{i+1}}{h_i})},\] 
where $\texttt{e}_{i}$ denotes a norm of the error on the mesh $\cT_h^{i}$.
\par We impose the appropriate boundary conditions for both clamped and simply supported boundaries. In case of the conforming VEM, note that the degrees of freedom include the gradient values at vertices and  $u(z)=0 =  \partial_{\nn\nn}u(z) =0$ implies $\nabla u(z) = \cero$ for a  corner $z$ along the boundary $\Gamma^s$. Hence,  we have to impose the zero gradient values at the corners on simply supported part in addition to the clamped part. 
\begin{figure}[H]
\centering
\begin{subfigure}{.5\textwidth}
	\centering
	\includegraphics[width=1\linewidth]{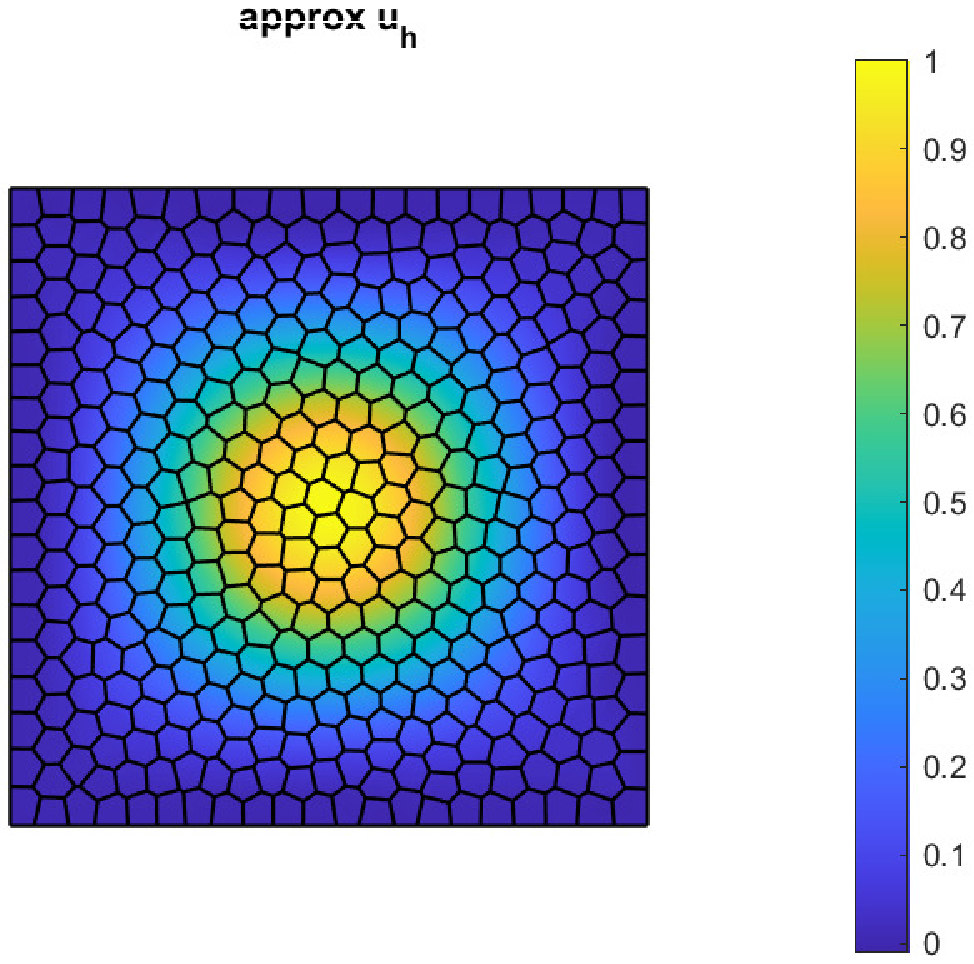}
\end{subfigure}%
\begin{subfigure}{.5\textwidth}
	\centering
	\includegraphics[width=1\linewidth]{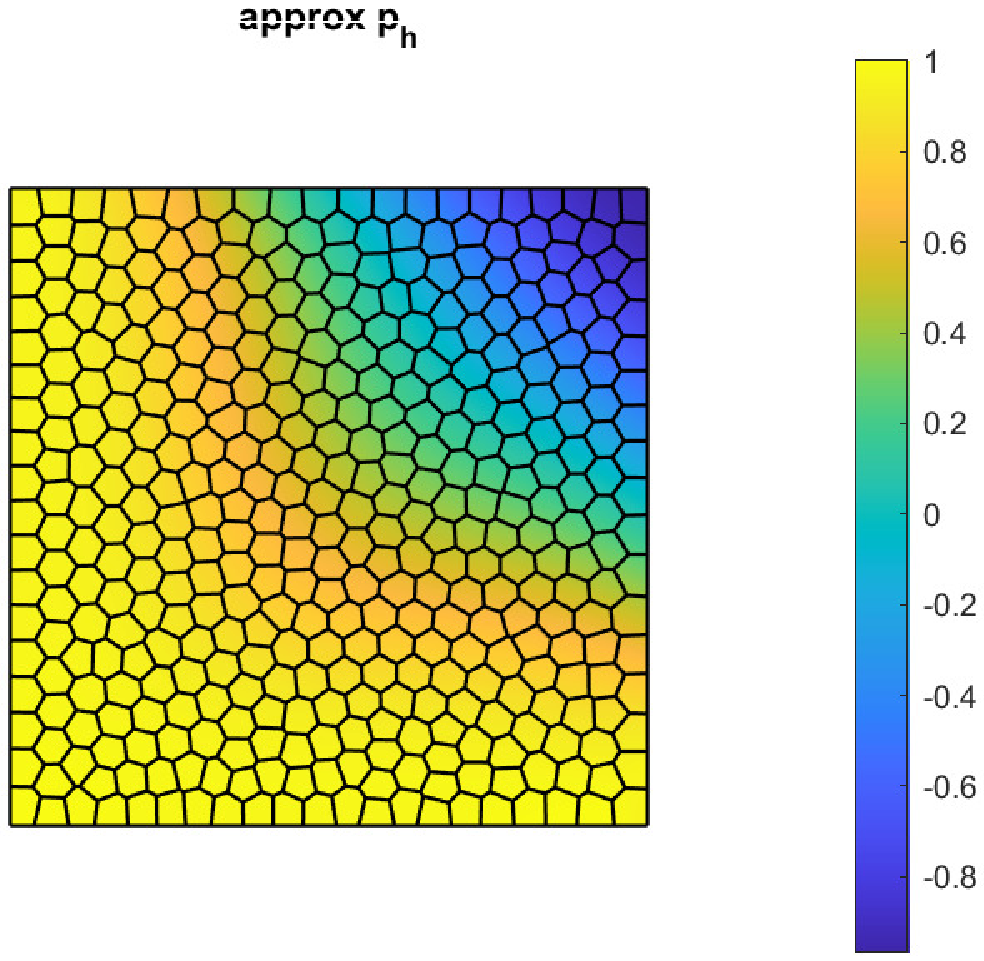}
\end{subfigure}
\caption{Approximation $u_h$  of displacement $u$ for $k=2$ (left) and $p_h$ of pressure $p$ for $\ell=1$ (right)   on a smooth Voronoi mesh of 400 elements.}
\label{fig:Ex1}
\end{figure}
We take $\ell=k-1$ in numerical experiments to obtain the expected  optimal convergence rates for both conforming and nonconforming VEM. See Table~\ref{table:ex1-nc-and-c} (resp. \ref{table:ex1_k3-c-and-nc})   for $k=2$ and $\ell=1$ (resp. for $k=3$ and $\ell=2$). Tables~\ref{table:ex1-nc-and-c}-\ref{table:ex1_k3-c-and-nc} display the errors and the convergence rates on a sequence of Voronoi meshes. 
\begin{table}[H]
\begin{center}
	\begin{tabular}{|c|c|c|c|c|c|c|}
		\hline
		$h$& $|u-\Pi_2^{\nabla^2}u_h|_{2,h}$&$\texttt{r}_i$&$|p-\Pi_1^\nabla p_h|_{1,h}$&$\texttt{r}_i$& $|\vec{\bu}-\vec{\Pi} \vec{\bu}_h|_{\bH_\epsilon^h}$& $\texttt{r}_i$\vphantom{$\displaystyle\int$}\\
		\hline
		\hline
		\multicolumn{7}{|c|}{Conforming VE discretisation}\\
		\hline
		0.6801&13.530&0.575&0.8743&0.106&14.848&0.551\\
		0.3124&8.6492&1.213&0.8051&1.247&9.6714&1.212\\
		0.1558&3.7205&0.988&0.3382&1.450&4.1621&1.036\\
		0.0795&1.9133&0.908&0.1275&1.328&2.0729&0.941\\
		0.0390&1.0023&1.148&0.0495&1.126&1.0608&1.151\\
		0.0193&0.4483&*        &0.0225&*&0.4734&*\\                \hline
		
		\hline
		\multicolumn{7}{|c|}{Nonconforming VE discretisation}\\
		\hline
		0.7147&24.738&0.966&5.1202&0.855&33.475&1.001\\
		0.3625&12.837&1.197&2.8642&1.360&16.960&1.223\\
		0.1862&5.7833&0.943&1.1574&1.798&7.5073&1.092\\
		0.0938&3.0282&1.181&0.3372&1.825&3.5485&1.261\\
		0.0476&1.3600&1.207&0.0978&1.632&1.5092&1.252\\
		0.0244&0.6077&*&0.0329&*&0.6544&*\\
		\hline
	\end{tabular}
\end{center}
\caption{Error $\vec{\bu}-\vec{\Pi}\vec{\bu}_h=(u-\Pi_2^{\nabla^2}u_h,p-\pg_1p_h)$ in the energy norm $\|\bullet\|_{\bH_\epsilon^h}$ 
	with $k=2$ and $\ell=1$ on a sequence of smooth Voronoi meshes of 5,25,100,400,1600, and 6400 elements.}
\label{table:ex1-nc-and-c}
\end{table}
\begin{table}[H]
\begin{center}
	\begin{tabular}{|c|c|c|c|c|c|c|}
		\hline
		$h$& $|u-\Pi_3^{\nabla^2}u_h|_{2,h}$&$\texttt{r}_i$&$|p-\Pi_2^\nabla p_h|_{1,h}$&$\texttt{r}_i$& $|\vec{\bu}-\vec{\Pi} \vec{\bu}_h|_{\bH_\epsilon^h}$& $\texttt{r}_i$\vphantom{$\displaystyle\int$}\\
		\hline
		\hline
		\multicolumn{7}{|c|}{Conforming VE discretisation}\\
		\hline
		0.6801&8.2153&1.986&0.4883&1.712&8.9142&1.984\\
		0.3096&1.7206&2.058&0.1269&2.861&1.8713&2.114\\
		0.1508&0.3916&2.219&0.0162&2.313&0.4091&2.226\\
		0.0794&0.0942&1.979&0.0036&2.069&0.0980&1.984\\
		0.0393&0.0234&2.109&0.0008&2.166&0.0243&2.111\\
		0.0203&0.0058&*&0.0002&*&0.0060&*\\
		\hline
		\multicolumn{7}{|c|}{Nonconforming VE discretisation}\\
		\hline
0.7147&10.748&0.7089&3.6144&-0.3735&15.115&0.3286\\
0.3406&6.3548&1.5061&4.7674&1.9128&11.847&1.7228\\
0.1875&2.5874&2.0609&1.5229&2.5617&4.2385&2.2546\\
0.0969&0.6639&1.9931&0.2808&2.8056&0.9570&2.2010\\
0.0484&0.1663&2.2325&0.0340&2.6055&0.2075&2.3010\\
0.0239&0.0345&2.0891&0.0064&2.2739&0.0410&2.1178\\
0.0123&0.0086&*&0.0014&*&0.0101&*\\
		\hline
	\end{tabular}
\end{center}
\caption{Error $\vec{\bu}-\vec{\Pi}\vec{\bu}_h=(u-\Pi_3^{\nabla^2}u_h,p-\pg_2p_h)$ in the energy norm $\|\bullet\|_{\bH_\epsilon^h}$
	with $k=3$ and $\ell=2$ on a sequence of smooth Voronoi meshes of 5,25,100,400,1600, and 6400 elements.}
\label{table:ex1_k3-c-and-nc}
\end{table}
	Figures~\ref{fig1:ErrEst}-\ref{fig1:ncErrEst} display the error and the error estimator convergence rates for both uniform and adaptive refinements. In this example, we choose a smooth Voronoi mesh of 25 elements as an initial partition and follow the standard adaptive algorithm 
	\[\text{SOLVE}\longrightarrow \text{ESTIMATE} \longrightarrow \text{MARK} \longrightarrow \text{REFINE} \]
	\par In all the adaptive experiments below, we first solve the discrete problem \eqref{eq:operator_discrete} (resp. \eqref{eq:operator_discrete_nc}) for conforming (resp. nonconforming), compute the upper bound $\eta$ in Theorem~\ref{thm:reliability}, consider the D\"{o}rfler marking strategy with $\theta = 0.5$, and  divide a marked polygon into quadrilaterals  by connecting vertices to the centroid of the respective polygon.  The same refinement strategy is utilised to divide all the elements in case of uniform refinement. The additional error estimator component $\eta_9$ from Remark~\ref{rem:nc-k3} is incorporated in the experiment of the nonconforming VEM with degree $k=3$ and $\ell=2$.	
	\begin{figure}[H]
		\centering
		\begin{subfigure}{.5\textwidth}
			\centering
			\includegraphics[width=0.95\linewidth]{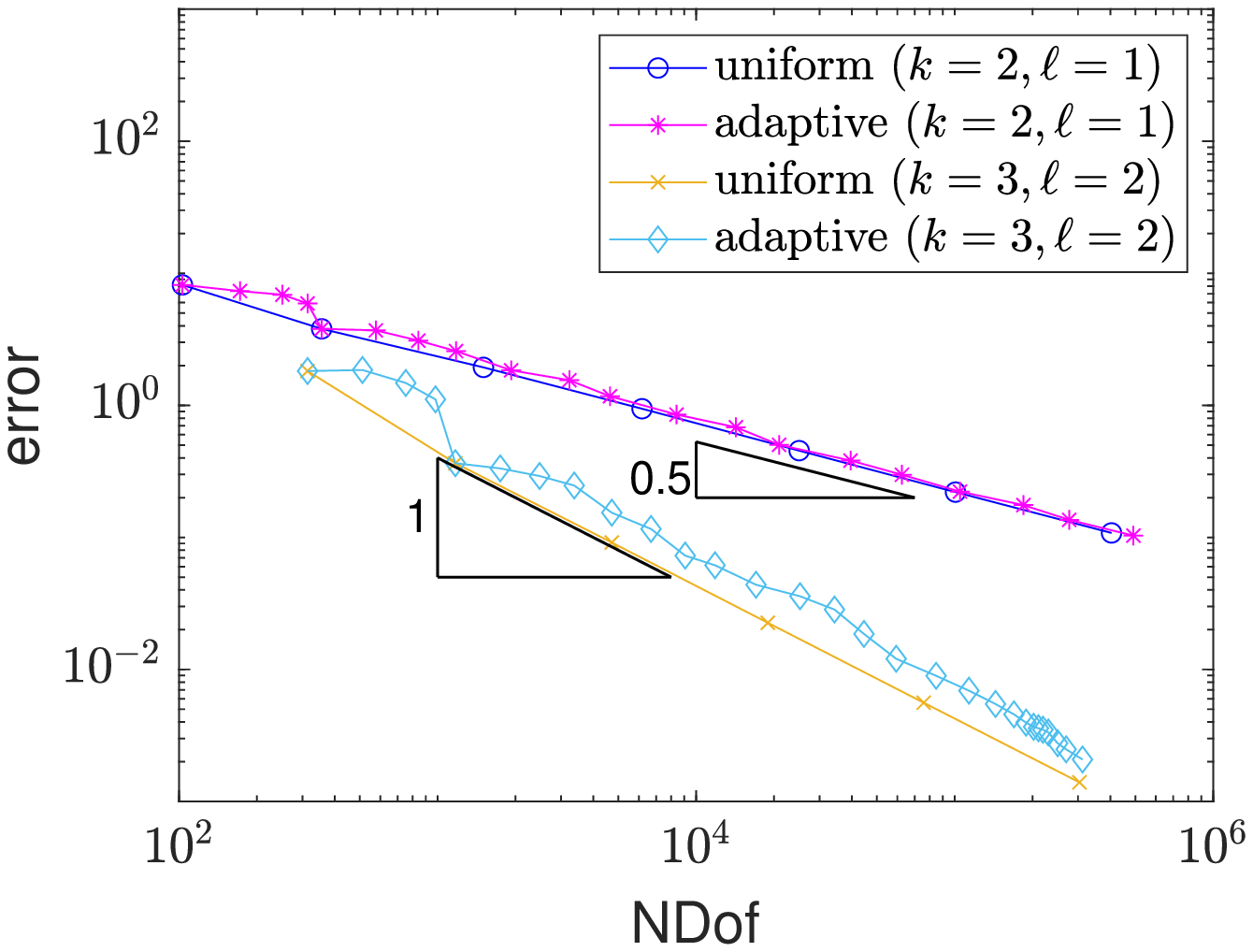}
		\end{subfigure}%
		\begin{subfigure}{.5\textwidth}
			\centering
			\includegraphics[width=0.95\linewidth]{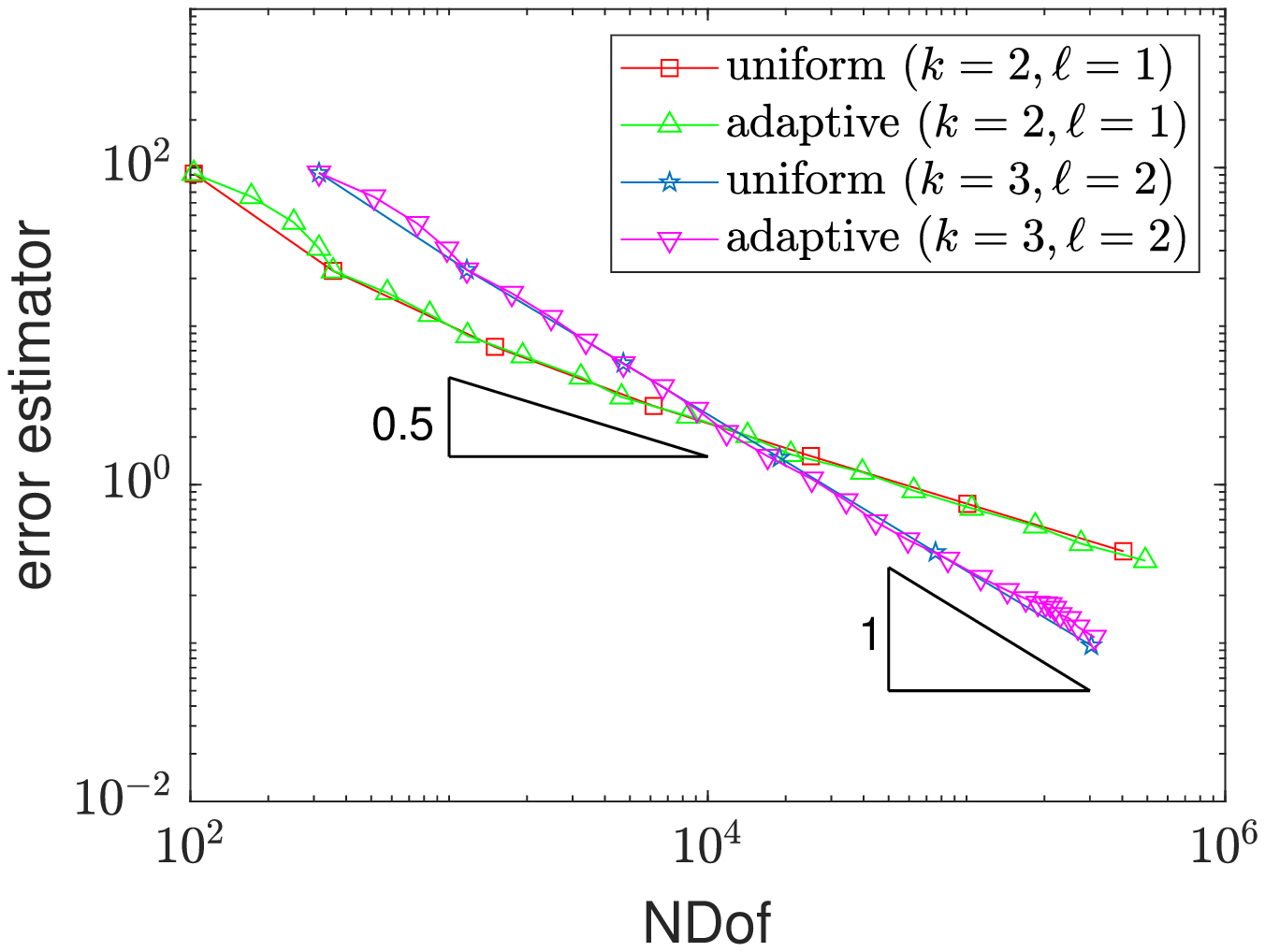}
		\end{subfigure}
		\caption{Left (resp. right) panel displays NDof vs error in energy norm (resp. error estimator) in both uniform and adaptive refinements for conforming VEM.}
		\label{fig1:ErrEst}
	\end{figure}
	\begin{figure}[H]
		\centering
		\begin{subfigure}{.5\textwidth}
			\centering
			\includegraphics[width=0.95\linewidth]{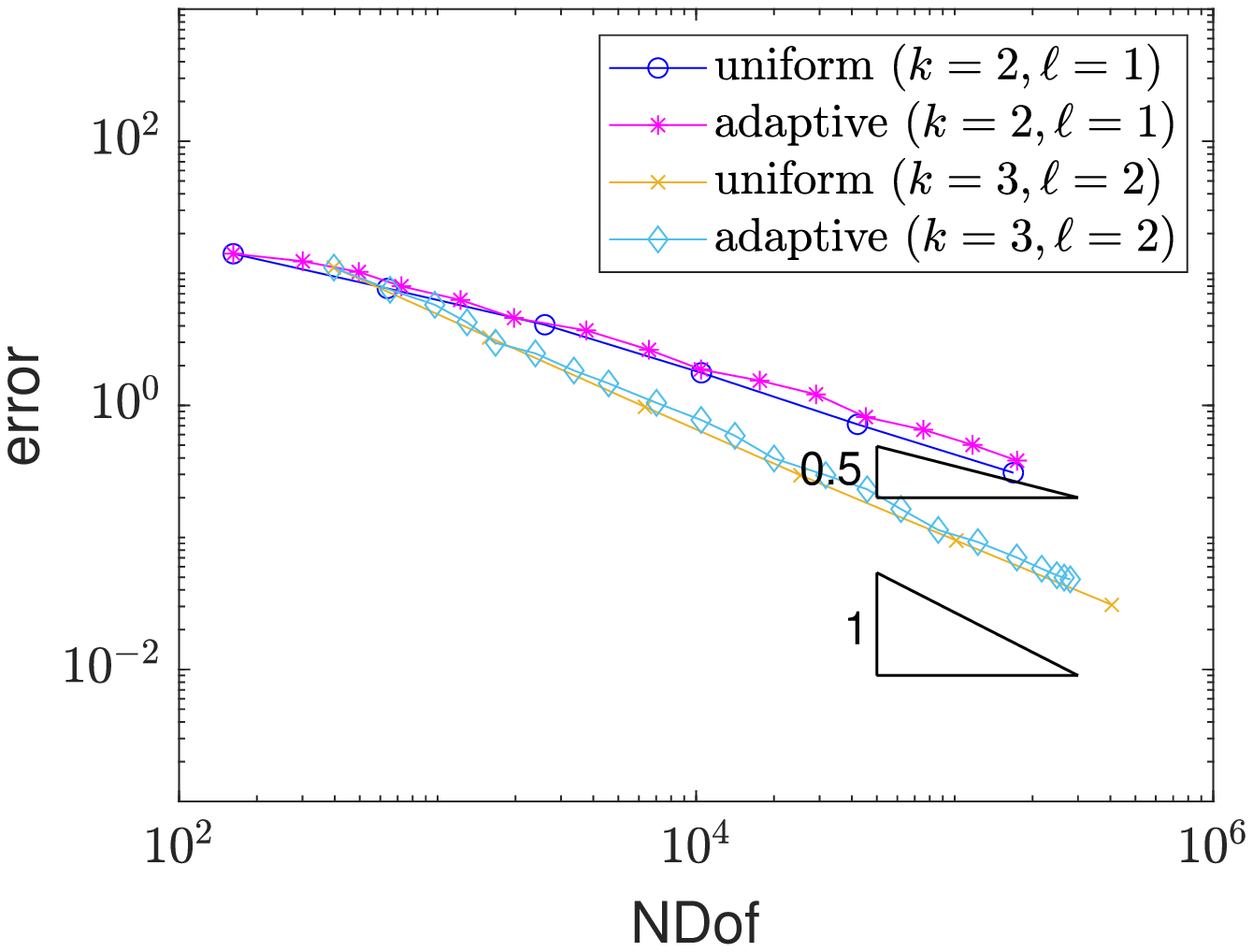}
		\end{subfigure}%
		\begin{subfigure}{.5\textwidth}
			\centering
			\includegraphics[width=0.95\linewidth]{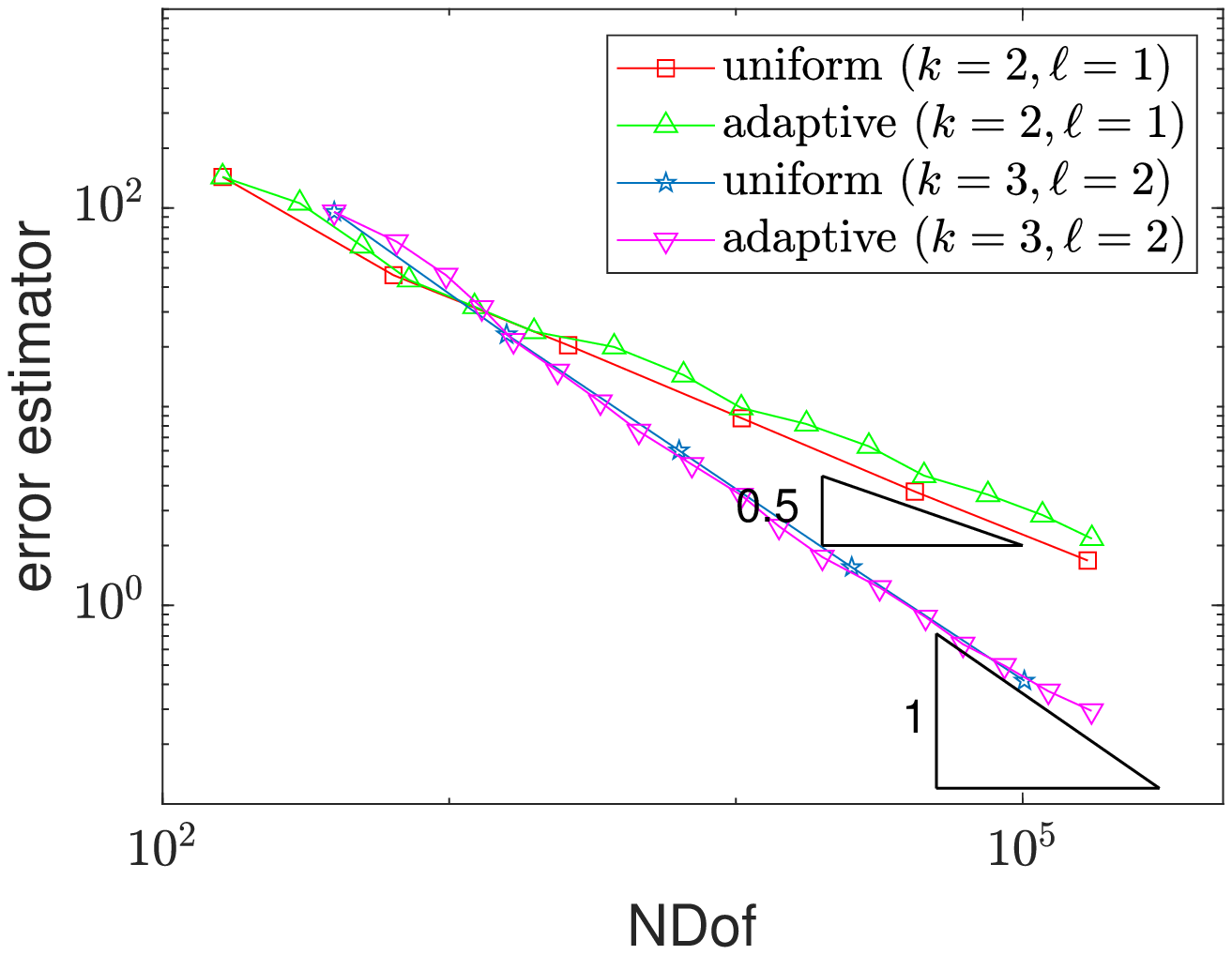}
		\end{subfigure}
		\caption{Left (resp. right) panel displays NDof vs error in energy norm (resp. error estimator) in both uniform and adaptive refinements for nonconforming VEM.}
		\label{fig1:ncErrEst}
	\end{figure}

\subsection{Example 2: Convergence rates with non-smooth solutions}
We consider the L-shaped domain $\Omega= (-1,1)^2\setminus ([0,1)\times[-1,0))$  and the exact solution
	\[u(r,\theta) = r^{5/3}\sin\Big(\frac{5\theta}{3}\Big),\quad
	p(r,\theta) = r^{2/3}\sin\Big(\frac{2\theta}{3}\Big)\]
	with clamped boundary conditions for $u$ and Dirichlet boundary condition for $p$ on $\partial \Omega$ (observe that we can take Dirichlet boundary condition instead of Neumann for $p$ on $\Gamma^c$ without affecting  the well-posedness and error analysis of the model problem). Since both the displacement $u\in H^{(8/3)-\epsilon}(\Omega)$ and the pressure $p\in H^{(5/3)-\epsilon}(\Omega)$ for all $\epsilon>0$ have corner singularities,  the lowest-order scheme $k=2$ and $\ell=1$ suffices to achieve the optimal convergence rates with respect to the regularity of $u$ and $p$. 
	\par When the adaptive algorithm is run, we see more refinement around the singular corner as displayed in Figures~\ref{fig1:Ex2}-\ref{fig2:Ex2}.  Figure~\ref{fig3:Ex2} shows  that the method with unifom refinement leads to suboptimal rates whereas adaptive refinement recovers the optimal convergence rates, and the error estimator mirrors the behaviour of the actual error. We observe from the plots of error estimator components  that $\eta_7$ (resp. $\eta_8$) dominates the  remaining contributions for the case of conforming (resp. nonconforming) VEM.
	\begin{figure}[H]
		\centering
		\begin{subfigure}{.33\textwidth}
			\centering
			\includegraphics[width=1.1\linewidth]{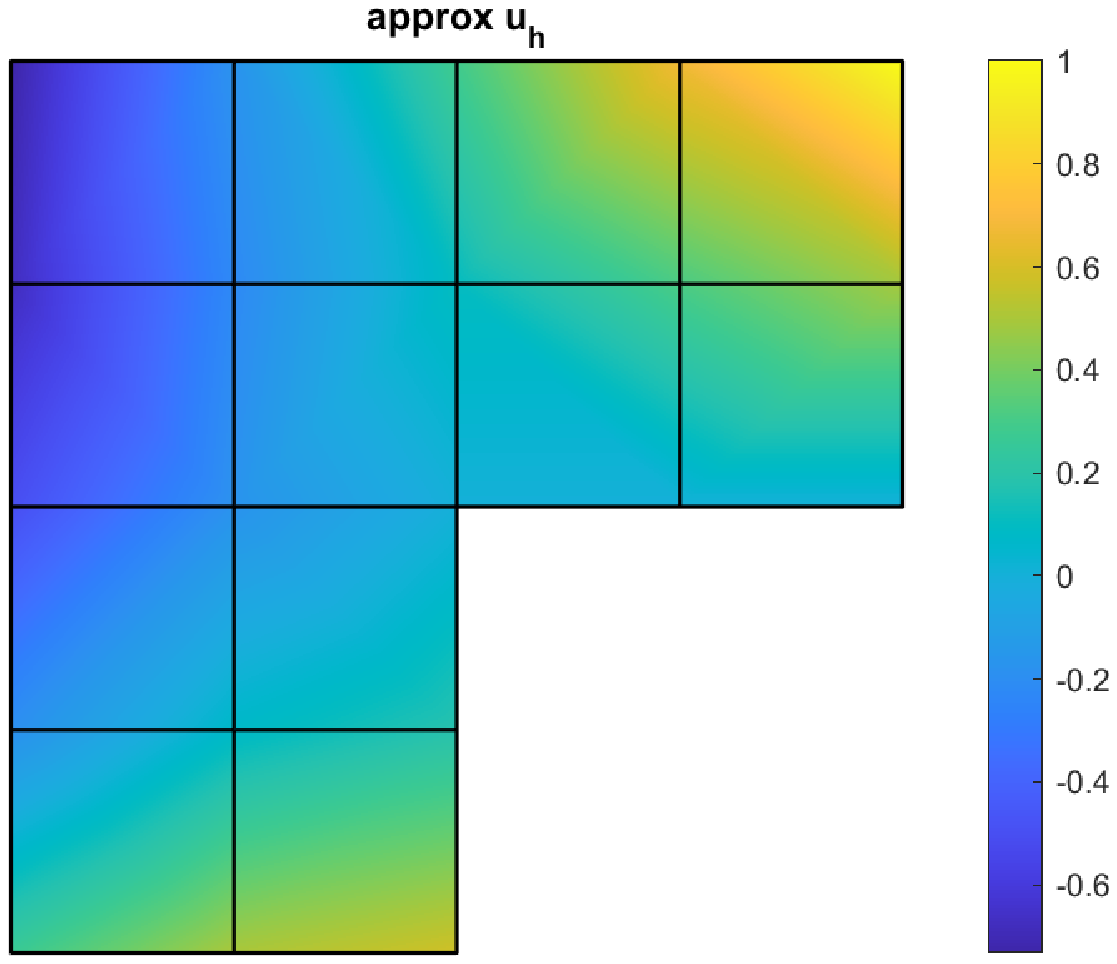}
		\end{subfigure}%
		\begin{subfigure}{.33\textwidth}
			\centering
			\includegraphics[width=1.1\linewidth]{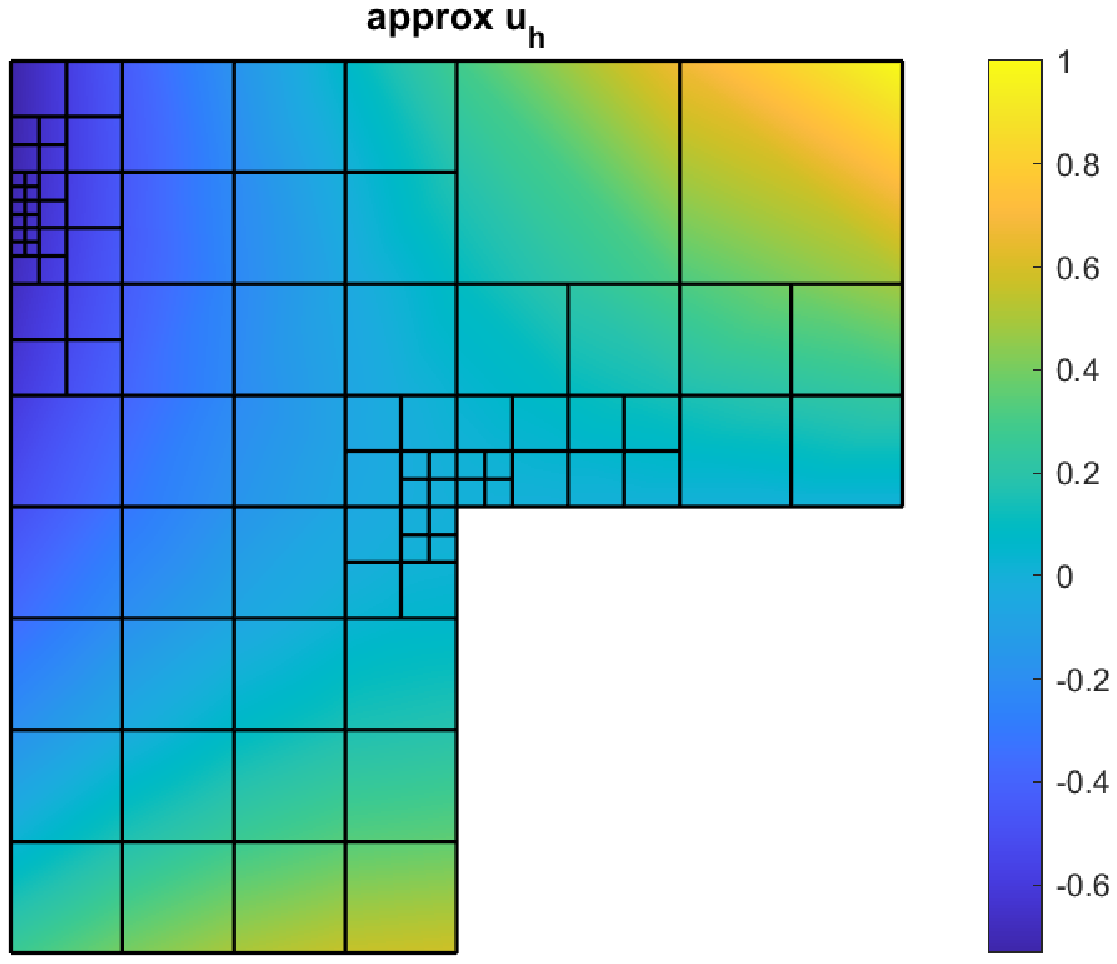}
		\end{subfigure}%
		\begin{subfigure}{.33\textwidth}
			\centering
			\includegraphics[width=1.1\linewidth]{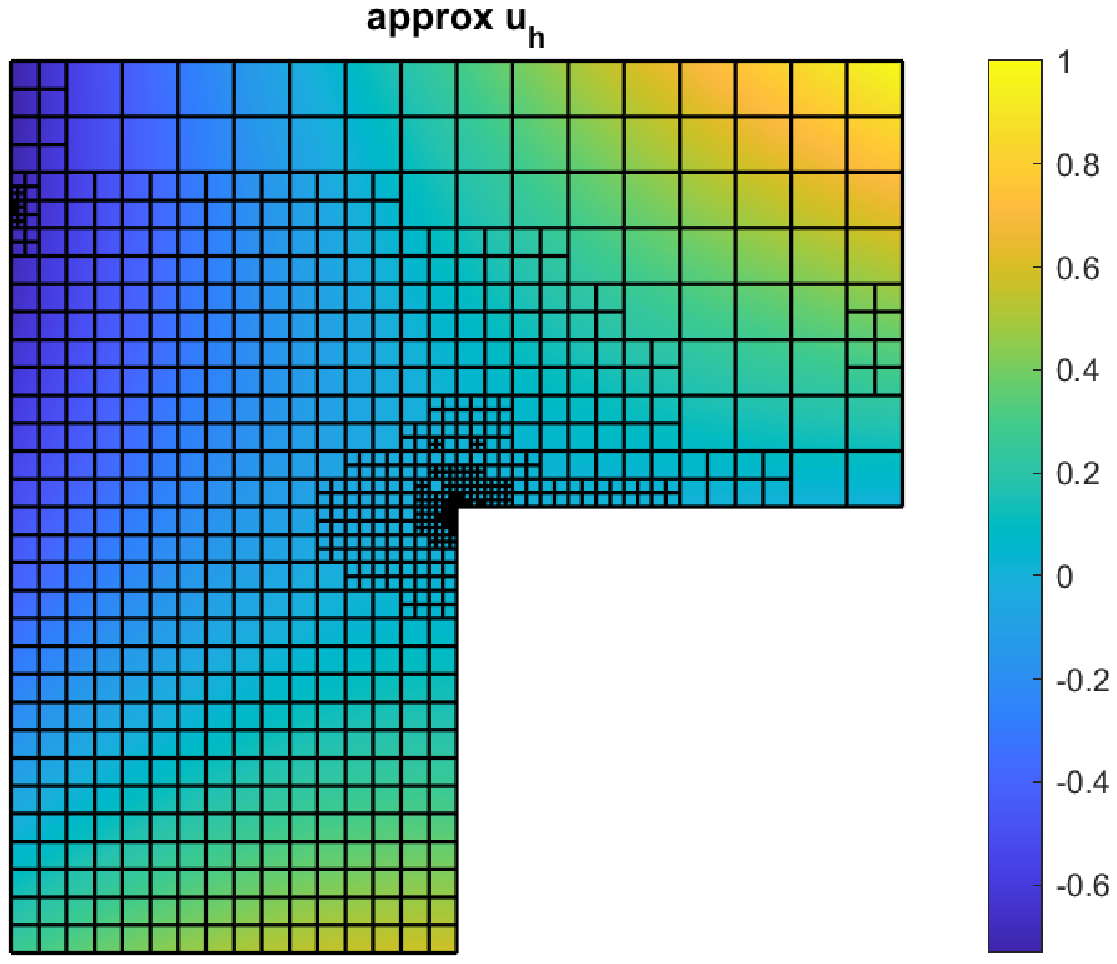}
		\end{subfigure}
		\caption{Approximation $u_h$ of displacement $u$  on adaptive meshes $\cT_1, \cT_5,\cT_{10}$.}
		\label{fig1:Ex2}
	\end{figure}
 \begin{figure}[H]
		\centering
		\begin{subfigure}{.33\textwidth}
			\centering
			\includegraphics[width=1.1\linewidth]{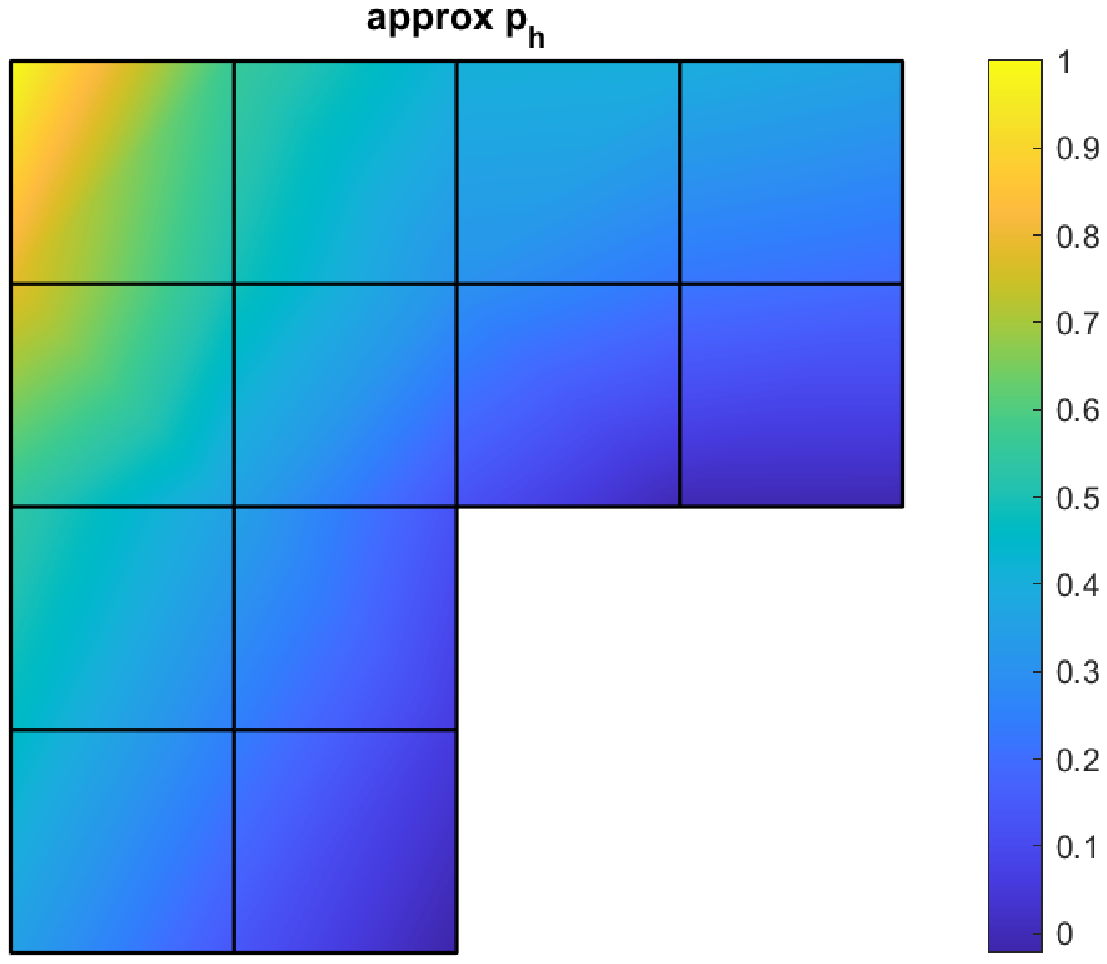}
		\end{subfigure}%
		\begin{subfigure}{.33\textwidth}
			\centering
			\includegraphics[width=1.1\linewidth]{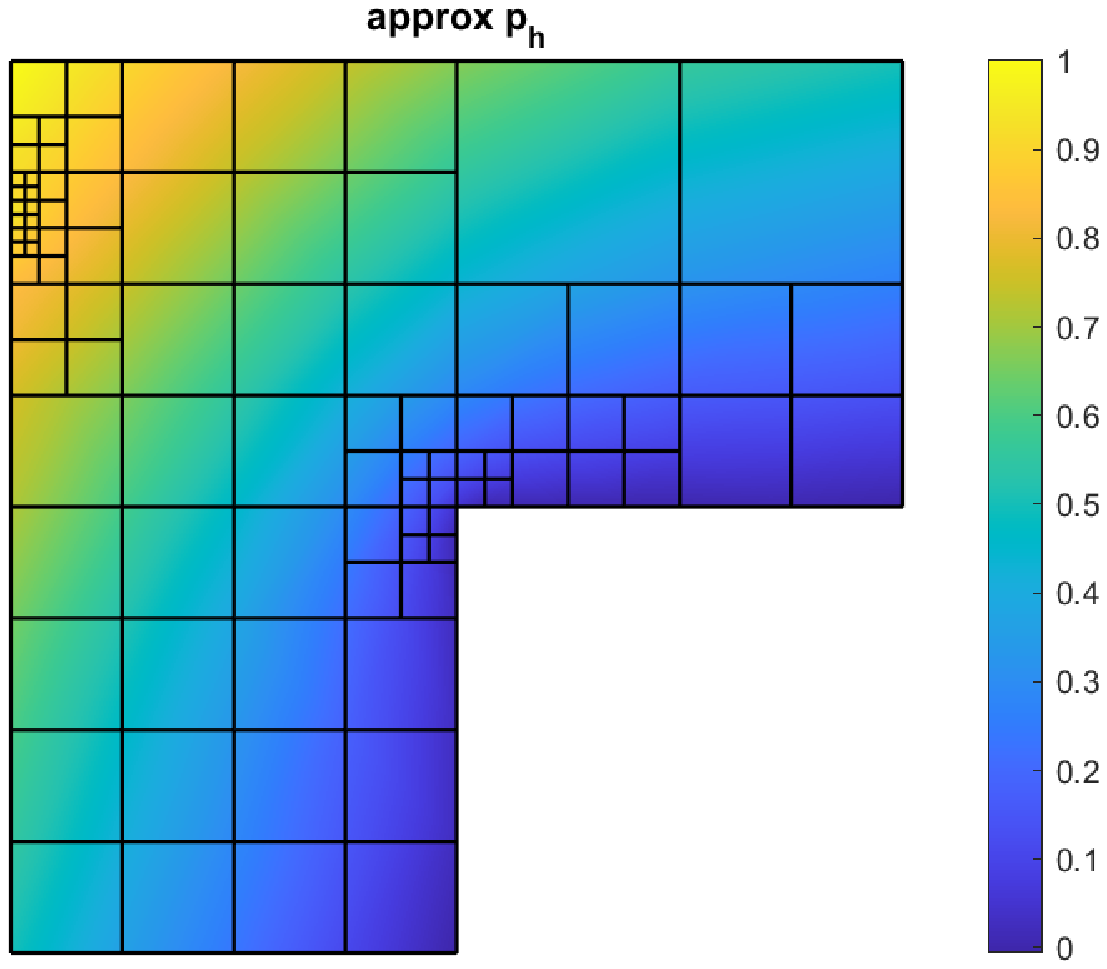}
		\end{subfigure}%
		\begin{subfigure}{.33\textwidth}
			\centering
			\includegraphics[width=1.1\linewidth]{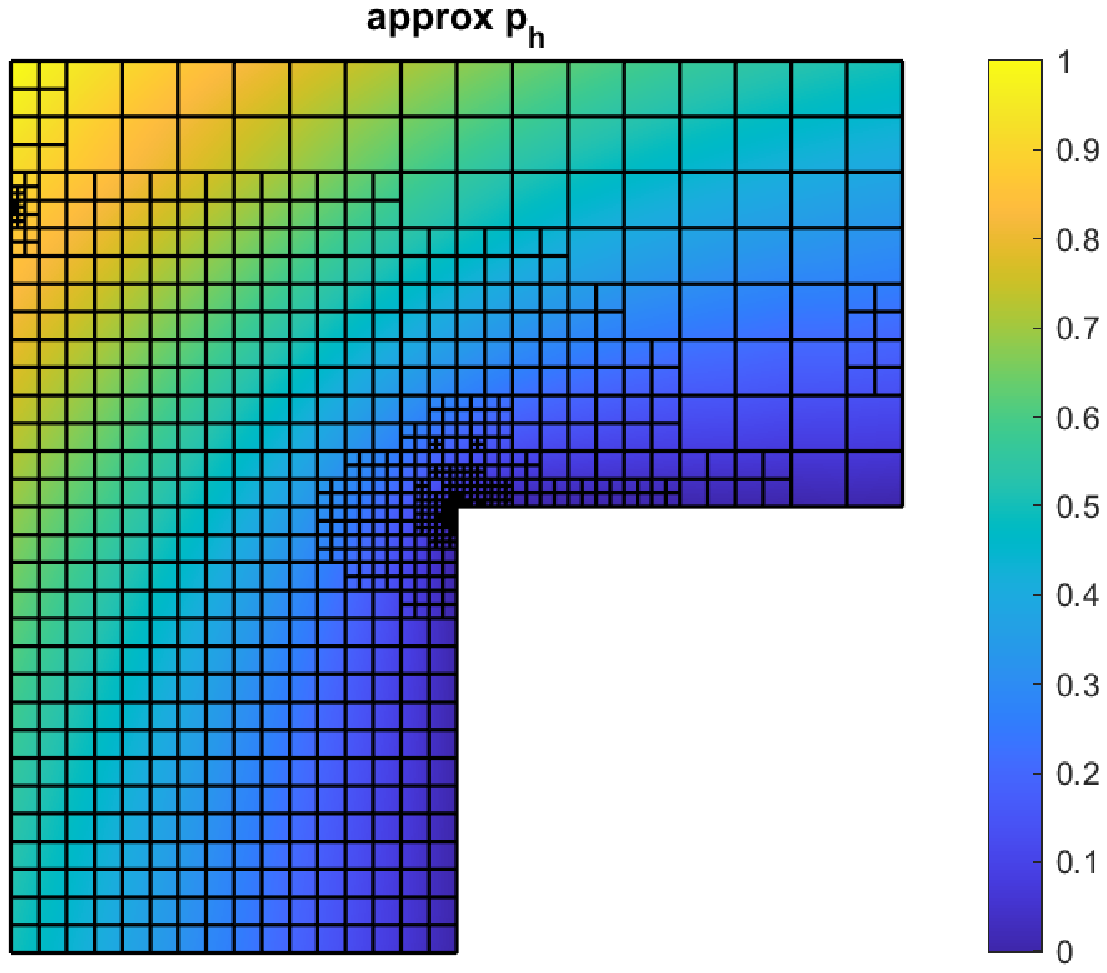}
		\end{subfigure}
		\caption{Approximation $p_h$ of pressure $p$  on adaptive meshes $\cT_1, \cT_5,\cT_{10}$.}
		\label{fig2:Ex2}
	\end{figure}
	\begin{figure}[H]
		\centering
		\begin{subfigure}{.5\textwidth}
			\centering
			\includegraphics[width=0.95\linewidth]{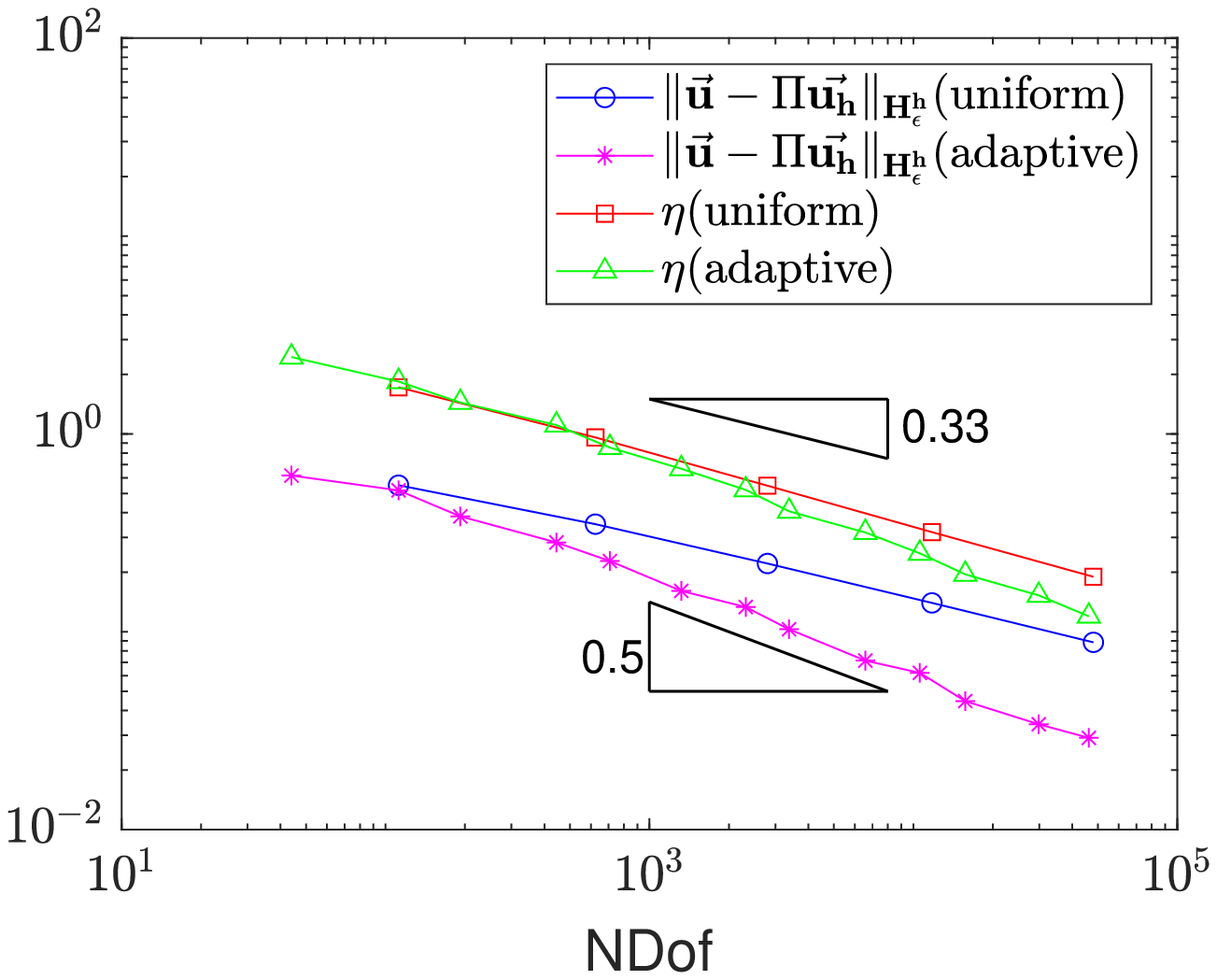}
		\end{subfigure}%
		\begin{subfigure}{.5\textwidth}
			\centering
			\includegraphics[width=0.95\linewidth]{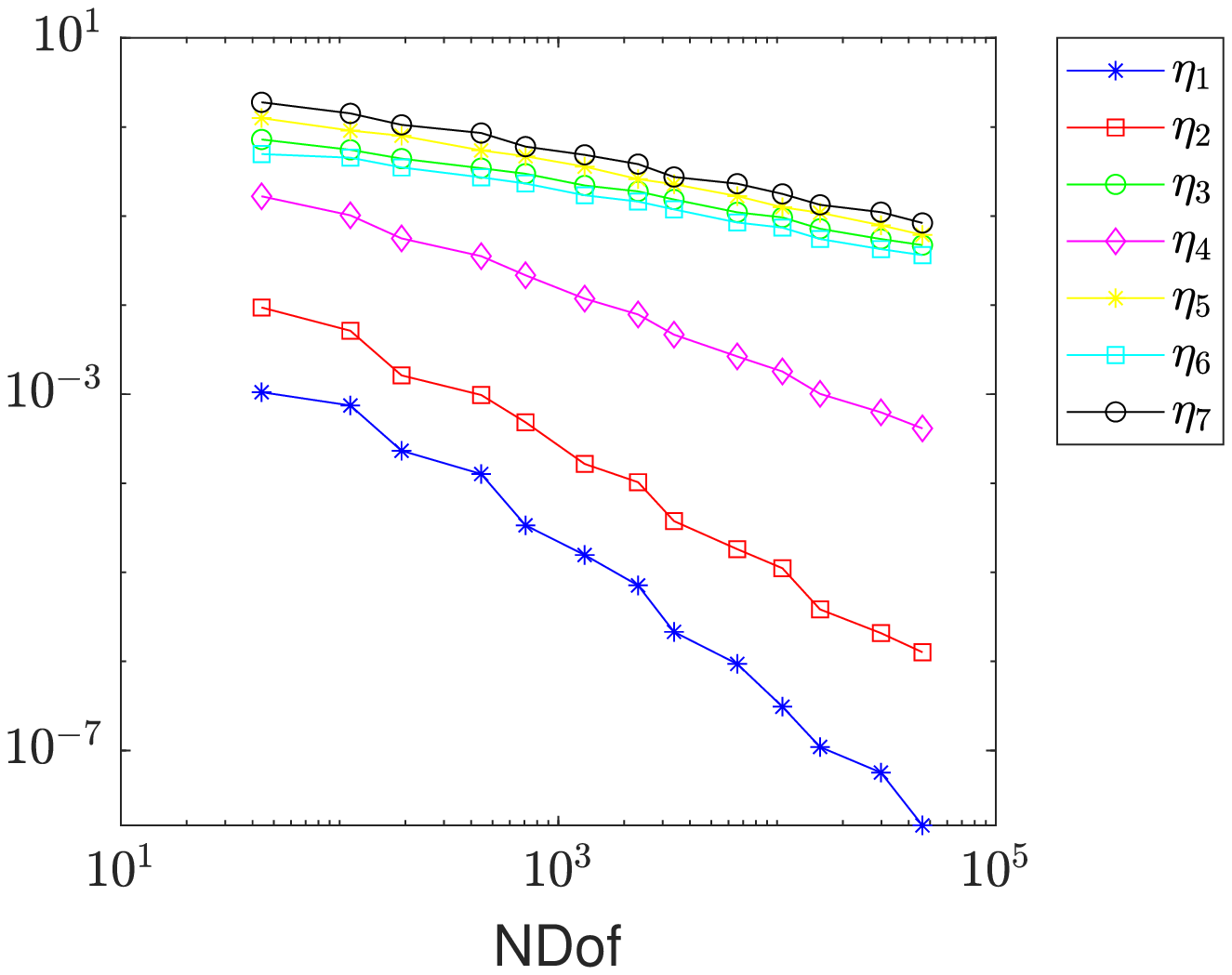}
		\end{subfigure}
		\caption{Left panel displays NDof vs error in energy norm and estimator in both uniform and adaptive refinements, and right panel displays estimator components in adaptive refinement for conforming VEM.}
		\label{fig3:Ex2}
	\end{figure}
	
	\begin{figure}[H]
		\centering
		\begin{subfigure}{.5\textwidth}
			\centering
			\includegraphics[width=0.95\linewidth]{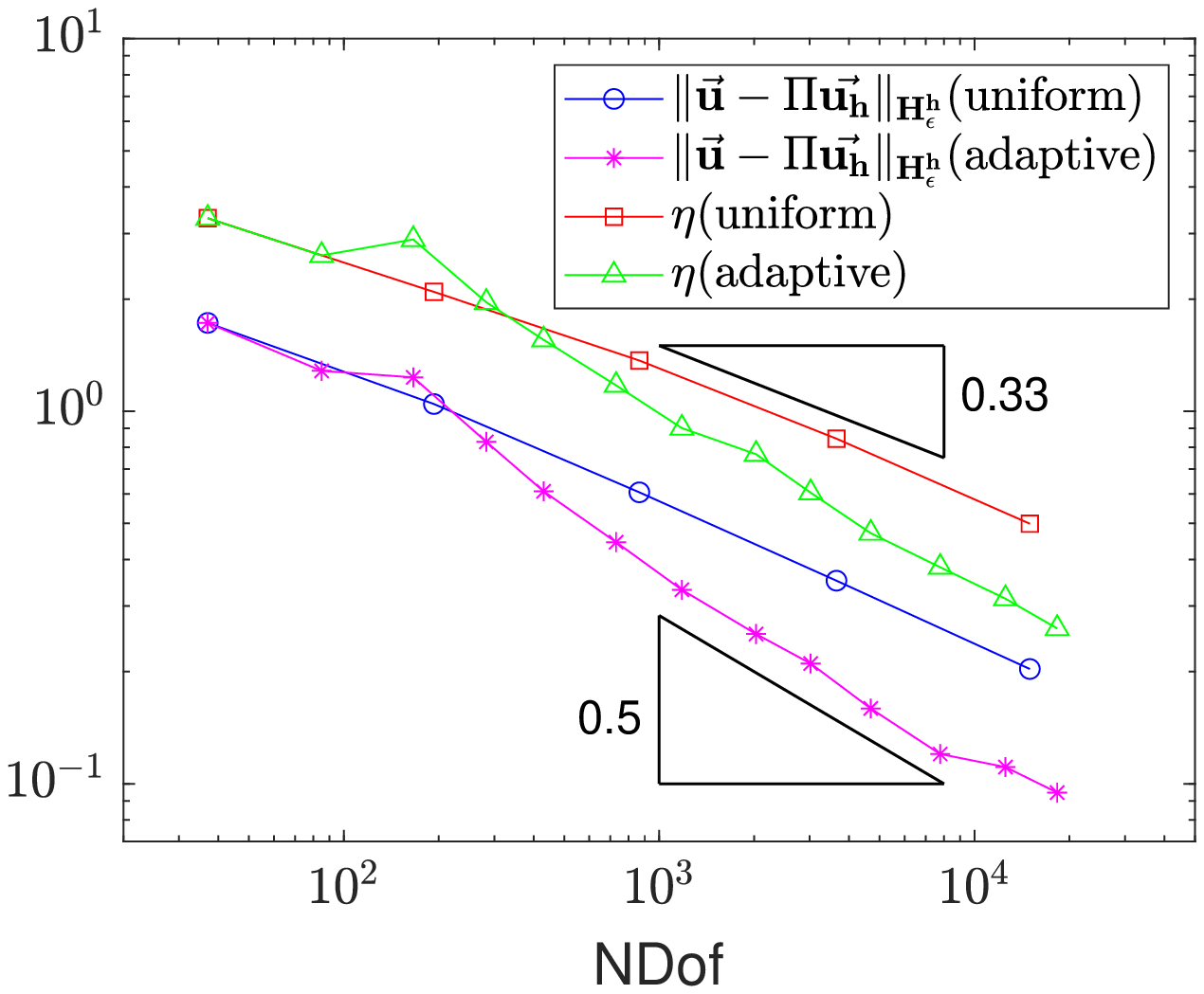}
		\end{subfigure}%
		\begin{subfigure}{.5\textwidth}
			\centering
			\includegraphics[width=0.95\linewidth]{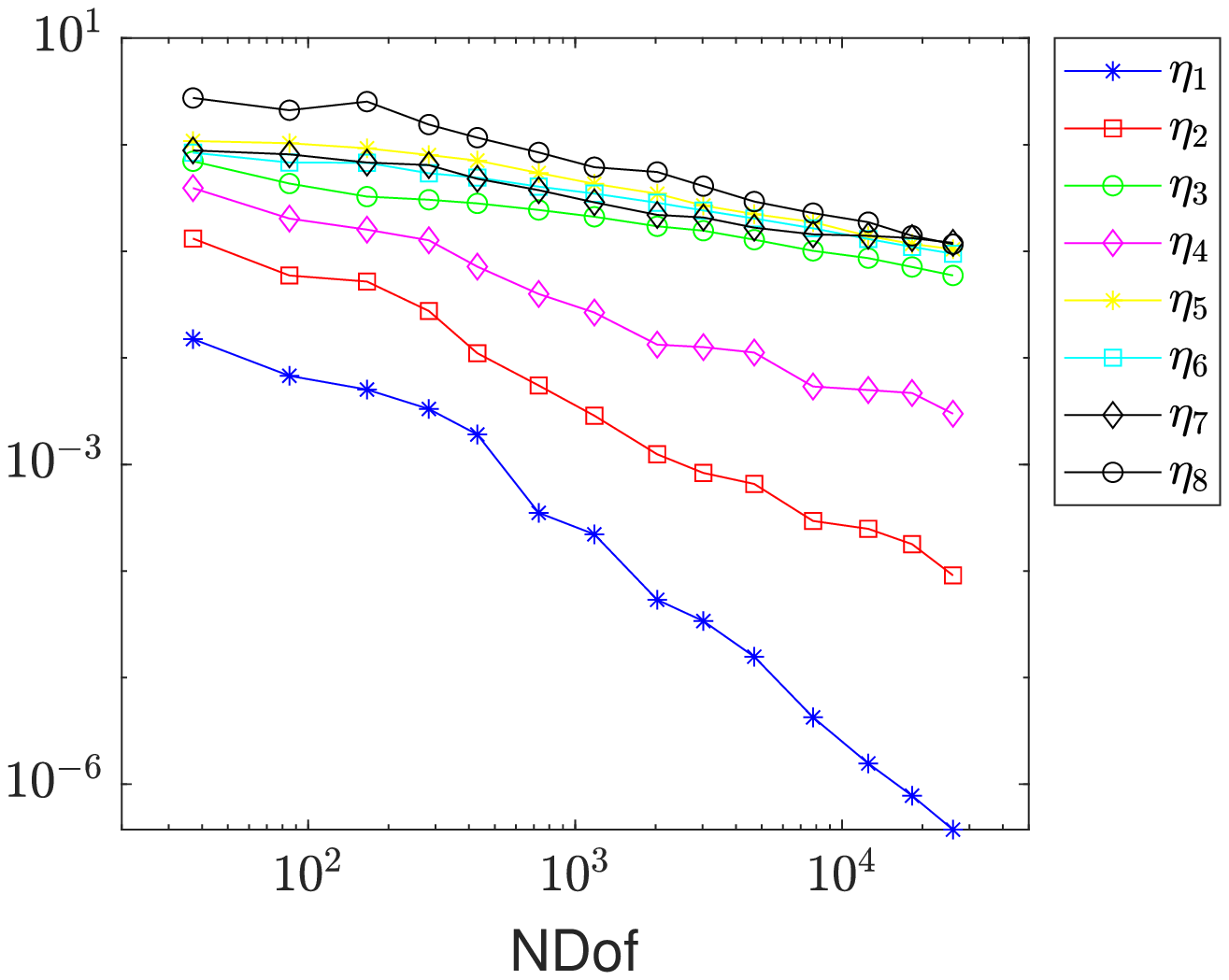}
		\end{subfigure}
		\caption{Left panel displays NDof vs error in energy norm and estimator in both uniform and adaptive refinements, and right panel displays estimator components in adaptive refinement for nonconforming VEM.}
		\label{fig4:Ex2}
	\end{figure}
\subsection*{Conclusions}
	This paper analyses  conforming and nonconforming VEMs for the poroelastic plate model \eqref{problem} referred from \cite{iliev16}. The well-posedness of both continuous and discrete problems follow straightforwardly. Theorem~\ref{theo:cv-c} provides the a priori error estimate in the best-approximation form for the conforming case. In the nonconforming case, we first developed a key tool, the so-called companion operator, which maps from the nonconforming VE space to the continuous space ($V$ for displacement space and $Q$ for pressure space). We stress that the construction of the companion operator in this paper is novel and substantially different from those already present in the literature \cite{huang21, carstensen22, carstensen22b}, in the sense that it is designed for general degree VE spaces satisfying  orthogonality and best-approximation properties. Furthermore, this operator is an independent vital technical argument that can be useful in other nonconforming VE methods for different second and fourth-order elliptic problems. The VE functions locally need not be polynomials and this paper contributes to the non-standard proofs of  basic estimates such as Poincar\'e-type inequalities, inverse estimates, and norm-equivalences. This paper also presents a residual-based reliable and efficient a posteriori error estimator, which is robust with respect to the model parameters. To support the robustness of the error analysis with respect to the model parameters, we also perform uniform and adaptive numerical tests for moderate and extreme values of model parameters  $\alpha=10^{-6}, \beta = 10^6= \gamma$ and observe from Figure~\ref{fig:scale} that although the magnitude of errors and error estimators become large for large values of $\beta$ and $\gamma$, the convergence rates remain optimal as predicted by the theory.
	\par We emphasise here that both conforming and nonconforming VEM have great advantages over standard FEM from the perspective that the $C^1$-conforming FEM  requires higher dofs compared to conforming VEM and nonconforming FEM for higher degrees can not be easily put in one framework as we can do in VEM. However since the model problem is a fourth-order PDE, we require $C^1$ continuity in the conforming case, and consequently the nonconforming VE can be implemented with less computational effort compared to its conforming counterpart.
	\par   Recall that the model problem \eqref{problem} analysed in this paper  is a scaled version of the original model problem \eqref{org-problem}. Certainly, one can start with \eqref{org-problem} instead of the normalised version, but the coercivity of $\cA$ (uniformly with respect to the parameters) is not straightforward and one may have to invoke a different abstract result (for example a global inf-sup condition or similar arguments) to prove the well-posedness of the problem. Note that our analysis (well-posedness as well as the robustness with respect to model parameters) majorly depends on having the same coefficient $\alpha$ for the coupled terms and the relation $\alpha\leq 1 \leq \gamma$. It will be interesting to track all model parameters, but the formulation is substantially modified leading to a completely different analysis and we keep it open for a future work. Also the limiting case $d\to 0$ indeed requires a change in the model to consider nonlinear effects, and altogether implying a different physical phenomenon to be regarded and studied separately. Finally, we mention that as an important extension of this work we are developing new mixed formulations for the coupling of Biot--Kirchhoff plates interacting with a bulk free flow.
	\begin{figure}[H]
		\centering
		\begin{subfigure}{.5\textwidth}
			\centering
			\includegraphics[width=0.95\linewidth]{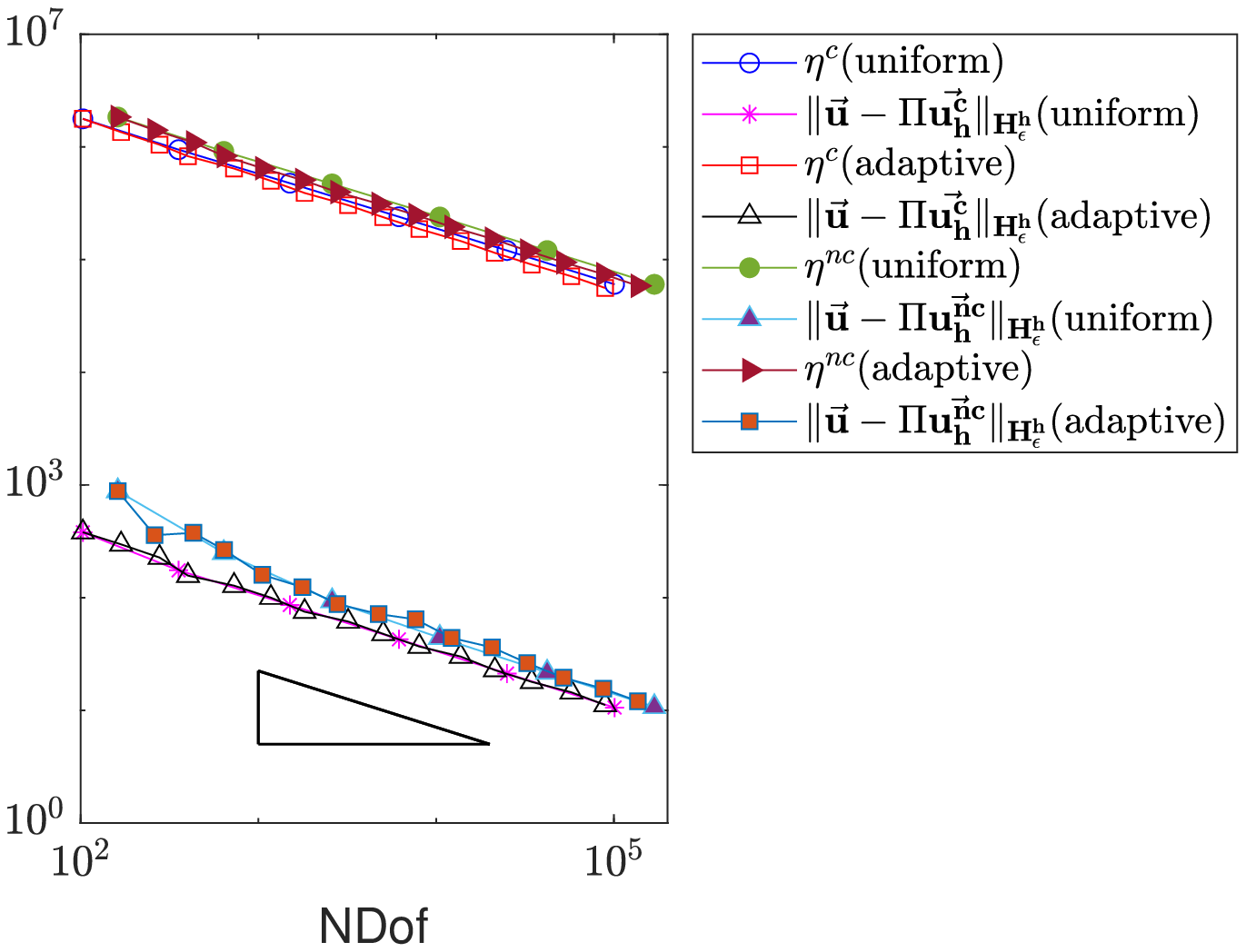}
		\end{subfigure}%
		\begin{subfigure}{.5\textwidth}
			\centering
			\includegraphics[width=0.95\linewidth]{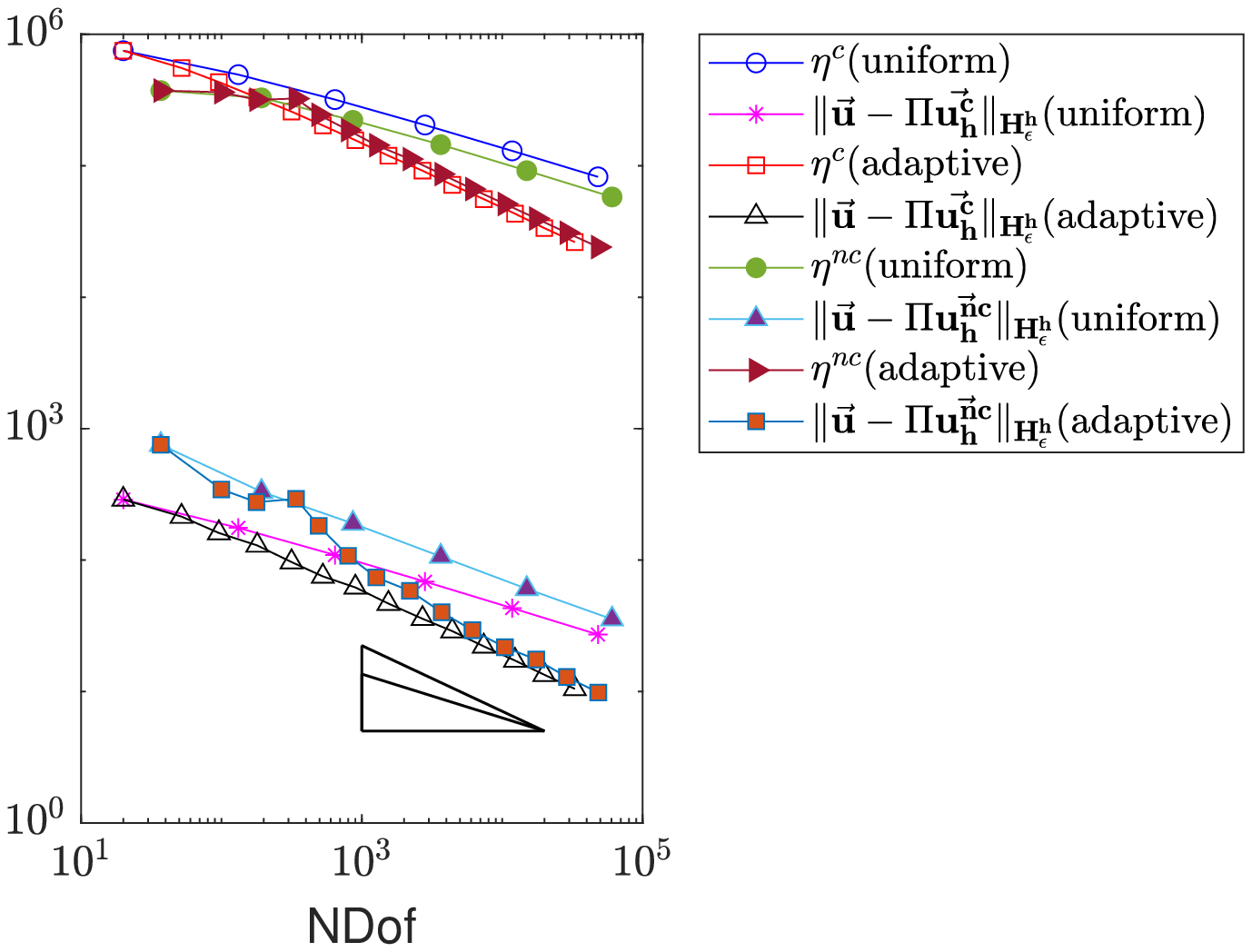}
		\end{subfigure}
		\caption{Left (resp. right) panel displays NDof vs error in energy norm and estimator in both uniform and adaptive refinements and both conforming and for nonconforming VEM in Example~1 (resp. Example~2). The superscript $c$ (resp. $nc$) denotes the respect term in the conforming (resp. nonconforming) case. The triangle in left (resp. right) panel represents slope $0.5$ (resp. $0.33$ and $0.5$).}
		\label{fig:scale}
	\end{figure}
\bibliographystyle{amsplain}
\bibliography{kmr-bib}
\end{document}